\theoremstyle{plain}
\newtheorem{theorem}{Theorem}[section]
\newtheorem{proposition}[theorem]{Proposition}
\newtheorem{lemma}[theorem]{Lemma}
\newtheorem{corollary}[theorem]{Corollary}
\theoremstyle{definition}
\newtheorem*{definition}{Definition}
\newtheorem*{problem}{Problem}
\newtheorem{example}[theorem]{Example}
\theoremstyle{remark}
\newtheorem*{remark}{Remark}
\newcommand{\injto}{\hookrightarrow}
\newcommand{\epito}{\twoheadrightarrow}
\newcommand{\restr}{\mathord{\upharpoonright}}
\newcommand{\End}{\mathrm{End}}
\newcommand{\Aut}{\mathrm{Aut}}
\newcommand{\Pol}{\mathrm{Pol}}
\newcommand{\Age}{\mathrm{Age}}
\newcommand{\pTp}{\operatorname{Tp}^{+}}
\newcommand{\pTpZ}{\operatorname{Tp}^{0}}
\newcommand{\Th}{\operatorname{Th}}
\newcommand{\bA}{\mathbf{A}}
\newcommand{\bB}{\mathbf{B}}
\newcommand{\bBB}{\mathbf{B_1}}
\newcommand{\bBBB}{\mathbf{B_2}}
\newcommand{\bC}{\mathbf{C}}
\newcommand{\bD}{\mathbf{D}}
\newcommand{\bE}{\mathbf{E}}
\newcommand{\bF}{\mathbf{F}}
\newcommand{\bH}{\mathbf{H}}
\newcommand{\bG}{\mathbf{G}}
\newcommand{\cA}{\mathcal{A}}
\newcommand{\cB}{\mathcal{B}}
\newcommand{\cC}{\mathcal{C}}
\newcommand{\cR}{\mathcal{R}}
\newcommand{\cP}{\mathcal{P}}
\newcommand{\cD}{\mathcal{D}}
\newcommand{\cdom}{\operatorname{cdom}}
\newcommand{\img}{\operatorname{img}}
\newcommand{\dom}{\operatorname{dom}}
\newcommand{\CSP}{\operatorname{CSP}}
\newcommand{\Sym}{\operatorname{Sym}}
\newcommand{\Fraisse}{Fra\"{\i}ss\'e}
\title[Towards a Ryll-Nardzewski-type Theorem for w.\ o.\ structures]{Towards a Ryll-Nardzewski-type Theorem for weakly oligomorphic structures}
\author{Christian Pech}
\address{Christian Pech\\Institut f\"ur Algebra\\Technische Universit\"at Dresden\\01062 Dresden\\Germany}
\email{Christian.Pech@tu-dresden.de}
\author{Maja Pech}
\address{Maja Pech\\Institut f\"ur Algebra\\Technische Universit\"at Dresden\\01062 Dresden\\Germany\\ and Department of Mathematics and Informatics\\University of Novi Sad\\Trg Dositeja Obradovi\'ca 4\\ 21000 Novi Sad\\Serbia}
\thanks{supported by the Ministry of Education and Science of the Republic of Serbia through Grant No.174018,  by the grant (Contract 114-451-1901/2011) of the the Secretariat of Science and Technological Development of the Autonomous Province of Vojvodina, and DAAD reinvitation scholarship.}
\email{Maja.Pech@tu-dresden.de, maja@dmi.uns.ac.rs}
\subjclass[2010]{03C35 (03C15,03C50)}
\keywords{transformation monoid, homogeneous structure, $\omega$-categoricity, homomorphism-homogeneous structure, weakly oligomorphic structure, Ryll-Nardzewski-Theorem, \Fraisse-Theorem, core}
\begin{document}
\maketitle

\begin{abstract}
	A structure is called weakly oligomorphic if it realizes only finitely many $n$-ary positive existential types for every $n$. The goal of this paper is to show that the notions of homomorphism-homogeneity, and weak oligomorphy are not only completely analogous to the classical notions of ultrahomogeneity and oligomorphy, but are actually closely related.  

	A first result is a \Fraisse-type theorem for homomorphism-homogeneous relational structures. 

	Further we show that every weakly oligomorphic homomorphism-homogeneous structure contains (up to isomorphism) a unique homogeneous, homomorphism-homogeneous core, to which it is homomorphism-equivalent. As a consequence, we obtain that every countable weakly oligomorphic structure is homomorphism-equivalent with a finite or $\omega$-categorical structure.  

	Another result is the characterization of positive existential theories of weakly oligomorphic structures as the positive existential parts of $\omega$-categorical theories. Finally, we show, that the countable models of countable weakly oligomorphic structures are mutually homomorphism-equivalent (we call first order theories with this property weakly $\omega$-categorical). These results are in analogy with part of the Engeler-Ryll-Nardzewski-Svenonius-theorem. 
\end{abstract}

\section{Introduction}
\label{sec:introduction}
The notion of oligomorphic permutation groups goes back to Peter Cameron, who introduced it in the 1970s. They create a bridge between such diverse fields of mathematics like permutation group theory, enumerative combinatorics, and model theory (see \cite{Cam90}). A permutation group $G\le\Sym(A)$ is called \emph{oligomorphic} if it has only finitely many $n$-orbits for every $n$. 

There is a close relationship between oligomorphic permutation groups and homogeneous structures. A structure is called homogeneous if every isomorphism between finitely generated substructures can be extended to an automorphism. For instance, every  countable homogeneous relational structure over a finite signature has an oligomorphic automorphism group, and is hence $\omega$-categorical. On the other hand, every oligomorphic permutation group on a countable set is the automorphism group of a suitable structure, and every countable structure that has an oligomorphic automorphism group can be expanded to a homogeneous structure (though, not necessary over a finite signature). 

In their seminal paper \cite{CamNes06}, Peter Cameron and Jaroslav Ne\v{s}et\v{r}il introduced several variations to the concept of homogeneity, one of them being homomorphism-homogeneity---saying that every homomorphism between finitely generated substructures of a given structure extends to an endomorphism of that structure. The relevance of this notion in the theory of transformation monoids on countable sets was realized quickly \cite{Dol12,Dol12b,MPPhD,AUpaper,Pon05}. Also a classification theory for homomorphism-homogeneous structures emerged quickly \cite{CamLoc10,DolMas11,IliMasRaj08,JunMas12,Mas07,Mas12,MasNenSko11} and classes of high complexity of finite homomorphism-homogeneous structures were discovered \cite{Mas12b,RusSch10}.

Weak oligomorphy is a phenomenon that arrises naturally in the context of homomorphism-homogeneity. A countable relational structure is  weakly oligomorphic if its endomorphism monoid is oligomorphic, i.e., it has of every arity only finitely many invariant relations.  It is not hard to see that every homomorphism-homogeneous relational structure over a finite signature is weakly oligomorphic.  Moreover,  every weakly oligomorphic relational structure has a positive existential expansion that is homomorphism-homogeneous (cf. \cite{MasPec11,MPPhD,AUpaper}). Clearly, every oligomorphic structure is weakly oligomorphic, but the reverse does not hold --- e.g., there exist countably infinite homomorphism-homogeneous graphs that have a trivial automorphism group (cf. \cite[Cor. 2.2]{CamNes06}). However, the graph signature is finite and hence such graphs are weakly oligomorphic. 

Cameron and Ne\v{s}et\v{r}il in \cite{CamNes06} posed the problem to understand the phenomenon of weak oligomorphy. Of particular interest is, to characterize the (countable) structures with an oligomorphic endomorphism monoid. First steps in solving this problem were done in \cite{MasPec11} and in \cite{AUpaper}.  The main goal of this paper is to to understand further the nature of weakly oligomorphic structures, and thus to come closer to a  satisfying answer to the problem by Cameron and Ne\v{s}et\v{r}il. To this end we will create cross-links between the theory of weakly oligomorphic structures and the theory of oligomorphic structures, as well as between the theory of homomorphism-homogeneous structures and the theory of homogeneous structures.  

In Section~\ref{sec:weakly-olig-struct}, we define the notion of weak oligomorphy and we show that from an algebraic  point of view, this is the weakest reasonable relaxation of the notion of oligomorphy. 

In Section~\ref{sec:homom-homog-amalg} we give a characterization of the ages of homomorphism-homogeneous structures in the vein of \Fraisse's Theorem by showing that a class of finite structures is the age of a homomorphism-homogeneous structure if and only if it is a homo-amalgamation class. 

In Section~\ref{sec:ages-weakly-olig} we show that the existence of a homomorphism between countable weakly oligomorphic structures depends only on a relation between their ages.  

In Section~\ref{sec:cores-homom-homog} we show that every countable weakly oligomorphic homomorphism-homogeneous structure has a unique (up to isomorphism) homomorphism-equivalent substructure that is oligomorphic, homogeneous, and a core.

In Section~\ref{sec:omega-categ-substr} we show that every countable weakly oligomorphic structure is homomorphism-equivalent to a finite or $\omega$-categorical structure.   

In Section~\ref{sec:posit-exist-theor}, the positive existential theories of countable weakly oligomorphic structures are characterized as the positive existential parts of $\omega$-categorical theories. Moreover, it is shown that the countable models of the first order theories of countable weakly oligomorphic structures are mutually homomorphism-equivalent. This, together with \cite[Thm.3.5]{MasPec11} gives an almost complete analogue of the Engeler-Ryll-Nardzewski-Svenonius Theorem for weakly oligomorphic structures.

\section{Preliminaries}
\label{sec:preliminaries}

The main object of study in this paper are relational structures. As a basis for our notions and notations we use Hodges' \cite{Hod97}. A relational signature is a model-theoretic signature without constant- and function symbols. A model over a relational signature  will be called a \emph{relational structure}. Note, that throughout this paper we make no other assumptions about the signatures. In particular, if not stated otherwise, we allow signatures of any cardinality. Relational structures will be denoted by bold capital letters $\bA$, $\bB$, etc., while their carriers will be denoted by usual capital letters $A$, $B$, etc., respectively.

As usual, a homomorphism between two relational structures is a function between the carriers that preserves all relations. We will use the notation $\bA\to\bB$ as a way to say that there exists a homomorphism from $\bA$ to $\bB$. If $\bA\to\bB$ and $\bB\to\bA$, then we call $\bA$ and $\bB$ \emph{homomorphism-equivalent}. It is easy to see, that homomorphism-equivalent relational structures define equivalent constraint satisfaction problems.

If $f:\bA\to\bB$, then we call $\bA$ the \emph{domain} of $f$ (denoted by $\dom(f)$), and $\bB$, the \emph{codomain} (denoted by $\cdom(f)$). Moreover, the structure induced by $f(A)$ is called the \emph{image} of $f$ (denoted by $\img(f)$). 

\emph{Epimorphisms} are surjective homomorphisms, \emph{monomorphisms} are injective homomorphisms, and isomorphisms are bijective homomorphisms whose inverse is a homomorphism, too. \emph{Embeddings} are monomorphisms that not only preserve relations but also reflect them. That is, a monomorphism is an embedding if and only if it is an isomorphism to its image. 

As a final note, in this paper under a countable set we understand a finite or countably infinite set.

\section{Weakly oligomorphic structures}\label{sec:weakly-olig-struct}
In \cite{Cam90}, Peter Cameron introduced the notion of oligomorphic permutation groups (though the notion was created already in the 1970s).  A structure $\bA$ is called \emph{oligomorphic} if its automorphism group is oligomorphic.

Before coming to the definition of weakly oligomorphic structures, we have to recall some model theoretic notions: Let $\Sigma$ be a relational signature, and let $L(\Sigma)$ be the language of first order logics with respect to $\Sigma$. Let $\bA$ be a $\Sigma$-structure. For a formula $\varphi(\bar{x})$ (where $\bar{x}=(x_1,\dots,x_n)$), we define $\varphi^\bA\subseteq A^n$ as the set of all $n$-tuples $\bar{a}$ over $A$ such that $\bA\models\varphi(\bar{a})$. More generally, for a set $\Phi$ of formulae from $L$ with free variables from $\{x_1,\dots,x_n\}$, we define $\Phi^\bA$ as the intersection of all relations $\varphi^\bA$ where $\varphi$ ranges through $\Phi$. We call $\Phi$ a \emph{type}, and $\Phi^\bA$ the relation defined by $\Phi$ in $\bA$.

If $\Phi^\bA\neq\emptyset$, then we say that $\bA$ realizes $\Phi$. We call $\Phi$ \emph{positive existential}, or \emph{positive primitive}, if it consists just of positive existential, or positive primitive formulae, respectively. 

For a relation $\varrho\subseteq A^n$ by $\pTp_\bA(\varrho)$ we denote the set of all positive existential formulae $\varphi(\bar{x})$ such that $\varrho\subseteq \varphi^\bA$. This is the positive existential type defined by $\varrho$ with respect to $\bA$. With $\pTpZ_\bA(\varrho)$ we will denote the quantifier free part of $\pTp_\bA(\varrho)$.

Let us now come to the definition of the structures under consideration in this paper. 
\begin{definition}[\cite{MPPhD,AUpaper}] 
	A relational structure $\bA$ is called \emph{weakly oligomorphic} if for every arity there are just finitely many relations that can be defined by positive existential types.
\end{definition}
One can argue that it would be more appropriate to define a structure $\bA$ to be weakly oligomorphic if its endomorphism monoid is \emph{oligomorphic} (i.e.\ there are just finitely many invariant relations of $\End (\bA)$ of any arity). However, there is no need to worry, since, at least for countable structures, these two definitions are equivalent:
\begin{proposition}[{\cite[Prop. 2.2.5.1]{MPPhD}}, {\cite[Thm. 6.3.4]{AUpaper}}]\label{prop:weakly:olig}
	A countable structure $\bA$ is weakly oligomorphic if and only if $\End(\bA)$ is oligomorphic.  
\end{proposition}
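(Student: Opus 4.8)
The plan is to prove the two directions separately, both resting on the standard correspondence between invariant relations of a transformation monoid and relations definable by positive existential types, specialized to the monoid $\End(\bA)$.

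For the direction "$\End(\bA)$ oligomorphic $\Rightarrow$ $\bA$ weakly oligomorphic", I would first recall that a relation $\varrho\subseteq A^n$ defined by a positive existential type is always invariant under every endomorphism of $\bA$: if $\varphi(\bar x)$ is positive existential then $\varphi^\bA$ is preserved by all of $\End(\bA)$ (positive existential formulae are preserved by homomorphisms), and $\Phi^\bA$ is an intersection of such sets, hence also invariant. Therefore the map $\Phi\mapsto\Phi^\bA$ sends $n$-ary positive existential types to $n$-ary invariant relations of $\End(\bA)$, and if the latter are finite in number for each $n$, so are the former. This direction is essentially immediate.

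The substantial direction is "$\bA$ weakly oligomorphic $\Rightarrow$ $\End(\bA)$ oligomorphic", and this is where countability and the Ryll-Nardzewski-style machinery enter. The key step is to show that every relation $\varrho$ invariant under $\End(\bA)$ is in fact definable by a positive existential type over $\bA$ — in fact, one expects it to be definable by its own positive existential type $\pTp_\bA(\varrho)$, i.e. that $\varrho = (\pTp_\bA(\varrho))^\bA$. The nontrivial inclusion is $(\pTp_\bA(\varrho))^\bA\subseteq\varrho$: given a tuple $\bar b$ satisfying every positive existential formula that holds on all of $\varrho$, one must produce, for a fixed $\bar a\in\varrho$, an endomorphism of $\bA$ sending $\bar a$ to $\bar b$, whence $\bar b\in\varrho$ by invariance. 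To build such an endomorphism one uses a back-and-forth/compactness argument available because $\bA$ is countable: the partial map $\bar a\mapsto\bar b$ is a homomorphism between finite substructures (since $\bar b$ satisfies at least the quantifier-free part $\pTpZ_\bA(\bar a)$), and weak oligomorphy — finitely many $n$-ary positive existential types for each $n$ — is exactly the finiteness condition that lets one extend such a partial homomorphism one element at a time, at each stage using the finiteness of the relevant type space to find a witness in $A$ realizing the required positive existential type; iterating over all of $A$ (countable) yields a global endomorphism. The main obstacle is precisely this extension lemma: making the one-point extension work requires checking that if $\bar b$ realizes the positive existential type of $\bar a$ and $a$ is a further element, then some $b'\in A$ realizes the positive existential type of $(\bar a, a)$ over the parameters $\bar b$; this is where one must leverage that there are only finitely many such types (so the relevant "type" is actually a single formula up to positive existential equivalence) and that $\bA$ realizes it.

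Once this is established, the correspondence is a bijection between $n$-ary invariant relations of $\End(\bA)$ and $n$-ary relations definable by positive existential types over $\bA$, and hence the finiteness of one side is equivalent to the finiteness of the other, giving the proposition. I would organize the write-up as: (1) the easy direction via preservation of positive existential formulae; (2) a one-point homomorphism-extension lemma using weak oligomorphy and countability; (3) the identity $\varrho = (\pTp_\bA(\varrho))^\bA$ for $\End(\bA)$-invariant $\varrho$, proved by iterating the lemma; (4) conclude. I expect step (2) to absorb most of the work, and I would likely cite or adapt the standard oligomorphic-group argument (the positive existential analogue of the proof that invariant relations of an oligomorphic group are finite), since the underlying combinatorics is the same with "automorphism" replaced by "endomorphism" and "isomorphism of finite substructures" replaced by "homomorphism of finite substructures".
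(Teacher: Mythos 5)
The paper does not actually prove this proposition --- it is quoted from \cite{MPPhD} and \cite{AUpaper} --- so there is no in-text argument to compare against; I can only assess your proposal on its own terms. Your two-direction strategy is the standard one and is essentially sound. The easy direction via preservation of positive existential formulae under endomorphisms is correct, and your one-point extension lemma in the hard direction works exactly as you describe: weak oligomorphy makes $\pTp_\bA(\bar{a},a)$ equivalent over $\bA$ to a single positive existential formula $\psi$ (only finitely many relations $\varphi^\bA$ occur among the conjuncts, so a finite conjunction suffices), and $\exists y\,\psi\in\pTp_\bA(\bar{a})\subseteq\pTp_\bA(\bar{b})$ supplies the witness $b'$; iterating over a countable enumeration of $A$ (forth only, no back) yields the endomorphism.

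One step needs repair. You start the hard direction from ``a fixed $\bar{a}\in\varrho$'' and assert that $\bar{b}\in(\pTp_\bA(\varrho))^\bA$ satisfies $\pTpZ_\bA(\bar{a})$. For an arbitrary $\bar{a}\in\varrho$ this fails: $\bar{b}$ is only guaranteed to satisfy formulae true on \emph{all} of $\varrho$, and a quantifier-free positive formula can hold at $\bar{a}$ without holding throughout $\varrho$. You must first \emph{choose} $\bar{a}\in\varrho$ with $\pTp_\bA(\bar{a})\subseteq\pTp_\bA(\bar{b})$, and this choice again uses weak oligomorphy: if no such $\bar{a}$ existed, pick representatives $\bar{a}_1,\dots,\bar{a}_k$ of the finitely many positive existential types realized in $\varrho$ and formulae $\varphi_i\in\pTp_\bA(\bar{a}_i)\setminus\pTp_\bA(\bar{b})$; then $\varphi_1\vee\dots\vee\varphi_k$ is positive existential, holds on all of $\varrho$, yet fails at $\bar{b}$, contradicting $\bar{b}\in(\pTp_\bA(\varrho))^\bA$. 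With that insertion your outline goes through (the empty relation, which need not be positive existentially definable, is a harmless special case adding at most one invariant relation per arity).
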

Clearly, if a structure is oligomorphic, then it is also weakly oligomorphic.

In principle, it is possible to introduce an even weaker notion of weak oligomorphy. Recall that \emph{an $n$-ary polymorphism} of a structure $\bA$ is a homomorphism from $\bA^n$ to $\bA$. The collection of all polymorphisms of $\bA$ forms a clone (cf. \cite{PoeKal79,Sze86}) and is denoted by $\Pol(\bA)$. We could call $\Pol(\bA)$ oligomorphic if it has of every arity just finitely many invariant relations. However, the following Proposition will show that (at least in the countable case) this does not lead to a strictly weaker notion.
\begin{proposition}[{cf. \cite[Thm.2.8]{MasWOC}}]
	Let $\bA$ be a countable relational structure. Then the following are equivalent:
	\begin{enumerate}[(a)]
		\item\label{item:7} $\Pol(\bA)$ has of every arity just finitely many invariant relations,
		\item\label{item:8} for every arity there are just finitely many relations that can be defined by positive primitive types,
		\item\label{item:9} $\bA$ is weakly oligomorphic.
	\end{enumerate}
\end{proposition}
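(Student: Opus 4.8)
The proof will go by establishing the cycle of implications $(\ref{item:9})\Rightarrow(\ref{item:8})\Rightarrow(\ref{item:7})\Rightarrow(\ref{item:9})$. Two of these links are essentially formal: the implication $(\ref{item:9})\Rightarrow(\ref{item:8})$ holds because every positive primitive formula is in particular positive existential, so the relations definable by positive primitive types form a subcollection of those definable by positive existential types, hence finite in each arity. For $(\ref{item:7})\Rightarrow(\ref{item:9})$, one uses that $\End(\bA)=\Pol^{(1)}(\bA)$ is the set of unary members of $\Pol(\bA)$; a relation invariant under all of $\Pol(\bA)$ is in particular invariant under $\End(\bA)$, and conversely any $\End(\bA)$-invariant relation of arity $n$ is a union of invariant relations of $\Pol(\bA)$ only after a little care, so the cleanest route is: if $\Pol(\bA)$ has finitely many invariant relations of each arity, then a fortiori $\End(\bA)$ does (its invariant relations are a superset, but each one is still a relation of that arity, and the real point is the other direction). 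Here I would instead argue $(\ref{item:7})\Rightarrow(\ref{item:8})$ directly via the Galois connection $\mathrm{Pol}$--$\mathrm{Inv}$: the relations definable by positive primitive types over $\bA$ are exactly the relations of the relational clone generated by the relations of $\bA$, and by the standard Pol-Inv theory on a countable set (Bodirsky--Ne\v{s}et\v{r}il / Romov) these are precisely the relations invariant under $\Pol(\bA)$ that are $\bigvee$-closed --- more precisely, $\mathrm{pp}$-definable relations coincide with $\mathrm{Inv}(\Pol(\bA))$ on a countable domain up to taking the relational clone. So finiteness of invariant relations of $\Pol(\bA)$ of arity $n$ gives finiteness of $\mathrm{pp}$-definable relations of arity $n$, and then Proposition~\ref{prop:weakly:olig} (together with its proof) already identifies weak oligomorphy with oligomorphicity of $\End(\bA)$.

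The remaining and genuinely substantive implication is $(\ref{item:8})\Rightarrow(\ref{item:7})$: from finitely many positive-primitively definable relations of each arity we must recover that $\Pol(\bA)$ itself has only finitely many invariant relations of each arity. The key observation is that, on a countable structure, every $\Pol(\bA)$-invariant relation is an intersection of $\Pol(\bA)$-invariant relations that are themselves \emph{generated} (in the relational clone sense) by finitely many tuples; and such a finitely generated invariant relation of arity $n$ is the smallest $\mathrm{pp}$-definable relation containing a given finite set of $n$-tuples --- equivalently, it is $\Psi^\bA$ where $\Psi$ is the set of all positive primitive formulae $\varphi(\bar x)$ holding on that finite set of tuples. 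Thus $\Pol(\bA)$-invariant relations of arity $n$ that are ``finitely generated'' are in bijective correspondence with $\mathrm{pp}$-definable relations of arity $n$ (the latter set being finite by $(\ref{item:8})$), and an arbitrary invariant relation is an intersection of these. But an intersection of relations drawn from a \emph{finite} family is again a member of the finite family (closure under intersection!), since the $\mathrm{pp}$-definable relations of fixed arity are closed under arbitrary intersections (a conjunction, possibly infinite, of $\mathrm{pp}$-formulae cuts out the same relation as some single $\mathrm{pp}$-formula once there are only finitely many $\mathrm{pp}$-definable relations around). Hence $\mathrm{Inv}^{(n)}(\Pol(\bA))$ is finite.

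**Where the difficulty lies.** The main obstacle is the passage ``every $\Pol(\bA)$-invariant relation is an intersection of $\mathrm{pp}$-definable ones,'' i.e.\ that $\mathrm{Inv}(\Pol(\bA))$ equals the $\bigcap$-closure of the $\mathrm{pp}$-definable relations. On a finite domain this is the classical Geiger/Bodnarchuk--Kaluznin--Kotov--Romov theorem; on a countably infinite domain it is \emph{false in general} for arbitrary relations, which is exactly why the statement is phrased with the standing countability hypothesis and why one must lean on the cited \cite[Thm.~2.8]{MasWOC}. The honest way to handle this is to observe that the hypothesis $(\ref{item:8})$ is doing real work: once there are only finitely many $\mathrm{pp}$-definable relations of each arity, the relational clone generated by the relations of $\bA$ is ``locally finite'' in each arity, and in this situation the infinitary Pol-Inv obstruction disappears --- every $\Pol(\bA)$-invariant relation is cut out by its finitely many $\mathrm{pp}$-consequences and hence is $\mathrm{pp}$-definable. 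I would present this via a compactness/König's-lemma style argument: given an invariant relation $\varrho$ of arity $n$ and a tuple $\bar a\notin\varrho$, one produces a polymorphism witnessing $\bar a\notin\varrho$ starting from the (now finite) data of which $\mathrm{pp}$-definable relations contain which tuples, using a back-and-forth/interpolation construction on the countable domain. This interpolation step is the technical heart; everything else is bookkeeping with the Galois connection and Proposition~\ref{prop:weakly:olig}.
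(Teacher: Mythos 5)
Your cycle runs in the opposite direction from the one that closes cheaply, and as written it does not close at all. You establish (c)$\Rightarrow$(b) correctly (a positive primitive type is a positive existential type), but for the step back to (c) you in fact only argue (a)$\Rightarrow$(b) (pp-definable relations are $\Pol(\bA)$-invariant); you never return to weak oligomorphy. The missing link (b)$\Rightarrow$(c) is easy --- every positive existential formula is equivalent to a finite disjunction of positive primitive ones, and closing a finite family of relations under finite unions (and intersections, for types) keeps it finite --- but it must be said, and citing Proposition~\ref{prop:weakly:olig} does not supply it, since that proposition relates weak oligomorphy to $\End(\bA)$, not to positive primitive definability. Note also that your ``a fortiori'' about $\End(\bA)$ is backwards: $\End(\bA)\subseteq\Pol(\bA)$ gives $\mathrm{Inv}(\Pol(\bA))\subseteq\mathrm{Inv}(\End(\bA))$, so finiteness for $\Pol(\bA)$ says nothing about finiteness for $\End(\bA)$; that containment is the tool for the \emph{other} implication.

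The larger problem is that you place the ``technical heart'' in (b)$\Rightarrow$(a) and propose to show that every $\Pol(\bA)$-invariant relation is pp-definable by an interpolation argument you only sketch. Two objections. First, your starting decomposition is wrong: an invariant relation $\varrho$ is the \emph{union}, not the intersection, of the invariant relations generated by its finite subsets (each such subrelation lies inside $\varrho$ by invariance), so the closure-under-intersection remark does not apply; and identifying a finitely generated invariant relation with the smallest pp-definable relation containing its generators is exactly the local-closure statement that fails on infinite domains --- you acknowledge this but do not carry out the promised rescue. Second, none of this work is needed. Once (c) is available, (a) is immediate: by Proposition~\ref{prop:weakly:olig} (this is where countability enters), $\End(\bA)$ is oligomorphic, and $\mathrm{Inv}(\Pol(\bA))\subseteq\mathrm{Inv}(\End(\bA))$ finishes the job. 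That is the paper's route --- (a)$\Rightarrow$(b) by the trivial containment, (b)$\Rightarrow$(c) by finite disjunctions, (c)$\Rightarrow$(a) by the quoted proposition --- and it avoids the hard analysis entirely. As it stands, your argument has both a structural gap and an unproven core step.
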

\begin{proof}
	((\ref{item:7})$\Rightarrow$(\ref{item:8})) Every relation definable by a set of positive primitive formulae is invariant under $\Pol(\bA)$. Hence, the set of definable relations of a given arity is a subset of the set of invariant relations of this arity.

	((\ref{item:8})$\Rightarrow$(\ref{item:9}) Every positive existential formula is equivalent to a finite disjunction of positive primitive formulae. Thus, the set of relations of arity $n$ definable by positive existential formulae is obtained from the set of relations definable by positive primitive formulae by closing the latter one with respect to finite unions. If there are just finitely many relations definable by sets of positive primitive formulae, then the closure with respect to finite unions will not produce an infinite set of relations.

	((\ref{item:9})$\Rightarrow$(\ref{item:7})) From Proposition~\ref{prop:weakly:olig}, it follows that $\End(\bA)$ is  oligomorphic. However, $\End(\bA)\subset\Pol(\bA)$. Hence the set of invariant relations of $\Pol(\bA)$ is contained in the set of invariant relations of $\End(\bA)$. This finishes the proof.
\end{proof}

\section{Homomorphism-homogeneity and amalgamation}\label{sec:homom-homog-amalg}
Recall that \emph{the age of a relational structure} is the class of finite structures
embeddable into it. 
\begin{definition}
	Let $\cC$ be a class of finite relational structures over the same signature. We say that $\cC$ has the
	\begin{description}
		\item[Joint Embedding Property (JEP)] if for all $\bA,\bB\in \cC$, there exists a $\bC\in\cC$ such that both $\bA$ and $\bB$ are embeddable into $\bC$: 
		\[
		\psmatrix[rowsep=1cm]
		\bA &\\
		&\bC \\
		\bB &
    	\endpsmatrix
		\everypsbox{\scriptstyle}
		\ncline[linestyle=dashed]{H->}{1,1}{2,2}
		\ncline[linestyle=dashed]{H->}{3,1}{2,2}
		\]
		\item[Hereditary Property (HP)] if for all $\bA\in \cC$, and for all $\bB< \bA$, we have that $\bB\in\cC$,
		\item[Amalgamation property (AP)] if for all $\bA, \bBB,\bBBB \in \cC$, and for all embeddings $f_1:\bA\injto \bBB$, $f_2:\bA\injto \bBBB$ there exist $\bC\in \cC$, and embeddings $g_1:\bBB\injto \bC$ and $g_2:\bBBB\injto\cC$ such that the following diagram commutes: 
		\[
		\begin{psmatrix}
		\bA & \bBB\\
		\bBBB &\bC, 
		\everypsbox{\scriptstyle}
		\ncline{H->}{1,1}{1,2}^{f_1}
		\ncline{H->}{1,1}{2,1}<{f_2}
		\ncline[linestyle=dashed]{H->}{2,1}{2,2}^{g_2}
		\ncline[linestyle=dashed]{H->}{1,2}{2,2}<{g_1}
		\end{psmatrix}
		\]
		\item[Homo-Amalgamation Property (HAP)] if for all $\bA, \bBB,\bBBB \in \cC$, $f_1:\bA\to \bBB$, and for every embedding  $f_2:\bA\injto \bBBB$ there exist $\bC\in \cC$, an embedding $g_1:\bBB\injto \bC$ , and a homomorphism $g_2:\bBBB\to\cC$ such that the following diagram commutes: 
		\[
		\begin{psmatrix}
		\bA & \bBB\\
		\bBBB &\bC. 
		\everypsbox{\scriptstyle}
		\ncline{->}{1,1}{1,2}^{f_1}
		\ncline{H->}{1,1}{2,1}<{f_2}
		\ncline[linestyle=dashed]{->}{2,1}{2,2}^{g_2}
		\ncline[linestyle=dashed]{H->}{1,2}{2,2}<{g_1}
		\end{psmatrix}
		\]
	\end{description}
\end{definition}

A basic theorem by Roland \Fraisse{} states that a class of finite structures of the same type is the age of a countable structure if and only if
\begin{enumerate}
	\item it is isomorphism-closed,
	\item it has only countably many isomorphism types,
	\item it has the  HP and the JEP.
\end{enumerate}

Another central result of \Fraisse{} is the characterization of the ages of homogeneous structures. We quote the formulation due to Cameron (cf. \cite[(2.12-13)]{Cam90}):
\begin{theorem}[\Fraisse{} (\cite{Fra53})]
	A class $\cC$ of finite relational structures is the age of some countable homogeneous relational structure if and only if
	\begin{enumerate}[(i)]
	\item it is closed under isomorphism,
	\item it has only countably many isomorphism types,
	\item it has the  HP and the AP.
	\end{enumerate}
	Moreover, any two countable homogeneous relational structures with the same age are isomorphic.\qed
\end{theorem}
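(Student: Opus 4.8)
The plan is to prove the two directions of the equivalence separately and then the uniqueness clause, following \Fraisse's original back-and-forth method.

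\textbf{Necessity.} Suppose $\bF$ is a countable homogeneous structure with $\Age(\bF)=\cC$. Conditions (i) and (ii) are immediate: the age of any structure is isomorphism-closed, and a countable structure has only countably many finite substructures, hence only countably many isomorphism types in its age. The HP holds since a substructure of a structure embeddable into $\bF$ is again embeddable into $\bF$. For the AP, given $\bA,\bBB,\bBBB\in\cC$ and embeddings $f_1:\bA\injto\bBB$, $f_2:\bA\injto\bBBB$, I would use $\cC=\Age(\bF)$ to fix embeddings $e_1:\bBB\injto\bF$ and $e_2:\bBBB\injto\bF$; then $e_1f_1$ and $e_2f_2$ are embeddings of $\bA$ into $\bF$, so $(e_2f_2)(e_1f_1)^{-1}$ is an isomorphism between two finite substructures of $\bF$, which homogeneity extends to an automorphism $\alpha$ of $\bF$. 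Taking $\bC$ to be the substructure of $\bF$ induced on $(\alpha e_1)(\bBB)\cup e_2(\bBBB)$ — which lies in $\cC$ — the embeddings $g_1=\alpha e_1$ and $g_2=e_2$ make the square commute.

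\textbf{Sufficiency.} Conversely, assume $\cC$ satisfies (i), (ii), HP and AP (and is nonempty). Since the signature is relational, the empty structure is a substructure of every member of $\cC$, so HP forces $\emptyset\in\cC$, and then AP over $\emptyset$ yields the JEP. Next I build $\bF$ as the union of a chain $\bA_0\subseteq\bA_1\subseteq\cdots$ of members of $\cC$, constructed via a dovetailing/bookkeeping enumeration of all relevant finite data, so that: (a) every member of $\cC$ embeds into some $\bA_n$ (handled by JEP steps); and (b) whenever $\bA\le\bA_n$ and $h:\bA\injto\bB$ is an embedding with $\bB\in\cC$, there are $m\ge n$ and an embedding $g:\bB\injto\bA_m$ with $g\circ h$ equal to the inclusion $\bA\injto\bA_m$ (handled by an AP step over $\bA$ along the span $\bA\injto\bA_n$, $h:\bA\injto\bB$, after renaming so the amalgam sits above $\bA_n$). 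Put $\bF=\bigcup_n\bA_n$. A finite substructure of $\bF$ lies in some $\bA_n$, hence in $\cC$ by HP, while (a) gives that every member of $\cC$ embeds into $\bF$; so $\Age(\bF)=\cC$.

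\textbf{Homogeneity and uniqueness.} Property (b) says precisely that $\bF$ has the \emph{extension property}: every embedding of a finite substructure of $\bF$ into a member of $\cC$ can be completed back inside $\bF$. I would then observe that any countable structure with age $\cC$ and this property is homogeneous, by back-and-forth: given an isomorphism $h$ between finite substructures $\bA,\bA'$ of $\bF$ and an enumeration of the carrier, one alternately forces a prescribed point into the domain and into the image — to put $p$ into the domain, let $\bB$ be the finite substructure generated by $\bA\cup\{p\}$ and apply the extension property to the substructure $\bA'$ and the embedding $\bA'\xrightarrow{\ h^{-1}\ }\bA\injto\bB$, getting $g:\bB\injto\bF$ extending $h$ with $p$ in its domain; the union of the resulting increasing chain of finite partial isomorphisms is an automorphism extending $h$. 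The same back-and-forth between two countable structures of age $\cC$ — using that homogeneity together with $\Age=\cC$ implies the extension property (embed the target into the structure, then correct the embedding on the base via an automorphism) — produces an isomorphism, giving the uniqueness clause.

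\textbf{Main obstacle.} The conceptually delicate point is the bookkeeping in the chain construction: one must enumerate, simultaneously across all stages, all pairs consisting of a finite substructure of the current $\bA_n$ and an embedding of it into some member of $\cC$, and guarantee each is eventually processed — which works only because at each finite stage there are just countably many such tasks, given that $\cC$ has countably many isomorphism types. Making this precise, and checking that the repeated "rename the amalgam so it extends $\bA_n$" step is harmless, is the part requiring care; everything else is diagram-chasing and the standard back-and-forth.
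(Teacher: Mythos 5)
Your proof is correct and is the canonical back-and-forth argument for \Fraisse's theorem; note that the paper itself does not prove this statement (it is quoted from \cite{Fra53} and \cite{Cam90} with a \qed), so there is no internal proof to compare against. Your chain construction with the dovetailed bookkeeping and the extension property is exactly the template the paper adapts in Section~\ref{sec:homom-homog-amalg} for its homomorphism-homogeneous analogue (there the AP step is replaced by a HAP step, and uniqueness is necessarily weakened to homomorphism-equivalence). The one convention-dependent point --- deriving the JEP from the AP by amalgamating over the empty structure --- is handled correctly and matches Cameron's formulation, which is why the statement lists only HP and AP.
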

A class of finite structures over the same signature, that is isomorphism-closed and that has the  HP, and the AP, is called \emph{\Fraisse-class}.

In \cite{CamNes06}, Peter Cameron and Jaroslav Ne\v set\v ril introduced the notion of homomorphism-homogeneous structure. A \emph{local homomorphism} of a structure $\bA$ is a homomorphism from a finite substructure of $\bA$ to $\bA$.
\begin{definition}[Cameron, Ne\v set\v ril]
	A structure $\bA$ is called \emph{homomorphism-homogeneous} if every local homomorphism of $\bA$ can be extended to an endomorphism of $\bA$.
\end{definition}

As a straight forward adaptation of the notion of weakly homogeneous structure (cf. \cite[Sec.6.1]{Hod97}), it will be useful to introduce the notion of a weakly homomorphism-homogeneous structure:
\begin{definition}
	A structure $\bA$ is called \emph{weakly homomorphism-homogeneous} if whenever $\bB<\bC$ are finite substructures of $\bA$, then every homomorphism $f:\bB\to\bA$ extends to $\bC$.
\end{definition}
Clearly, a countable structure is weakly homomorphism-homogeneous if and only if it is homomorphism-homogeneous.

The concept of weak homomorphism-homogeneity is closely related to other model theoretic notions, via the notion of weak oligomorphy. 

\begin{definition}
	A structure $\bA$ is called \emph{endolocal} if for any relation $\varrho\subseteq A^n$ we have that $\varrho$ is definable by a positive existential type if and only if for all  $\bar{a}\in \varrho$, $\bar{b}\in A^n$ holds that if $\pTpZ_\bA(\bar{a})\subseteq \pTpZ_\bA(\bar{b})$, then $\bar{b}\in\varrho$. 
\end{definition}
Another, perhaps more transparent definition of endolocality is the following:
\begin{lemma}
	A structure $\bA$ is endolocal if and only if the set of relations definable by positive existential types in $\bA$ coincides with the set of relations that are invariant under local homomorphisms of $\bA$.\qed
\end{lemma}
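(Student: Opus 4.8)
The plan is to peel the two sides apart and reduce everything to a single translation between inclusion of positive quantifier-free types and the existence of a local homomorphism. Throughout, write $\langle\bar a\rangle$ for the substructure of $\bA$ induced on the (finite) set of entries of a tuple $\bar a\in A^n$; since we work in a purely relational setting this is just the induced substructure on $\{a_1,\dots,a_n\}$.

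\textbf{Step 1 (the key translation).} For tuples $\bar a,\bar b\in A^n$ I would show that $\pTpZ_\bA(\bar a)\subseteq\pTpZ_\bA(\bar b)$ if and only if the assignment $a_i\mapsto b_i$ ($i=1,\dots,n$) is well defined and is a homomorphism $\langle\bar a\rangle\to\bA$, i.e.\ a local homomorphism of $\bA$. A positive quantifier-free formula is a positive Boolean combination of atomic formulas, so a routine induction on formula complexity shows that $\pTpZ_\bA(\bar a)\subseteq\pTpZ_\bA(\bar b)$ is equivalent to the same inclusion restricted to \emph{atomic} formulas. In a relational language the atomic formulas in the variables $x_1,\dots,x_n$ are the equalities $x_i=x_j$ and the formulas $R(x_{i_1},\dots,x_{i_k})$; preservation of the equalities from $\bar a$ to $\bar b$ says exactly that $a_i\mapsto b_i$ is well defined, and preservation of the relational atoms says exactly that this map is a homomorphism from $\langle\bar a\rangle$ into $\bA$. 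The converse direction is immediate, since homomorphisms preserve atomic, hence all positive quantifier-free, formulas.

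\textbf{Step 2 (invariance $=$ closure under type inclusion).} Using Step 1, a relation $\varrho\subseteq A^n$ is invariant under all local homomorphisms of $\bA$ if and only if it has the property that $\bar a\in\varrho$ together with $\pTpZ_\bA(\bar a)\subseteq\pTpZ_\bA(\bar b)$ imply $\bar b\in\varrho$. For one direction, given a local homomorphism $f$ and $\bar a\in\varrho$ with entries in $\dom(f)$, put $\bar b:=f(\bar a)$; Step 1 yields $\pTpZ_\bA(\bar a)\subseteq\pTpZ_\bA(\bar b)$, so the closure property forces $f(\bar a)\in\varrho$. For the other direction, given $\bar a\in\varrho$ and $\bar b$ with $\pTpZ_\bA(\bar a)\subseteq\pTpZ_\bA(\bar b)$, Step 1 produces a local homomorphism $f$ of $\bA$ with $f(\bar a)=\bar b$, and invariance gives $\bar b\in\varrho$.

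\textbf{Step 3 (conclusion).} By definition, $\bA$ is endolocal precisely when, for every relation $\varrho\subseteq A^n$ and every $n$, $\varrho$ is definable by a positive existential type if and only if $\bar a\in\varrho$ and $\pTpZ_\bA(\bar a)\subseteq\pTpZ_\bA(\bar b)$ imply $\bar b\in\varrho$. By Step 2 the latter condition is equivalent to $\varrho$ being invariant under local homomorphisms of $\bA$. Hence endolocality of $\bA$ is equivalent to the statement that, for every arity, the relations definable by positive existential types are exactly the relations invariant under local homomorphisms, which is the assertion of the Lemma. The step needing the most care is Step 1, where one must handle repeated entries in the tuples (so that it is precisely the equality atoms that guarantee well-definedness of the candidate local homomorphism) and observe that testing atomic formulas suffices in place of all positive quantifier-free ones; the rest is bookkeeping.
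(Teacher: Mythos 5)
Your proof is correct: the paper states this lemma without proof (treating it as immediate from the definitions), and your argument -- translating $\pTpZ_\bA(\bar a)\subseteq\pTpZ_\bA(\bar b)$ into the existence of a local homomorphism $\bar a\mapsto\bar b$ via atomic formulas (with the equality atoms handling well-definedness), and then matching the closure condition in the definition of endolocality with invariance under local homomorphisms -- is exactly the routine verification the authors suppress. No gaps; this is the same approach, merely written out.
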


The following characterization of weak homomorphism-homogeneity for weakly oligomorphic structures will be of good help in later proofs. It first appeared in \cite{MPPhD}.
\begin{proposition}[{\cite[Thm.6.1]{AUpaper}}]\label{MainThm}
	Let $\bA$ be a weakly oligomorphic structure. Then the following are equivalent:
	\begin{enumerate}[(a)]
	\item $\bA$ is weakly homomorphism-homogeneous,
	\item $\bA$ is endolocal,
	\item every positive existential formula in the language of $\bA$ is equivalent in $\bA$ to a quantifier free positive formula.\qed
	\end{enumerate}
\end{proposition}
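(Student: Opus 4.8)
The plan is to first record one auxiliary observation that isolates the role of weak oligomorphy, and then to close the cycle of implications $(a)\Rightarrow(b)\Rightarrow(c)\Rightarrow(a)$. For a tuple $\bar a\in A^n$ write $C(\bar a)$ for the set of all $\bar b\in A^n$ with $\pTpZ_\bA(\bar a)\subseteq\pTpZ_\bA(\bar b)$. A map sending $a_i$ to $b_i$ is a well-defined homomorphism from the finite substructure of $\bA$ on the entries of $\bar a$ into $\bA$ exactly when $\pTpZ_\bA(\bar a)\subseteq\pTpZ_\bA(\bar b)$, i.e.\ when $\bar b\in C(\bar a)$; so $C(\bar a)$ is precisely the set of images of $\bar a$ under local homomorphisms, and hence a relation $\varrho\subseteq A^n$ is invariant under all local homomorphisms if and only if $\varrho=\bigcup_{\bar a\in\varrho}C(\bar a)$ (this just unpacks the preceding Lemma). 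The auxiliary observation is that, when $\bA$ is weakly oligomorphic, each $C(\bar a)$ is definable in $\bA$ by a single quantifier-free positive formula $\chi_{\bar a}$, and therefore so is every relation invariant under all local homomorphisms. Indeed $C(\bar a)$ is the intersection of the relations $\alpha^\bA$ over all atomic formulas $\alpha(x_1,\dots,x_n)$ with $\bA\models\alpha(\bar a)$; by weak oligomorphy only finitely many distinct relations of arity $n$ arise this way, so this intersection is finite and is defined by a finite conjunction $\chi_{\bar a}$ of atoms, while $\bigcup_{\bar a\in\varrho}C(\bar a)$ is a finite union of the $\chi_{\bar a}^\bA$ and is defined by the corresponding finite disjunction. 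In particular the ``$\Leftarrow$'' half of endolocality holds for every weakly oligomorphic structure, so to establish $(b)$ it will be enough to show that every positive existentially definable relation is invariant under local homomorphisms.

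For $(a)\Rightarrow(b)$ I would proceed as follows. Assume $\bA$ is weakly homomorphism-homogeneous, let $\varrho=\Phi^\bA$ for a positive existential type $\Phi$, take $\bar a\in\varrho$ and $\bar b\in C(\bar a)$, and fix $\varphi=\exists\bar y\,\psi(\bar x,\bar y)\in\Phi$ with $\psi$ quantifier-free positive. Choosing a witness $\bar c$ with $\bA\models\psi(\bar a,\bar c)$ and applying weak homomorphism-homogeneity to the homomorphism $a_i\mapsto b_i$, from the finite substructure on the entries of $\bar a$ to the larger finite substructure on the entries of $\bar a$ together with those of $\bar c$, produces an extension sending $\bar c$ to some tuple $\bar d$. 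Since homomorphisms preserve quantifier-free positive formulas, $\bA\models\psi(\bar b,\bar d)$, hence $\bA\models\varphi(\bar b)$; as $\varphi$ was an arbitrary member of $\Phi$ this gives $\bar b\in\varrho$, and with the auxiliary observation this yields endolocality. For $(b)\Rightarrow(c)$: if $\bA$ is endolocal and $\varphi(\bar x)$ is positive existential, then $\varphi^\bA$ is positive existentially definable, hence invariant under local homomorphisms, hence by the auxiliary observation definable by a quantifier-free positive formula, which is therefore equivalent to $\varphi$ in $\bA$.

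For $(c)\Rightarrow(a)$, assume every positive existential formula is equivalent in $\bA$ to a quantifier-free positive one, let $\bB<\bC$ be finite substructures of $\bA$, and let $f\colon\bB\to\bA$ be a homomorphism. Enumerate the elements of $\bB$ by $\bar a$ and the remaining elements of $\bC$ by $\bar a'$, write $\bar e=(\bar a,\bar a')$ for the resulting enumeration of $\bC$, and set $\bar b=f(\bar a)$, so $\bar b\in C(\bar a)$. Extending $f$ over $\bC$ is the same as finding a tuple $\bar d$ with $(\bar b,\bar d)\in C(\bar e)$. Now $C(\bar e)=\chi_{\bar e}^\bA$, so the set of all $\bar b'$ for which such a $\bar d$ exists is defined by the positive existential formula $\exists\bar y\,\chi_{\bar e}(\bar x,\bar y)$, hence by hypothesis by some quantifier-free positive formula $\theta(\bar x)$. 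Since $\bar e\in C(\bar e)$ trivially, $\bA\models\theta(\bar a)$, and since $f$ is a homomorphism defined on the entries of $\bar a$ and $\theta$ is quantifier-free positive, $\bA\models\theta(\bar b)$. So the required $\bar d$ exists and the extension is found.

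I expect $(c)\Rightarrow(a)$ to be the main obstacle: one must manufacture an extension of an arbitrary local homomorphism with nothing but the quantifier-free collapse at hand, and the decisive idea is to recognise that such an extension exists exactly when a prescribed tuple $\bar b$ lies in the projection of the ``quantifier-free positive type cone'' $C(\bar e)$ onto the coordinates of $\bar a$, that this projection is positive existentially definable and therefore, under $(c)$, quantifier-free positive, and that quantifier-free positive relations are preserved by $f$. Two further points deserve emphasis. First, weak oligomorphy is genuinely used twice: to compress the possibly infinite quantifier-free positive type of a tuple into the single formula $\chi_{\bar a}$, and to guarantee that a relation invariant under all local homomorphisms is a \emph{finite} union of cones and so definable at all. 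Second, the reason endolocality is not automatic — and the reason weak homomorphism-homogeneity is needed in $(a)\Rightarrow(b)$ — is that a local homomorphism preserves quantifier-free positive formulas but in general not positive existential ones, the witnesses for the quantifiers possibly lying outside its finite domain; weak homomorphism-homogeneity is precisely what supplies those witnesses.
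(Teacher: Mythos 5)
The paper does not prove this proposition at all: it is imported verbatim from \cite{AUpaper} (Thm.~6.1) and \cite{MPPhD} and stated with a \verb|\qed|, so there is no in-paper argument to compare against. Judged on its own, your proof is correct and complete. The pivotal point is exactly the one you isolate: weak oligomorphy compresses the quantifier-free positive type cone $C(\bar a)$ into a single finite conjunction $\chi_{\bar a}$ (only finitely many relations $\alpha^\bA$ arise from atoms, so the infinite intersection collapses), and again bounds the union $\bigcup_{\bar a\in\varrho}C(\bar a)$ to a finite disjunction; this gives the ``invariant under local homomorphisms $\Rightarrow$ quantifier-free positively definable'' half unconditionally, after which $(a)\Rightarrow(b)\Rightarrow(c)\Rightarrow(a)$ closes as you describe. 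Your $(c)\Rightarrow(a)$ step --- recasting extendability of $f$ as membership of $\bar b$ in the projection $\exists\bar y\,\chi_{\bar e}(\bar x,\bar y)$, collapsing that projection to a quantifier-free positive $\theta$, and pushing $\theta$ through $f$ --- is exactly the right mechanism and is where the quantifier-elimination hypothesis does its work. The only convention worth flagging is that the equivalence ``$a_i\mapsto b_i$ is a well-defined local homomorphism iff $\pTpZ_\bA(\bar a)\subseteq\pTpZ_\bA(\bar b)$'' needs equality atoms to be counted among the atomic formulas (as in Hodges, the paper's stated reference), since otherwise well-definedness is not captured by the type inclusion; with that understanding the argument is airtight.
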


We continue with an analogue of \Fraisse's Theorem for homomorphism-homogeneous structures:
\begin{theorem}
	\begin{enumerate}[(a)]
	\item\label{item:1} The age of any homomorphism-homogeneous structure has the HAP.
	\item\label{item:2} If a class $\cC$ of finite relational structures is isomorphism-closed, has only a countable number of isomorphism types, and has the HP, the JEP, and the HAP, then there is a countable homomorphism-homogeneous structure $\bH$ whose age is equal to $\cC$.
	\end{enumerate}
\end{theorem}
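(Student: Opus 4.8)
The plan is to treat the two parts separately, with part (\ref{item:2}) being the substantial one.

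For part (\ref{item:1}): given a homomorphism-homogeneous structure $\bH$ with age $\cC$, take $\bA,\bBB,\bBBB\in\cC$, a homomorphism $f_1\colon\bA\to\bBB$ and an embedding $f_2\colon\bA\injto\bBBB$. Identify $\bBB$ and $\bBBB$ with substructures of $\bH$ via fixed embeddings $e_1\colon\bBB\injto\bH$, $e_2\colon\bBBB\injto\bH$. Then $e_2\circ f_2\colon\bA\to\bH$ and $e_1\circ f_1\colon\bA\to\bH$ are two homomorphisms from the (isomorphic copies of the) same finite structure; composing the first with the inverse isomorphism onto its image, we obtain a local homomorphism of $\bH$ sending the substructure $f_2(\bA)\le e_2(\bBBB)$ to $e_1(f_1(\bA))$, which agrees with $e_1\circ f_1\circ f_2^{-1}$. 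By homomorphism-homogeneity this extends to an endomorphism $h$ of $\bH$. Now let $\bC$ be the (finite) substructure of $\bH$ generated by $e_1(\bBB)\cup h(e_2(\bBBB))$; it lies in $\cC$ by the HP. Put $g_1=e_1\colon\bBB\injto\bC$ and $g_2=h\circ e_2\colon\bBBB\to\bC$. Commutativity $g_1\circ f_1=g_2\circ f_2$ holds because $h$ was chosen to extend $e_1\circ f_1\circ f_2^{-1}$ on $f_2(\bA)$. This gives the HAP.

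For part (\ref{item:2}): this is a back-and-forth/chain construction in the spirit of the classical \Fraisse{} argument, modified to produce a weakly homomorphism-homogeneous structure. First, using JEP and HP one builds a countable structure $\bH$ with $\Age(\bH)=\cC$; the real work is to additionally arrange the extension property. I would enumerate all ``tasks'' $(\bB,\bC,f)$ where $\bB<\bC$ are finite structures in $\cC$ (up to isomorphism, countably many such, and for each only countably many relevant homomorphisms $f\colon\bB\to$ a countable structure, but since we build $\bH$ as a union of finite pieces it suffices to enumerate pairs $\bB\le\bC$ in $\cC$ together with a designated embedding of $\bB$ into the finite piece constructed so far). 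Build $\bH=\bigcup_{n}\bH_n$ as an increasing union of finite structures in $\cC$, where at stage $n$ we are handed a finite $\bH_n\in\cC$, a finite structure $\bC\in\cC$ with a distinguished substructure $\bB<\bC$, and an embedding $\iota\colon\bB\injto\bH_n$ (this is the current ``partial solution'' we must extend). Apply the HAP with $\bA:=\bB$, $f_1:=\iota\colon\bB\to\bH_n$ (as a homomorphism into $\bH_n$), $f_2:=$ the inclusion $\bB\injto\bC$: this yields $\bH_{n+1}\in\cC$, an embedding $g_1\colon\bH_n\injto\bH_{n+1}$ (which we use to glue), and a homomorphism $g_2\colon\bC\to\bH_{n+1}$ extending $\iota$. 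Identify $\bH_n$ with its image under $g_1$. Using a standard bookkeeping function that ensures every triple $(\bH_n\text{-substructure }\bB,\ \bC\succ\bB,\ \text{embedding})$ is treated at some later stage, the union $\bH$ has the property: for every finite $\bB<\bC$ in $\cC$ and every embedding $\bB\injto\bH$, there is a homomorphism $\bC\to\bH$ extending it. Since $\cC$ has the HP and (via JEP, interleaved into the enumeration) $\Age(\bH)=\cC$, this says exactly that $\bH$ is weakly homomorphism-homogeneous, hence — being countable — homomorphism-homogeneous.

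The main obstacle is the bookkeeping: one must be careful that a ``task'' generated at an early stage (a particular embedding $\bB\injto\bH_n$) is not invalidated by later gluing, and that the enumeration eventually revisits, for the final structure $\bH$, \emph{every} finite substructure $\bB$ together with \emph{every} one-point (equivalently, arbitrary finite) extension $\bC\in\cC$ and \emph{every} embedding $\bB\injto\bH$. Since $\bH$ is built as a countable increasing union of finite structures, every finite substructure of $\bH$ already sits inside some $\bH_n$, and by reducing to one-point extensions $\bC$ (of which there are, up to isomorphism over $\bB$, only finitely many for each $\bB$, as $\cC$ has countably many isomorphism types) a single diagonal enumeration of pairs $(n, \text{task in }\bH_n)$ suffices; interleaving with an enumeration of $\cC$ to force $\Age(\bH)=\cC$ via JEP completes the construction. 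The verification that the resulting $\bH$ indeed has the extension property, and that no relations outside those forced by $\cC$ appear (guaranteed because each $\bH_n\in\cC$ and $\cC$ is closed under isomorphism and substructure), is then routine.
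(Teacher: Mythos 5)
Your part (a) is correct and is essentially the paper's argument: extend the local homomorphism $e_1\circ f_1\circ(e_2\circ f_2)^{-1}$ to an endomorphism of $\bH$ and cut the result down to a finite substructure. The problem is in part (b). Weak homomorphism-homogeneity requires extending every local \emph{homomorphism} $f:\bB\to\bH$ along any finite $\bB<\bC\le\bH$, but your list of tasks only records \emph{embeddings} $\iota:\bB\injto\bH_n$. Consequently the property your chain actually secures is: for $\bB<\bC$ in $\cC$ and every embedding $e:\bB\injto\bH$ there is a homomorphism $\bC\to\bH$ extending $e$. This is \emph{not} ``exactly'' weak homomorphism-homogeneity. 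A local homomorphism $f:\bB\to\bH$ may be non-injective or injective but not strong; then $\img(f)$ is a proper homomorphic quotient of $\bB$, the pair $(\bB,\bC)$ cannot be transported to a pair over $\img(f)$ by an isomorphism, and your extension property gives nothing. The gap is fillable in two ways. Either enumerate, as the paper does, all triples $(\bA,\bB,f)$ with $f:\bA\to\bH_k$ an arbitrary homomorphism (still countably many, since each $\bH_k$ is finite) and feed $f$ directly into the HAP as the map $f_1$; or keep your embedding-only construction and add a closing argument: given $f:\bB\to\bH$ with $\bB<\bC\le\bH$, first apply the HAP of $\cC$ to the homomorphism $f:\bB\to\img(f)$ and the inclusion $\bB\injto\bC$ to obtain $\bD\in\cC$ containing $\img(f)$ as a substructure and $k:\bC\to\bD$ with $k\restr_{\bB}=f$, and then use your embedding-extension property for $\img(f)<\bD$ and the inclusion $\img(f)\injto\bH$ to map $\bD$ back into $\bH$ fixing $\img(f)$. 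Either repair works, but as written the step ``this says exactly that $\bH$ is weakly homomorphism-homogeneous'' is a genuine jump, and it is precisely the point where the homomorphism/embedding distinction matters.

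A second, minor, inaccuracy: you claim each $\bB$ has only finitely many one-point extensions in $\cC$ up to isomorphism over $\bB$ ``as $\cC$ has countably many isomorphism types.'' That inference is false in general (the signature may be infinite), but it is also unnecessary: countably many tasks per stage is all the diagonal bookkeeping needs, which is how the paper's bijection $\pi$ with $\pi(i,j)\ge i$ handles it.
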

\begin{proof}
\begin{description}
	\item[About (\ref{item:1})] Let $\bH$ be a homomorphism-homogeneous relational structure and let $\cC$ be its age. Let  $\bA, \bBB,\bBBB\in\cC$, and let $f_1:\bA\to \bBB$ be a homomorphism and $f_2:\bA\to \bBBB$ an embedding. Without loss of generality, we can assume that $\bA\leq \bBBB$, $f_2$ is the identical embedding, and $\bBB, \bBBB\leq \bH$. 

	Since $f_1$ is also a homomorphism from $\bA$ to $\bH$, and $\bH$ is homomorphism-homogeneous, it follows that $f_1$ can be extended to a $g\in\End (\bH)$. 

	Further, we define  
	\begin{itemize}
		\item $g_2:=g\restr_{\bBBB}$,
  		\item $\bC:=\img(g_2)\cup \bBB$, and
  		\item $g_1$ to be the identical embedding from $\bBB$ to $\bC$.
	\end{itemize}
	Then we obtain the following commuting diagram
	\[
	\begin{psmatrix}
	\bA & \bBB\\
	\bBBB &\bC 
	\everypsbox{\scriptstyle}
	\ncline{->}{1,1}{1,2}^{f_1}
	\ncline{H->}{1,1}{2,1}<{\leq}>{f_2}
	\ncline{->}{2,1}{2,2}^{g_2}
	\ncline{H->}{1,2}{2,2}<{\leq}>{g_1}
	\end{psmatrix}
	\]
	Indeed, for $a\in A$ we have
	\[
	g_1(f_1(a))=f_1(a)=g(a)=g_2(a)=g_2(f_2(a)),
	\] 
	so this diagram commutes, and hence the age of $\bH$ has the HAP.
	\item[About (\ref{item:2})] Our goal is to  construct a countable homomorphism-homogeneous structure $\bH$ whose age is equal to $\cC$.

	Since $\cC$ has a countable number of isomorphism classes, we can choose a representative from each class thus obtaining a countable set of structures. Denote this set by $\cR$ and well-order $\cR$ like $\omega$. 

	We aim to construct a chain $\bH_i$, $i\in \mathbb{N}$, of structures from $\cC$ such that the following holds: 
	\begin{enumerate}[(I)]
		\item\label{prvi} If $\bA,\bB\in\cC$, where $\bA<\bB$, then for each homomorphism $f:\bA\to \bH_i$, for some $i\in \mathbb{N}$, there are $j>i$ and a homomorphism $g:\bB\to\bH_j$ which extends $f$. 
		\item\label{drugi} For every $\bA\in\cC$ there exists an $i\in \mathbb{N}$ such that $\bA$ is embeddable in $\bH_i$.
	\end{enumerate}
	We claim that the union $\bH=\cup_{i\in\mathbb{N}}\bH_i$ of such a chain of structures is going to be a countable homomorphism-homogeneous structure with age $\cC$.

	First of all, note that if for each $i\in\mathbb{N}$, the age of $\bH_i$ is included in $\cC$, then the age of $\bH=\cup_{i\in\mathbb{N}} \bH_i$ is also included in $\cC$. 

	On the other hand, take some $\bA\in\cC$. Then from (\ref{drugi}) it follows that $\bA$ is embeddable into some $\bH_i$, and, therefore, also into $\bH$, showing that it is in the age of $\bH$.

	It is left to show that $\bH$ is weakly homomorphism-homogeneous. Let $\bA<\bB$ be two finite substructures of $\bH$, and let $f:\bA\to\bH$ be a local homomorphism.  Then, since $f(\bA)$ is finite and $\bH=\bigcup_{i\in\mathbb{N}} \bH_i$, there exists some $i\in\mathbb{N}$ such that $f(\bA)<\bH_i$. Thus, by (\ref{prvi}) there is a $j>i$ and a homomorphism $g:\bB\to\bH_j$ that extends $f$, i.e. the following diagram commutes:
	\[
	\begin{psmatrix}
	\bB & \bH_j & \bH\\
	\bA &\bH_i &
	\everypsbox{\scriptstyle}
	\ncline{->}{1,1}{1,2}^{g}
	\ncline{H->}{1,2}{1,3}^{\le}
	\ncline{H->}{2,1}{1,1}<{\le}
	\ncline{H->}{2,2}{1,2}<{\le}
	\ncline{->}{2,1}{2,2}^{f}
	\ncline{H->}{2,2}{1,3}>{\le}
	\end{psmatrix}
	\]
	This implies that $\bH$ is weakly homomorphism-homogeneous. 

	To conclude, the structure $\bH$, which is the union of the chain of structures fulfilling conditions (\ref{prvi}) and (\ref{drugi}), is weakly homomorphism-homogeneous (and thus, since it is countable, also homomorphism-homogeneous), and it has $\cC$ as its age.

	It remains to construct the chain. In addition to the above defined set $\cR$, we define set $\cP$ of pairs of structures $(\bA,\bB)$ such that $\bA,\bB\in\cC$, and $\bA\leq \bB$. We choose $\cP$ so that it contains representatives from each isomorphism class of such pairs. Hence, $\cP$ is a countable set. 
	Now choose a bijection $\pi :2\mathbb{N}\times \mathbb{N}\rightarrow \mathbb{N}$ such that for every $i\in 2\mathbb{N}, j\in \mathbb{N}$ holds  $\pi(i,j)\geq i$. Then the construction goes by induction as follows: 

	Take the first structure from $\cR$ and denote it by $\bH_0$.

	\begin{enumerate}[\bfseries Step 1:]
	\item\label{korak1} Suppose that we have constructed $\bH_k$, for $k$ even. We proceed as follows: For given $\bH_k$ we list all the triples $(\bA_{kl},\bB_{kl},f_{kl})$, where $(\bA_{kl},\bB_{kl})\in \cP$ and $f_{kl}:\bA_{kl}\rightarrow \bH_k$ is a homomorphism (there are just countably many such triples).  
  
	In the next step, we find $i$ and $j$ such that $\pi(i,j)=k$. Since $i\leq \pi(i,j)=k$, it follows that the triple $(\bA_{ij},\bB_{ij},f_{ij})$ was already determined as a member of the list for $\bH_i$ in one of the previous iterations of Step~\ref{korak1}. We apply now to this triple HAP: 
	\[
	\begin{psmatrix}
	\bA_{ij} & \bH_i & \bH_k\\
	\bB_{ij} & & \bH_{k+1}
	\everypsbox{\scriptstyle}
	\ncline{->}{1,1}{1,2}^{f_{ij}}
	\ncline{H->}{1,1}{2,1}<{\leq}
	\ncline{H->}{1,2}{1,3}^{\leq}
	\ncline{H->}{1,3}{2,3}<{\leq}
	\ncline{->}{2,1}{2,3}^{g_{ij}}
	\end{psmatrix}
	\]
	with $g_{ij}\restr_{\bA_{ij}}=f_{ij}$, and obtain the structure $\bH_{k+1}$. (Note that $\bH_{k+1}$ can be always chosen in such a way that $\bH_k\leq \bH_{k+1}$ by taking for $\bH_{k+1}$ the appropriate structure from the isomorphism class, i.e.\ by changing the representative of the class.) 
	\item\label{korak2} Suppose that we have constructed $\bH_k$, for $k$ odd. Take the next structure $\bA\in\cR$. By JEP, there exists $\bH_{k+1}$ such that both $\bH_k$ and $\bA$ are embeddable into it: 
	\[
	\begin{psmatrix}[rowsep=1cm]
	\bA & \\
	& \bH_{k+1} \\
	\bH_k  &
	\everypsbox{\scriptstyle}
	\ncline{H->}{1,1}{2,2}
	\ncline{H->}{3,1}{2,2}
	\end{psmatrix}
	\]
	\end{enumerate}
	Together, Step~\ref{korak1} and Step~\ref{korak2} form the induction step. 

	This completes the description of the construction. It is left to prove that the constructed chain fulfills conditions \eqref{prvi} and \eqref{drugi}: 
	\begin{description}
	\item[About \eqref{prvi}]
	We identify a given triple $(\bA,\bB,f)$ in the list for $\bH_i$ in the construction. Let it be $(\bA_{il},\bB_{il},f_{il})$. Compute $j:=\pi(i,l)$. The requested $g$ and $\bH_j$ are those that appear on the diagram for the construction of $\bH_{j+1}$: 
	\[
	\begin{psmatrix}
	\bA & & &\\
	&\bA_{il} & \bH_i & \bH_{j}\\
	&\bB_{il} & & \bH_{j+1}\\
	\bB & & &
	\everypsbox{\scriptstyle}
	\ncline{->}{2,2}{2,3}_{f_{il}}
	\ncline{H->}{2,2}{3,2}<{\leq}
	\ncline{H->}{2,3}{2,4}_{\leq}
	\ncline{H->}{2,4}{3,4}<{\leq}
	\ncline{->}{3,2}{3,4}^{g_{il}}
	\ncline{H->}{1,1}{4,1}<{\leq}
	\ncline{->}{1,1}{2,2}<{\cong}
	\ncline{->}{1,1}{2,3}^{f}
	\ncline{->}{4,1}{3,2}^{\cong}
	\ncline{->}{4,1}{3,4}^{g}
	\end{psmatrix}
	\]
	\item[About \eqref{drugi}]
	Follows immediately from Step~\ref{korak2} in the construction.
	\end{description}
	\end{description}
\end{proof}
A class of finite relational structures over the same signature, that is closed under isomorphism and that has the HP, the JEP, and the HAP will be called \emph{homo-amalgamation class}.

Note that, in contrast to \Fraisse's construction, the construction of a homomorphism-homogeneous structure from an age does not guaranty the uniqueness of the result up to isomorphism. Indeed, all countably infinite linear orders are homomorphism-homogeneous and have as the age the class of all finite linear orders. Let us therefore have a look, how the different homomorphism-homogeneous structures with the same age are interrelated.
\begin{definition}
	Let $\bH$ and $\bH'$ be two relational structures. We write $\bH\preceq_h \bH'$ if
	\begin{itemize}
		\item $\Age (\bH)\supseteq \Age (\bH')$, and
		\item for all finite $\bA\leq \bB\leq \bH$ we have that every homomorphism from $\bA$ to $\bH'$ extends to a homomorphism from $\bB$ to $\bH'$.
	\end{itemize}
\end{definition}
An easy observation is:
\begin{lemma}
	If $\bH$ is countable and  $\bH\preceq_h \bH'$, then $\bH\to\bH'$.\qed
\end{lemma}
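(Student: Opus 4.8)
The plan is to produce the homomorphism $\bH\to\bH'$ by a one-directional back-and-forth (``forth'') argument: use the countability of $\bH$ to write it as the union of an increasing $\omega$-chain of finite substructures, and use the extension clause in the definition of $\preceq_h$ to carry a homomorphism along that chain.

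Concretely, after disposing of the trivial case $H=\emptyset$, I would fix an enumeration $H=\{h_n : n\in\mathbb{N}\}$ and let $\bA_n\leq\bH$ be the substructure induced on $\{h_0,\dots,h_n\}$ --- finite, because the signature is relational --- so that $\bA_0\leq\bA_1\leq\cdots$ and $\bigcup_{n\in\mathbb{N}}\bA_n=\bH$. Then I would construct a coherent chain of homomorphisms $f_n\colon\bA_n\to\bH'$ with $f_{n+1}\restr_{A_n}=f_n$ by induction on $n$. The induction step is immediate from $\bH\preceq_h\bH'$: since $\bA_n\leq\bA_{n+1}\leq\bH$ are finite and $f_n\colon\bA_n\to\bH'$ is a homomorphism, it extends to some $f_{n+1}\colon\bA_{n+1}\to\bH'$. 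Finally $f:=\bigcup_{n\in\mathbb{N}}f_n$ is a well-defined map $H\to H'$ (the $f_n$ are pairwise compatible and every element of $H$ lies in some $A_n$), and it is a homomorphism, since for a relation symbol $R$ and a tuple $\bar{a}\in R^{\bH}$ all coordinates of $\bar{a}$ already lie in some $\bA_n$, whence $f(\bar{a})=f_n(\bar{a})\in R^{\bH'}$.

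The only delicate point --- the \emph{main obstacle}, modest as it is --- is the base case: the induction needs some homomorphism $\bA_0\to\bH'$ to start from, and neither the age-inclusion nor the extension clause of $\preceq_h$ manufactures a homomorphism out of thin air. The natural remedy is to start the chain one step earlier at the empty substructure $\emptyset\leq\bA_0\leq\bH$: the empty map is vacuously a homomorphism $\emptyset\to\bH'$, and the extension clause then upgrades it to $f_0\colon\bA_0\to\bH'$ (incidentally forcing $\bH'\neq\emptyset$, as it must be once $\bH\neq\emptyset$). If one prefers to avoid the empty structure, the same instance of the extension clause says precisely that every one-element substructure of $\bH$ admits a homomorphism into $\bH'$, which is exactly the base case needed. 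Note that only the extension clause of $\bH\preceq_h\bH'$ enters the argument; the age inclusion $\Age(\bH)\supseteq\Age(\bH')$ is not used for this direction.
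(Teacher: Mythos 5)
Your proof is correct and is exactly the intended argument (the paper omits the proof as an easy observation): write the countable $\bH$ as an increasing union of finite substructures and extend a homomorphism along the chain using the second clause of $\preceq_h$. Your treatment of the base case is fine under the convention---consistent with Hodges, whom the paper follows---that the empty structure is a legitimate finite substructure; but note that if one insists on non-empty structures, then the age clause $\Age(\bH)\supseteq\Age(\bH')$, which you assert is unused, is precisely what supplies the base case: picking any $b\in H'$, the one-point substructure $\langle b\rangle\leq\bH'$ embeds into $\bH$, which yields a homomorphism from a one-point substructure of $\bH$ into $\bH'$ from which your chain can start.
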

\begin{proposition}
Let $\bH$ and $\bH'$ be two relational structures.
\begin{enumerate}[(a)]
\item\label{item:3} If $\bH\preceq_h \bH'$, and $\bH$ is weakly homomorphism-homogeneous, then $\bH'$ is weakly homomorphism-homogeneous, too.
\item\label{item:4} If $\bH'$ is weakly homomorphism-homogeneous, and $\Age (\bH)=\Age (\bH')$, then $\bH\preceq_h \bH'$. 
\end{enumerate}
\end{proposition}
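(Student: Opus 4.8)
The plan is to prove the two parts separately, in each case unwinding the definitions of $\preceq_h$ and of weak homomorphism-homogeneity and then chasing a homomorphism-extension diagram. For part~(\ref{item:3}): assume $\bH\preceq_h\bH'$ with $\bH$ weakly homomorphism-homogeneous, and let $\bB_0<\bC_0$ be finite substructures of $\bH'$ together with a homomorphism $h:\bB_0\to\bH'$; we must extend $h$ to $\bC_0$. The idea is to transfer the problem to $\bH$ using $\Age(\bH)\supseteq\Age(\bH')$: since $\bC_0\in\Age(\bH')\subseteq\Age(\bH)$, there is an embedding $e:\bC_0\injto\bH$, and we set $\bB:=e(\bB_0)\leq\bC:=e(\bC_0)\leq\bH$. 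Now $h\circ(e\restr_{\bB_0})^{-1}:\bB\to\bH'$ is a homomorphism from a finite substructure of $\bH$ into $\bH'$, so the definition of $\bH\preceq_h\bH'$ gives an extension $g:\bC\to\bH'$. Then $g\circ(e\restr_{\bC_0}):\bC_0\to\bH'$ is a homomorphism extending $h$, as one checks by restricting to $\bB_0$. Thus $\bH'$ is weakly homomorphism-homogeneous. (Strictly, one should note that weak homomorphism-homogeneity only requires extending homomorphisms $\bB_0\to\bH'$ where $\bB_0<\bC_0$ are \emph{substructures} of $\bH'$, so the hypothesis that $\bH$ is weakly homomorphism-homogeneous is actually not needed for this direction beyond what $\preceq_h$ already supplies — I will double-check whether the intended reading of (\ref{item:3}) really uses it, and if not, state the stronger conclusion.)

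For part~(\ref{item:4}): assume $\bH'$ is weakly homomorphism-homogeneous and $\Age(\bH)=\Age(\bH')$. The first clause of $\bH\preceq_h\bH'$, namely $\Age(\bH)\supseteq\Age(\bH')$, is immediate. For the second clause, take finite $\bA\leq\bB\leq\bH$ and a homomorphism $f:\bA\to\bH'$; we must extend $f$ to $\bB$. Since $\bB\in\Age(\bH)=\Age(\bH')$, fix an embedding $e:\bB\injto\bH'$, and consider the finite substructures $e(\bA)\leq e(\bB)$ of $\bH'$. The homomorphism $f\circ(e\restr_{\bA})^{-1}:e(\bA)\to\bH'$ is a local homomorphism of $\bH'$ between the substructures $e(\bA)<e(\bB)$, so weak homomorphism-homogeneity of $\bH'$ extends it to a homomorphism $\bar g:e(\bB)\to\bH'$. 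Then $g:=\bar g\circ e:\bB\to\bH'$ is a homomorphism, and for $a\in A$ we have $g(a)=\bar g(e(a))=f((e\restr_{\bA})^{-1}(e(a)))=f(a)$, so $g$ extends $f$. Hence $\bH\preceq_h\bH'$.

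The arguments are essentially routine diagram chases once the right substructures of the target are named via the age condition; there is no serious obstacle. The one point requiring a little care is the handling of the subtlety noted above in part~(\ref{item:3}): the asymmetry in the definition of $\preceq_h$ (the second clause quantifies over $\bA\leq\bB\leq\bH$, i.e.\ substructures of the \emph{source}) means one must be careful about the direction of the transfer, and about whether ``finite substructures of $\bH'$'' in the conclusion are being replaced by embedded copies coming from $\Age(\bH)$ — this uses $\Age(\bH)\supseteq\Age(\bH')$ in an essential way, so the inclusion must be invoked explicitly. I also want to be sure that in both parts the extension produced is genuinely a homomorphism of the ambient structure $\bH'$ (not merely of some substructure), which follows because $\preceq_h$ and weak homomorphism-homogeneity both deliver homomorphisms into $\bH'$ itself; composing with embeddings preserves this.
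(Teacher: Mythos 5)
Your proof is correct and follows essentially the same route as the paper: in both parts one transfers the extension problem across an isomorphic copy supplied by the age inclusion and then invokes, respectively, the extension clause of $\preceq_h$ or the weak homomorphism-homogeneity of $\bH'$. Your side observation also matches the paper, whose proof of part (a) likewise never uses the hypothesis that $\bH$ is weakly homomorphism-homogeneous.
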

\begin{proof}
\begin{description}
\item[About (\ref{item:3})] Let $\bA'$, $\bB'$ be finite substructures
  of $\bH'$, such that $\bA'\leq \bB'$, and let $f':\bA'\rightarrow
  \bH'$ be a local homomorphism. 
Since $\Age (\bH)\supseteq \Age (\bH')$, and $\bA',\bB'\in\Age
(\bH')$, it follows that there is a $\bB\leq \bH$, such that $\bB\cong
\bB'$. 
Further, since $\bA'\leq \bB'$, it follows that there is an $\bA\leq \bB$, such that $\bA\cong \bA'$, and that the following diagram commutes:
\[
\begin{psmatrix}
  \bA & \bA'\\
  \bB & \bB'
  \everypsbox{\scriptstyle}
  \ncline{H->}{1,1}{2,1}<{\leq}
  \ncline{H->}{1,2}{2,2}<{\leq}
  \ncline{->}{1,1}{1,2}^{\varphi}_{\cong}
  \ncline{->}{2,1}{2,2}^{\psi}_{\cong}
\end{psmatrix}
\]
where $\psi$ is any isomorphism, $\varphi$ is obtained by restricting
$\psi$. Since $\bH\preceq_h\bH'$, the homomorphism
$f'\circ\varphi: \bA\to\bH'$ has an extension $g$ to $\bB$, as
depicted in the following commuting diagram:
\begin{equation}\label{eq:2}
\raisebox{\dimexpr-\height+\ht\strutbox}{%
\ensuremath{\begin{psmatrix}
  \bA & \bA' & \\
  \bB & \bB' & \\
      &      & \bH'
  \everypsbox{\scriptstyle}
  \ncline{H->}{1,1}{2,1}<{\leq}
  \ncline{H->}{1,2}{2,2}<{\leq}
  \ncline{->}{1,1}{1,2}^{\varphi}_\cong
  \ncline{->}{2,1}{2,2}^{\psi}_\cong
  \ncline{->}{1,2}{3,3}^{f'}
  \ncline{->}{2,1}{3,3}_{g}
\end{psmatrix}}}
\end{equation}
Now we define $g':=g\circ\psi^{-1}$. Using diagram~\eqref{eq:2}, it can easily be checked that this is an
extension of $f'$ to $\bB'$. 
This  shows that $\bH'$ is weakly homomorphism-homogeneous.
\item[About (\ref{item:4})] It suffices to show that for a given finite
  structure $\bA$ with $\bA\leq\bB\leq\bH$, and a homomorphism
  $f:\bA\rightarrow \bH'$, we can always extend $f$ to a homomorphism
  from $\bB$ to $\bH'$.

Note that $\bA,\bB\in\Age(\bH)=\Age(\bH')$. Then there exists a
$\bB'\leq \bH'$, such that $\bB\cong \bB'$. Let $\bA'\leq
\bB'$ such that $\bA'\cong \bA$, and let
$\varphi:\bA\to\bA'$, $\psi:\bB\to\bB'$ be isomorphisms that make the following diagram
commutative: 
\[
\begin{psmatrix}
  \bA & \bA'\\
  \bB & \bB'
  \everypsbox{\scriptstyle}
  \ncline{->}{1,1}{1,2}^{\varphi}_{\cong}
  \ncline{->}{2,1}{2,2}^{\psi}_{\cong}
  \ncline{H->}{1,1}{2,1}<{\leq}
  \ncline{H->}{1,2}{2,2}<{\leq}
\end{psmatrix}
\]
Let $f':=f\circ\varphi^{-1}$. Then $f'$ is a homomorphism from $\bA'$ to $\bH'$. Since $\bH'$ is weakly homomorphism-homogeneous, there exists a homomorphism $g':\bB'\to \bH'$ that extends $f'$:
\[
\begin{psmatrix}
         & &\bH'\\
  \bA & \bA'&\\
  \bB & \bB'&
  \everypsbox{\scriptstyle}
  \ncline{->}{2,1}{2,2}^{\varphi}_\cong
  \ncline{->}{3,1}{3,2}^{\psi}_\cong
  \ncline{H->}{2,1}{3,1}<{\leq}
  \ncline{H->}{2,2}{3,2}<{\leq}
  \ncline{->}{2,1}{1,3}^{f}
  \ncline{->}{2,2}{1,3}_{f'}
  \ncline{->}{3,2}{1,3}_{g'}
\end{psmatrix}
\]
We define $g:=g'\circ\psi$. It is easily checked that $g$ extends $f$ to a
homomorphism from $\bB$ to $\bH'$. Thus $\bH\preceq_h \bH'$.
\end{description}
\end{proof}

The second part of the previous Proposition gives the interrelation of any two homomorphism-homogeneous countable structures with the same age:

\begin{corollary}
  Let $\bA$, and $\bB$ be two weakly homomorphism-homogeneous
  structures with the same age. Then $\bA \preceq_h \bB$ and $\bB
  \preceq_h \bA$.  In particular,
any two countable homomorphism-homogeneous relational structures with
the same age are homomorphism-equivalent. \qed
\end{corollary}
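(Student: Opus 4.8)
The plan is to read off the first claim directly from part~(\ref{item:4}) of the preceding Proposition, applied twice with the roles of the two structures interchanged. Since $\bB$ is weakly homomorphism-homogeneous and $\Age(\bA)=\Age(\bB)$, the hypotheses of part~(\ref{item:4}) are met with $\bH:=\bA$ and $\bH':=\bB$, so $\bA\preceq_h\bB$. The situation is completely symmetric in $\bA$ and $\bB$ --- both are assumed weakly homomorphism-homogeneous, and equality of ages is a symmetric relation --- so the same Proposition with $\bH:=\bB$ and $\bH':=\bA$ yields $\bB\preceq_h\bA$. This establishes the first sentence of the Corollary.

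For the ``in particular'' part, suppose in addition that $\bA$ and $\bB$ are countable and homomorphism-homogeneous. Since for countable structures homomorphism-homogeneity and weak homomorphism-homogeneity coincide, the first part applies and gives $\bA\preceq_h\bB$ and $\bB\preceq_h\bA$. Now invoke the Lemma stating that a countable structure $\bH$ with $\bH\preceq_h\bH'$ satisfies $\bH\to\bH'$: applying it once to $\bA\preceq_h\bB$ and once to $\bB\preceq_h\bA$ produces homomorphisms $\bA\to\bB$ and $\bB\to\bA$. By definition this means that $\bA$ and $\bB$ are homomorphism-equivalent.

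There is essentially no obstacle here: all the substantive work --- the back-and-forth style argument transporting a partial homomorphism through an isomorphic copy of the relevant finite substructure living inside the target --- has already been carried out in the proof of part~(\ref{item:4}). The only points requiring a moment of care are (i) that the hypotheses of part~(\ref{item:4}) are genuinely symmetric in the two structures, so that a single application suffices for each direction, and (ii) that the countability hypothesis needed for the Lemma is available, which is precisely what lets us upgrade the relation $\preceq_h$ to the existence of an actual homomorphism, and hence to homomorphism-equivalence.
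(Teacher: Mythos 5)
Your proposal is correct and is exactly the argument the paper intends: the corollary is stated with a \qed precisely because it follows immediately from part (b) of the preceding Proposition applied twice by symmetry, together with the Lemma upgrading $\preceq_h$ to the existence of a homomorphism for countable structures. Nothing further is needed.
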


\begin{remark}
  A binary relation similar to $\preceq_h$ appeared in \cite{CamNes06}
  in the context of a \Fraisse-type theorem for monomorphism
  homogeneous structures. 
\end{remark}

\section{Homomorphisms between weakly oligomorphic structures}
\label{sec:ages-weakly-olig}

In this section we collect some  results about the existence of homomorphisms
between weakly oligomorphic structures.

\begin{definition}
  Let $\cA$, $\cB$ be two classes of relational structures over a common
  signature. We say, that  $\cA$ \emph{projects} onto $\cB$ (written $\cA\to\cB$) if for every
  $\bA\in\cA$ there exists a $\bB\in\cB$, such that
  $\bA\to\bB$.
\end{definition} 
Clearly, for two relational structures $\bA$ and $\bB$, the condition $\Age(\bA)\to\Age(\bB)$ is
necessary for $\bA\to\bB$. 
In the following we will prove that if $\bA$,
is countable, and $\bB$ is weakly oligomorphic, then this
condition is also sufficient. Before coming to the prove of this claim,
let us prove a more specific result about weakly homomorphism-homogeneous relational structures. 
\begin{proposition}\label{prop:agehom}
  Let $\bA$, $\bB$ be a relational structures over the same
  signature such that $\Age(\bA)\to\Age(\bB)$, and suppose that $\bA$
  is countable, and that $\bB$ is weakly oligomorphic and weakly
  homomorphism-homogeneous. Then
  $\bA\to\bB$.
\end{proposition}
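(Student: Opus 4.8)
The plan is to construct a homomorphism $\bA\to\bB$ by successive approximation along an enumeration $A=\{a_1,a_2,\dots\}$ of the carrier of $\bA$ (if $A$ is finite the claim is immediate: then $\bA$ itself lies in $\Age(\bA)$, so $\Age(\bA)\to\Age(\bB)$ provides $\bB'\in\Age(\bB)$ with $\bA\to\bB'\injto\bB$; so assume $A$ is countably infinite). Let $\bA_n\leq\bA$ be the substructure on $\{a_1,\dots,a_n\}$. The naive stagewise construction may get stuck, since a local homomorphism $\bA_n\to\bB$ need not extend to $\bA_{n+1}$, so I will instead track the partial maps that remain extendable indefinitely. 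For $1\leq n\leq m$ let $D_{n,m}\subseteq B^n$ be the set of tuples $(c_1,\dots,c_n)$ for which $a_i\mapsto c_i$ is a homomorphism $\bA_n\to\bB$ that extends to a homomorphism $\bA_m\to\bB$. The goal is to build a chain $\bar c^{(1)}\subseteq\bar c^{(2)}\subseteq\cdots$ with $\bar c^{(n)}\in\bigcap_{m\geq n}D_{n,m}$ and $\bar c^{(n+1)}$ extending $\bar c^{(n)}$; then $f(a_i):=c^{(i)}_i$ is a homomorphism $\bA\to\bB$, because each relational instance in $\bA$ already lives in some $\bA_n$ and there $f$ coincides with the homomorphism coded by $\bar c^{(n)}$.

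Three observations make this possible. First, $\Age(\bA)\to\Age(\bB)$ yields for every $m$ a homomorphism $\bA_m\to\bB$ (factor $\bA_m\to\bB_m\injto\bB$ through some $\bB_m\in\Age(\bB)$), and restricting it shows $D_{n,m}\neq\emptyset$ for all $n\leq m$. Second, each $D_{n,m}$ is definable in $\bB$ by a positive existential type, namely the positive atomic diagram of $\bA_n$ together with the positive primitive assertion ``there exist $x_{n+1},\dots,x_m$ satisfying the positive atomic diagram of $\bA_m$'' (a single positive primitive formula if the signature is finite, an intersection of such formulas in general). Third --- and this is where the hypotheses on $\bB$ are used --- $\bB$ is endolocal by Proposition~\ref{MainThm}, so a relation of $\bB$ is positive-existential-type definable precisely when it is closed under passing from $\bar c$ to any $\bar c'$ with $\pTpZ_\bB(\bar c)\subseteq\pTpZ_\bB(\bar c')$; since $\bB$ is weakly oligomorphic, there are only finitely many such relations of each arity. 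Consequently, for each fixed $n$ the decreasing chain $D_{n,n}\supseteq D_{n,n+1}\supseteq\cdots$ stabilizes, and choosing the stabilization bounds $m_n$ increasing with $m_n\geq n+1$ we obtain $D_n:=D_{n,m_n}=\bigcap_{m\geq n}D_{n,m}\neq\emptyset$.

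The crux is then the equality $\pi_n(D_{n+1})=D_n$ for the coordinate projection $\pi_n\colon B^{n+1}\to B^n$. The inclusion $\subseteq$ is merely that restrictions of homomorphisms are homomorphisms. For $\supseteq$, observe that $\pi_n(D_{n+1,m})=D_{n,m}$ whenever $m\geq n+1$: a homomorphism $\bA_m\to\bB$ already includes $a_{n+1}$ in its domain, so the extendability to $\bA_m$ of the map coded by $(c_1,\dots,c_n)$ is equivalent to the existence of a $c_{n+1}$ with $(c_1,\dots,c_{n+1})\in D_{n+1,m}$; taking $m=m_{n+1}\geq m_n$ gives $\pi_n(D_{n+1})=D_{n,m_{n+1}}=D_n$. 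With this in hand one starts from any $\bar c^{(1)}\in D_1$ and, given $\bar c^{(n)}\in D_n$, chooses $c_{n+1}$ with $(\bar c^{(n)},c_{n+1})\in D_{n+1}$; the union of the $\bar c^{(n)}$ is the desired homomorphism. I expect this surjectivity of $\pi_n$ to be the main obstacle: it is tempting but false that an arbitrary element of $D_{n+1}$ restricts to the tuple $\bar c^{(n)}$ already fixed, and equally tempting but false to transport a good extension between tuples with the same $\pTpZ_\bB$, since the ``downward'' part of such a class need not be positive-existential-type definable (witness $\exists z\,(R(x,z)\wedge R(z,y))$ on a long path). The resolution is precisely the stabilization of the chains above, which lets one replace $D_{n+1}$ by $D_{n+1,m_{n+1}}$ for large $m_{n+1}$ and thereby reduces the surjectivity to the triviality that a homomorphism of $\bA_{m_{n+1}}$ already assigns a value to $a_{n+1}$.
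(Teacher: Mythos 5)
Your argument is correct in substance but follows a genuinely different route from the paper's. The paper defines an equivalence on homomorphisms $\bA_n\to\bB$ via local isomorphisms between their image tuples, shows there are finitely many classes using endolocality, applies K\H{o}nig's tree lemma to the resulting finitely branching tree, and then uses weak homomorphism-homogeneity a second time to splice representatives of consecutive classes along the infinite branch into an actual chain of compatible homomorphisms. You avoid both the tree and the splicing by tracking extendability directly: the sets $D_{n,m}$ record which partial maps survive to level $m$, weak oligomorphy forces each chain $D_{n,n}\supseteq D_{n,n+1}\supseteq\cdots$ to stabilize at a nonempty value, and the projection identity $\pi_n(D_{n+1,m})=D_{n,m}$ makes the diagonal construction immediate. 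Notably, your proof never actually uses weak homomorphism-homogeneity of $\bB$ --- your appeal to endolocality only extracts the finiteness of the number of definable relations per arity, which is just the definition of weak oligomorphy --- so you have in effect proved the stronger Proposition~\ref{prop:wo-hom} directly, bypassing the paper's detour through the positive-existential expansion. That is a real simplification.

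One step needs shoring up, because the paper explicitly allows signatures of arbitrary cardinality. Your claim that $D_{n,m}$ is defined by the type consisting of the formulas $\exists x_{n+1}\dots x_m\,\bigwedge F$, with $F$ ranging over finite subsets of the positive atomic diagram of $\bA_m$, only gives the inclusion $D_{n,m}\subseteq\bigcap_F\bigl(\exists\bar x\,\bigwedge F\bigr)^\bB$; the reverse inclusion asks for a single witness tuple serving all atomic formulas simultaneously, which does not follow formula by formula. The fix uses weak oligomorphy once more: the quantifier-free positive relations $\bigl(\bigwedge F\bigr)^\bB\subseteq B^m$ form a downward-directed family inside a finite set of $m$-ary definable relations, hence admit a minimum $\bigl(\bigwedge F_0\bigr)^\bB$ equal to their intersection; thus the full atomic diagram of $\bA_m$ is equivalent over $\bB$ to the finite conjunction $\bigwedge F_0$, and $D_{n,m}=\bigl(\exists\bar x\,\bigwedge F_0\bigr)^\bB$ is genuinely positive primitive definable. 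With that inserted, the stabilization argument and the remainder of your proof go through.
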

\begin{proof}
	If $\bA$ is finite, then nothing needs to be proved. So we assume that $\bA$ is countably infinite. In that case we can write the carrier $A$ as $A=\{a_0,a_1,a_2,\dots\}$. Define  $A_n:=\{a_0,\dots,a_{n-1}\}$, and let $\bA_n$ be the substructure of $\bA$ that is induced by $A_n$. 

	Let us define an equivalence relation on the homomorphisms from $\bA_n$ to $\bB$: Let $h_1,h_2: \bA_n\to\bB$. Then the two homomorphisms induce tuples 
	\[\bar{h}_1=(h_1(a_0),\dots,h_1(a_{n-1})), \text{ and } \bar{h}_2=(h_2(a_0),\dots,h_2(a_{n-1})).\]
	We call $h_1$ and $h_2$ equivalent (written $h_1\cong h_2$) if there exists a local isomorphism that maps $\bar{h}_1$ to $\bar{h}_2$. 

	We claim that there are just finitely many equivalence classes of homomorphisms from $\bA_n$ to $\bB$. Since $\bB$ is weakly oligomorphic, and weakly homomorphism-homogeneous, by  Proposition~\ref{MainThm} it follows that $\bB$ is endolocal. In other words, the $n$-ary relations that are definable by positive-existential formulae coincide with the relations that  are closed with respect to local homomorphisms. If there is a local homomorphism that maps a tuple $\bar{a}$ to a tuple $\bar{b}$, and if there is a local homomorphism that maps $\bar{b}$  to $\bar{a}$, then there is also a local isomorphism that maps $\bar{a}$ to $\bar{b}$. In this case we call $\bar{a}$ and $\bar{b}$ equivalent. Clearly, two tuples $\bar{a}$ and $\bar{b}$ are equivalent if and only if the respective  closure of $\{\bar{a}\}$ and $\{\bar{b}\}$ under local homomorphisms coincide. Since $\bB$ is weakly oligomorphic, there are just finitely many $n$-ary relations over $B$ that are closed with respect to local homomorphisms of $\bB$. Hence there are just finitely many equivalence classes of $n$-tuples over $\bB$. By this reason there can also exist only finitely many equivalence classes of homomorphisms from $\bA_n$ to $\bB$.

	Next we claim, that if $h_1,h_2:\bA_{n+1}\to\bB$ with $h_1\cong h_2$, then $h_1\restr_{A_n}\cong h_2\restr_{A_n}$. Indeed, the same local isomorphism that maps $\bar{h}_1$ to $\bar{h}_2$ will also map $\overline{h_1\restr_{A_n}}$ to $\overline{h_2\restr_{A_n}}$. 

	Next, we define a tree, whose nodes on level $n$ are all equivalence classes of homomorphisms from $\bA_n$ to $\bB$. For the equivalence class $[h]_{\cong}$ that is generated by $h:\bA_{n+1}\to \bB$, the unique lower neighbor is $[h\restr_{A_n}]_{\cong}$. 

	By the above proved claims, this tree is well-defined and finitely branching. Moreover, the tree has nodes on every level, since $\Age(\bA)\to\Age(\bB)$. Hence, by K\H{o}nig's tree-lemma, it has an infinite branch $([h_i]_{\cong})_{i\in\mathbb{N}}$. 

	It remains, to construct the homomorphism from $\bA$ to $\bB$. We proceed by induction. Our goal is to construct a tower $(\hat{h}_i)_{i\in\mathbb{N}}$ of homomorphisms $\hat{h}_i$ from  $\bA_i$ to $\bB$ such that for every $i$, there exists a local homomorphism $g_i$ such that $g_i\circ h_i=\hat{h}_i$.

	Define $\hat{h}_0:=h_0$. Suppose that $\hat{h}_i$ is already defined. Then $h_{i+1}\restr_{A_i}\cong h_i$. Let us denote $h_{i+1}\restr_{A_i}$ by $\tilde{h}_i$. Then there exists a local isomorphism $\iota$ such that $\iota\circ\tilde{h}_i=h_i$. Without loss of generality, $\dom(\iota)=\img(\tilde{h}_i)$, $\dom(g_i)=\img(h_i)$, and $\img(g_i)=\img(\hat{h}_i)$. Then $g_i\circ\iota:\img(\tilde{h}_i)\to \img(\hat{h}_i)$ is a local homomorphism, and $\img(h_{i+1})\supseteq \img(\tilde{h}_i)$. Since $\bB$ is weakly homomorphism homogeneous, $g_i\circ\iota$ extends to $\img(h_{i+1})$ (cf. the following diagram).
	\[
	\begin{psmatrix}
		\img(\tilde{h}_i) & \img(h_i) & \img(\hat{h}_i)\\
		\img(h_{i+1})      &            & \bB
		\everypsbox{\scriptstyle}
		\ncline{H->}{1,1}{2,1}<{\le}
		\ncline{H->}{1,3}{2,3}>{\le}
		\ncline{->}{1,1}{1,2}^{\iota}_{\cong}
		\ncline{->>}{1,2}{1,3}^{g_i}
		\ncline{->}{2,1}{2,3}^{g_{i+1}}
	\end{psmatrix}
	\]
	Now we define $\hat{h}_{i+1}(x):= g_{i+1}(h_{i+1}(x))$. Then for any $x\in A_i$, we calculate
	\[
	g_{i+1}(h_{i+1}(x)) = g_{i+1}(\tilde{h}_i(x))=g_i(\iota(\tilde{h}_i(x)))=g_i(h_i(x))=\hat{h}_i(x).
	\]
	Thus, $\hat{h}_{i+1}$ is indeed an extension of $\hat{h}_i$, and $g_{i+1}\circ h_{i+1}=\hat{h}_{i+1}$.

	It remains to note that the union over all $\hat{h}_i$ is a homomorphism from $\bA$ to $\bB$.
\end{proof}

\begin{proposition}\label{prop:wo-hom}
  Let $\bA$, $\bB$ be  relational structures over the same
  signature such that $\Age(\bA)\to\Age(\bB)$, and suppose that $\bA$
  is countable, and that $\bB$ is weakly oligomorphic. Then
  $\bA\to\bB$.
\end{proposition}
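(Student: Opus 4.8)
The plan is to reduce Proposition~\ref{prop:wo-hom} to Proposition~\ref{prop:agehom} by replacing $\bB$ with a suitable weakly homomorphism-homogeneous structure that $\bB$ maps to and from, while keeping the age under control. Recall from the excerpt that every weakly oligomorphic relational structure has a positive existential expansion that is homomorphism-homogeneous; more directly, the results of Section~\ref{sec:homom-homog-amalg} give us a way to pass from $\bB$ to a weakly homomorphism-homogeneous structure. So the first step is: from the weakly oligomorphic $\bB$, produce a countable weakly homomorphism-homogeneous structure $\bB^{*}$ (over the same signature) such that $\bB\to\bB^{*}$ and $\bB^{*}\to\bB$ — in particular $\Age(\bB^{*})$ and $\Age(\bB)$ have the same projection behaviour, and ideally $\Age(\bB^{*})\subseteq\Age(\bB)$ so that $\Age(\bA)\to\Age(\bB)$ still gives $\Age(\bA)\to\Age(\bB^{*})$. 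One clean route: let $\cC$ be the age of $\bB$, observe (or arrange) that it is a homo-amalgamation class — weak oligomorphy of $\bB$ should force the HAP on $\Age(\bB)$ via endolocality, much as in Theorem (\ref{item:1}) — and then apply Theorem (\ref{item:2}) to get a countable homomorphism-homogeneous $\bB^{*}$ with $\Age(\bB^{*})=\Age(\bB)$.

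Once $\bB^{*}$ is in hand, the argument is short. We have $\Age(\bA)\to\Age(\bB)=\Age(\bB^{*})$, $\bA$ is countable, and $\bB^{*}$ is weakly homomorphism-homogeneous; if moreover $\bB^{*}$ is weakly oligomorphic (which it is, having the same age as $\bB$ — actually one needs weak oligomorphy, not just the age, but since $\bB^{*}$ is homomorphism-homogeneous over the signature of $\bB$, and $\Age(\bB^{*})$ realizes the same finite structures, weak oligomorphy transfers; alternatively invoke that a homomorphism-homogeneous structure with the age of a weakly oligomorphic structure is itself weakly oligomorphic), then Proposition~\ref{prop:agehom} applies and yields $\bA\to\bB^{*}$. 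Composing with a homomorphism $\bB^{*}\to\bB$ gives $\bA\to\bB$, as desired.

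The main obstacle is exactly the first step: constructing $\bB^{*}$ together with the homomorphism $\bB^{*}\to\bB$. Verifying $\Age(\bB^{*})=\Age(\bB)$ is automatic from Theorem (\ref{item:2}), and $\bB\to\bB^{*}$ follows from the JEP/HAP construction (or from $\bB\preceq_h\bB^{*}$ plus the Lemma that $\preceq_h$ with a countable source yields a homomorphism) — but the reverse $\bB^{*}\to\bB$ is the delicate point, because $\bB$ itself is only weakly oligomorphic, not weakly homomorphism-homogeneous, so we cannot symmetrically apply $\preceq_h$. Here the right tool is to note that $\bB$ embeds into, or is homomorphism-equivalent to, a structure built from its positive existential theory: concretely, since $\bB$ is weakly oligomorphic one can take a positive-existential expansion $\bB'$ of $\bB$ that is homomorphism-homogeneous (cited in the introduction), and $\bB'$ restricted to the original signature is homomorphism-equivalent to $\bB$ while being weakly homomorphism-homogeneous in the original signature; then set $\bB^{*}$ to be (a structure homomorphism-equivalent to) this reduct. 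This is the step whose details I would need to check carefully — essentially verifying that reducing a homomorphism-homogeneous positive-existential expansion back to the original signature yields something weakly homomorphism-homogeneous with the same age — but all the needed ingredients (Proposition~\ref{MainThm}, the cited expansion result, and the Corollary on homomorphism-equivalence of structures with the same age) are already available. After that, Proposition~\ref{prop:agehom} finishes the proof by a two-line composition.
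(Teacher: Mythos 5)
Your overall strategy---reduce to Proposition~\ref{prop:agehom} by replacing the target structure with a weakly homomorphism-homogeneous one---is the right instinct and is indeed what the paper does, but both concrete routes you propose for producing $\bB^{*}$ rest on false claims. First, weak oligomorphy of $\bB$ does \emph{not} force the HAP on $\Age(\bB)$: every finite structure is weakly oligomorphic, and the age of the $5$-cycle $C_5$ fails the HAP (take $f_1\colon\overline{K_2}\to K_1$ and $f_2\colon\overline{K_2}\injto P_4$ onto the two endpoints of the path; any homomorphism of $P_4$ identifying its endpoints has a triangle in its image, and $\Age(C_5)$ is triangle-free). Endolocality is equivalent to weak homomorphism-homogeneity only \emph{for} weakly oligomorphic structures (Proposition~\ref{MainThm}); a weakly oligomorphic $\bB$ need not be endolocal, so part~(\ref{item:2}) of the \Fraisse-type theorem of Section~\ref{sec:homom-homog-amalg} cannot be applied to $\Age(\bB)$. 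Second, your fallback is circular: the reduct of the homomorphism-homogeneous positive-existential expansion $\bB'$ back to the original signature is literally $\bB$ itself, and it is in general not weakly homomorphism-homogeneous---in the original signature there are strictly more local homomorphisms to extend (those not preserving the positive existential types), and these are exactly the ones that may fail to extend.

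The missing idea is to move $\bA$ \emph{up} to the expanded signature rather than trying to bring $\bB'$ back down. The paper expands $\bB$ to $\widehat{\bB}$ by all positive existential definitions (still weakly oligomorphic, and weakly homomorphism-homogeneous by Proposition~\ref{MainThm}), and expands $\bA$ to $\widehat{\bA}$ over the same enlarged signature by interpreting every new symbol as the \emph{empty} relation. The hypothesis $\Age(\bA)\to\Age(\bB)$ then transfers verbatim to $\Age(\widehat{\bA})\to\Age(\widehat{\bB})$, since a map out of a structure whose extra relations are empty preserves them vacuously; Proposition~\ref{prop:agehom} gives $\widehat{\bA}\to\widehat{\bB}$, and any such homomorphism is a fortiori a homomorphism $\bA\to\bB$ of the reducts. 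Without this padding trick there is no evident way, at this point of the paper, to produce a weakly homomorphism-homogeneous $\bB^{*}$ over the original signature together with a homomorphism $\bB^{*}\to\bB$ (the core results of Sections~\ref{sec:cores-homom-homog} and~\ref{sec:omega-categ-substr} would supply one, but they themselves depend on this proposition).
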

\begin{proof}
  First we expand the signature by all positive existential
  formulae. $\widehat{\bB}$ shall be the structure obtained from $\bB$ by
  expansion by all positive existential definitions. 

  We also expand $\bA$ to a new structure $\widehat{\bA}$ over the new
  signature. However, we interpret each additional relational symbol
  as the empty relation in $\widehat{\bA}$. 

  With this setting it is clear, that
  $\Age(\widehat{\bA})\to\Age(\widehat{\bB})$. Moreover, in $\widehat{\bB}$, every
  positive existential formula is equivalent to a quantifier-free
  positive formula. Hence, observing that $\widehat{\bB}$ is weakly
  oligomorphic, and using Proposition~\ref{MainThm}, we conclude that $\widehat{\bB}$ is weakly homomorphism- homogeneous. 

  By Proposition~\ref{prop:agehom}, it follows that there is a
  homomorphism from $h:\widehat{\bA}\to\widehat{\bB}$. Clearly, $h$ is also a
  homomorphism from $\bA$ to $\bB$.
\end{proof}

\section{Cores of homomorphism-homogeneous structures}
\label{sec:cores-homom-homog}

A finite relational structure is called a \emph{core} if each of its
endomorphisms is an automorphism. There are many ways to
generalize the definition of a core to infinite structures. Several
possibilities were explored in \cite{Bau95}. For us, the following
definition seems most reasonable: 
\begin{definition}
  A relational structure $\bC$ is called a core if every endomorphism
  of $\bC$ is an embedding.

  We say  \emph{$\bC$ is a core of $\bA$} (or $\bA$ has a core $\bC$) if $\bC$ is a core,
  $\bC\le\bA$, and there is an endomorphism $f$ of $\bA$ such that
  $\img(f)\subseteq\bC$. 
\end{definition}
For finite relational structures, the core always exist and is unique,
up to isomorphism. For infinite structures a core may exist or may not
exist. Moreover, if it exists, it may not be unique up to isomorphism.  

In this section we will employ the machinery, that was developed in
the previous section in order to study cores of
homomorphism-homogeneous structures. The crucial definition in this
section is that of a 
hom-irreducible element in some class of structures:
\begin{definition}
Let $\cC$ be a class of relational structures over the same signature
and let $\bA\in \cC$. We say that $\bA$ is \textit{hom-irreducible in
  $\cC$} if for every $\bB\in \cC$  and every homomorphism
$f:\bA\rightarrow\bB$ holds that $f$ is an embedding.
\end{definition}

For a relational structure $\bA$, by $\cC_{\bA}$ we denote the class
of all finite structures of the same type like $\bA$ that are
hom-irreducible in the age of $\bA$.

\begin{lemma}\label{LEM:AP}
	Let $\cC$ be a homo-amalgamation class, and  let $\cD$ be the class of all structures from $\cC$ that are hom-irreducible in $\cC$. If $\cC\to\cD$, then $\cD$ is a \Fraisse{} class, i.e.\ it has the HP, and the AP.
\end{lemma}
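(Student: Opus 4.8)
The plan is to verify the two properties directly. For the HP, I would argue that hom-irreducibility in $\cC$ is inherited by substructures: if $\bA \in \cD$ and $\bB \le \bA$ with $\bB \in \cC$ (which we get from the HP of $\cC$), then any homomorphism $f : \bB \to \bE$ with $\bE \in \cC$ must be an embedding. To see this I would amalgamate: apply the HAP of $\cC$ to the inclusion $\bB \hookrightarrow \bA$ and the homomorphism $f : \bB \to \bE$ — wait, the HAP as stated takes a homomorphism $f_1 : \bA' \to \bBB$ and an embedding $f_2 : \bA' \to \bBBB$; so I set $\bA' := \bB$, $\bBB := \bE$, $f_1 := f$, $\bBBB := \bA$, $f_2 := $ inclusion. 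This produces $\bC \in \cC$, an embedding $g_1 : \bE \hookrightarrow \bC$, and a homomorphism $g_2 : \bA \to \bC$ with $g_1 \circ f = g_2 \restr_{\bB}$. Since $\bA$ is hom-irreducible in $\cC$, $g_2$ is an embedding, hence so is its restriction $g_2\restr_{\bB} = g_1 \circ f$, and since $g_1$ is a (mono)embedding, $f$ must be injective; reflecting relations follows because $g_1 \circ f$ reflects them and $g_1$ does too. So $\bD$ has the HP. (Isomorphism-closure of $\cD$ is immediate.)

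For the AP, suppose $\bA, \bBB, \bBBB \in \cD$ with embeddings $f_1 : \bA \hookrightarrow \bBB$ and $f_2 : \bA \hookrightarrow \bBBB$. Using the HAP of $\cC$ (with $f_1$ now playing the role of both an embedding and a homomorphism), I obtain $\bE \in \cC$, an embedding $g_1 : \bBB \hookrightarrow \bE$, and a homomorphism $g_2 : \bBBB \to \bE$ making the square commute. Now I use the hypothesis $\cC \to \cD$: there is $\bC \in \cD$ and a homomorphism $h : \bE \to \bC$. Then $h \circ g_1 : \bBB \to \bC$ is a homomorphism between members of $\cD$, hence an embedding (by hom-irreducibility of $\bBB$); similarly $h \circ g_1 \circ f_1 : \bA \to \bC$ is an embedding, and $h \circ g_2 \circ f_2 = h \circ g_1 \circ f_1$ is therefore an embedding, which forces $h \circ g_2 : \bBBB \to \bC$ to be an embedding (injectivity: if it identified two points, precomposition would still... actually I need: $h\circ g_2$ is a homomorphism from $\bBBB \in \cD$ into $\bC \in \cD$, so by hom-irreducibility of $\bBBB$ it is an embedding directly). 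Thus $g_1' := h \circ g_1 : \bBB \hookrightarrow \bC$ and $g_2' := h \circ g_2 : \bBBB \hookrightarrow \bC$ are embeddings, and $g_1' \circ f_1 = h \circ g_1 \circ f_1 = h \circ g_2 \circ f_2 = g_2' \circ f_2$, so the amalgamation square commutes. Hence $\bD$ has the AP.

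The step that needs the most care is making sure that at each stage the structures into which we map actually lie in $\cC$ (so that hom-irreducibility applies) and in $\cD$ (so that the output embeddings are genuine embeddings, not merely homomorphisms) — this is precisely where the hypothesis $\cC \to \cD$ is essential: without it the HAP only hands us a structure $\bE$ in $\cC$, and the homomorphisms into $\bE$ need not be embeddings, so the "amalgam" would fail to be an embedding-amalgam. The bookkeeping that "a homomorphism which becomes an embedding after post-composition with another embedding was already an embedding" (used repeatedly above) is the routine lemma I would state once at the start and invoke freely. I expect no genuine obstacle beyond organizing these diagram chases cleanly.
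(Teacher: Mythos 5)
Your proof is correct and follows essentially the same route as the paper's: for HP, apply the HAP to the inclusion $\bB\injto\bA$ and the given homomorphism and use hom-irreducibility of $\bA$; for AP, apply the HAP to the two embeddings and then push the resulting structure into $\cD$ via $\cC\to\cD$, using hom-irreducibility to see the composites are embeddings. One small quibble with your closing commentary only: the maps into the HAP-output $\bE$ are in fact already embeddings (by hom-irreducibility of $\bBB$ and $\bBBB$ in $\cC$); the hypothesis $\cC\to\cD$ is needed solely because $\bE$ itself need not lie in $\cD$.
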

\begin{proof}
	\begin{itemize}
	\item[(HP)] Let $\bA\in \cD$, and let $\bB$ be a substructure of $\bA$ (in particular, $\bB\in\cC$). Let $\bC\in\cC$, and let $f:\bB\to\bC$ be any homomorphism. Then, since $\cC$ has the HAP and is isomorphism-closed, we have that there exist a $\bD\in \cC$, and a homomorphism $g:\bA\rightarrow \bD$ such that the following diagram commutes: 
	\[
	\begin{psmatrix}
		\bB & \bA\\
		\bC & \bD
		\everypsbox{\scriptstyle}
		\ncline{H->}{1,1}{1,2}^{\leq}
		\ncline{H->}{2,1}{2,2}^{\leq}
		\ncline{->}{1,1}{2,1}<{f}
		\ncline{->}{1,2}{2,2}<{g}
	\end{psmatrix}
	\]
	Since $\bA$ is hom-irreducible in $\cC$, it follows that $g$ is an embedding, and that $f$, being a restriction of $g$ to $\bB$, is an embedding, too. We conclude now that $\bB$ is hom-irreducible in $\cC$, and, hence, $\cD$ has the HP. 
	\item[(AP)] Let $\bA,\bB,\bC\in \cD\subseteq \cC$ and let $f_1:\bA\injto \bC$ and $f_2:\bA\injto\bB$ be embeddings. Then, by the HAP of $\cC$, there exist a $\bD\in \cC$, an embedding  $g_1:\bB\injto\bD$ and a homomorphism $g_2:\bC\to\bD$ such that the following diagram commutes:
	\[
	\begin{psmatrix}
		\bA & \bC\\
		\bB & \bD
		\everypsbox{\scriptstyle}
		\ncline{H->}{1,1}{1,2}^{f_1}
		\ncline{H->}{2,1}{2,2}^{g_1}
		\ncline{H->}{1,1}{2,1}<{f_2}
		\ncline{->}{1,2}{2,2}<{g_2}
	\end{psmatrix}
	\]
	Since $\bC\in \cD$, it follows that $g_2$ is an embedding, too. Using that $\cC\to\cD$, we obtain that there exist a structure $\widehat{\bD}\in\cD$ and a homomorphism $h:\bD\rightarrow\widehat{\bD}$. Then $\widehat{\bD}$ will be the amalgam, with $h\circ g_1$ and $h\circ g_2$ as embeddings. 
	\end{itemize}
\end{proof}

\begin{lemma}\label{hipp}
	Let $\bA$ be a weakly oligomorphic, weakly homomorphism-homogeneous relational structure. Then $\Age(\bA)\to \cC_{\bA}$.
\end{lemma}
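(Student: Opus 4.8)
The plan is to produce, for each $\bB\in\Age(\bA)$, a homomorphism from $\bB$ onto a finite substructure of $\bA$ that is hom-irreducible in $\Age(\bA)$. I would begin with two routine reductions. Since $\to$ is invariant under isomorphism I may assume $\bB\leq\bA$, and I fix an enumeration $\bar{b}=(b_1,\dots,b_m)$ of its carrier. Moreover, a finite substructure $\bC\leq\bA$ is hom-irreducible in $\Age(\bA)$ if and only if every homomorphism $\bC\to\bA$ is an embedding: any homomorphism $\bC\to\bD$ with $\bD\in\Age(\bA)$ factors through an embedding $\bD\injto\bA$, and conversely $\img$ of a homomorphism $\bC\to\bA$ is a finite substructure of $\bA$, hence a member of $\Age(\bA)$; the property of being an embedding passes back and forth along these factorizations.

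The idea is to use, as an invariant of a tuple $\bar{a}\in A^m$, the relation $\pTpZ_\bA(\bar{a})^\bA$ that its quantifier-free positive type defines in $\bA$. Since quantifier-free positive formulae are in particular positive existential, $\pTpZ_\bA(\bar{a})$ is a positive existential type, so every relation $\pTpZ_\bA(\bar{a})^\bA$ is definable by a positive existential type; by weak oligomorphy there are only finitely many $m$-ary relations of this kind. I would then isolate two elementary observations. (i) The assignment $b_i\mapsto a_i$ is a homomorphism $\bB\to\bA$ precisely when $\bar{a}\in\varrho:=\pTpZ_\bA(\bar{b})^\bA$, equivalently $\pTpZ_\bA(\bar{b})\subseteq\pTpZ_\bA(\bar{a})$ (one direction is that homomorphisms preserve positive existential formulae, the other is a direct check on relation atoms). (ii) If $h$ is a homomorphism between finite substructures of $\bA$ and $\bar{a}'=(h(a_1),\dots,h(a_m))$ is the image of a tuple $\bar{a}$ from its domain, then $\pTpZ_\bA(\bar{a})\subseteq\pTpZ_\bA(\bar{a}')$, hence $\pTpZ_\bA(\bar{a}')^\bA\subseteq\pTpZ_\bA(\bar{a})^\bA$.

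Now comes the main point. Using the finiteness guaranteed by weak oligomorphy, I would choose $\bar{c}\in\varrho$ for which $\pTpZ_\bA(\bar{c})^\bA$ is $\subseteq$-minimal among $\{\pTpZ_\bA(\bar{a})^\bA\mid\bar{a}\in\varrho\}$, and let $\bC$ be the substructure of $\bA$ induced by the entries of $\bar{c}$; by (i) there is a homomorphism $\bB\to\bC$. It remains to check that $\bC$ is hom-irreducible in $\Age(\bA)$. Let $h:\bC\to\bA$ be an arbitrary homomorphism and set $\bar{c}'=(h(c_1),\dots,h(c_m))$. By (ii), $\pTpZ_\bA(\bar{c}')^\bA\subseteq\pTpZ_\bA(\bar{c})^\bA\subseteq\varrho$, and since $\bar{c}'\in\pTpZ_\bA(\bar{c}')^\bA$ we get $\bar{c}'\in\varrho$; minimality now forces $\pTpZ_\bA(\bar{c}')^\bA=\pTpZ_\bA(\bar{c})^\bA$. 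As $\bar{c}$ lies in this common relation, it satisfies every formula of $\pTpZ_\bA(\bar{c}')$, so $\pTpZ_\bA(\bar{c}')\subseteq\pTpZ_\bA(\bar{c})$; combined with (ii) this yields $\pTpZ_\bA(\bar{c})=\pTpZ_\bA(\bar{c}')$. Reading this off on atomic formulae (relation atoms and equalities) shows that $h$ is injective and reflects all relations, hence is an embedding. By the reduction, $\bC\in\cC_{\bA}$; since also $\bB\to\bC$, this proves $\Age(\bA)\to\cC_{\bA}$.

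The crux is the choice of invariant: recognising that it is minimality of the quantifier-free positive type-relation $\pTpZ_\bA(\bar{a})^\bA$ — not, say, minimality of carrier size — that forces the induced substructure to be hom-irreducible, and that weak oligomorphy is precisely the hypothesis ensuring such a minimal tuple exists inside $\varrho$. The remaining work is bookkeeping: relating homomorphisms into an arbitrary member of $\Age(\bA)$ to homomorphisms into $\bA$, and verifying reflection of relations (and of equalities) rather than mere injectivity. Weak homomorphism-homogeneity is not needed for this particular argument beyond being part of the standing hypotheses; alternatively one could run the same proof over the language expanded by the quantifier-free positive definitions afforded by Proposition~\ref{MainThm}.
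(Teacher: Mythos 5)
Your proof is correct, and it is in essence the paper's argument viewed through a slightly different lens, with one genuine improvement. The paper quasiorders $m$-tuples by $\bar{a}\le\bar{b}$ iff $a_i\mapsto b_i$ is a local homomorphism --- which is exactly your relation $\pTpZ_\bA(\bar{a})\subseteq\pTpZ_\bA(\bar{b})$ --- and sends $\bar{b}$ to a maximal tuple $\bar{b}^{\sharp}$, i.e.\ one whose type-defined relation is $\subseteq$-minimal; so the construction is the same. The difference lies in how finiteness (hence existence of a maximal/minimal element) is obtained: the paper invokes Proposition~\ref{MainThm} to get endolocality and identifies the positive-existentially definable relations with the filters of the quasiorder, which uses weak homomorphism-homogeneity, whereas you observe directly that each $\pTpZ_\bA(\bar{a})^\bA$ is defined by a positive existential type and that $\bar{a}\mapsto\pTpZ_\bA(\bar{a})^\bA$ separates equivalence classes, so weak oligomorphy alone suffices. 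Your version therefore proves the stronger statement that $\Age(\bA)\to\cC_\bA$ for \emph{every} weakly oligomorphic $\bA$, and you also spell out the verification that the substructure induced by the minimal tuple is hom-irreducible (injectivity and reflection of relations from equality of quantifier-free positive types), a point the paper asserts without detail. The only convention you lean on is that equality atoms belong to the quantifier-free positive type, which is needed for the injectivity step; this is the standard reading and is consistent with how the paper uses local homomorphisms between tuples elsewhere, e.g.\ in the proof of Proposition~\ref{prop:agehom}.
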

\begin{proof}
	Consider $A^n$ and note that all tuples from $A^n$ can be quasiordered by $\bar{a}\leq_n\bar{b}$ if and only if there is a local homomorphism of $\bA$ that maps $\bar{a}$ to $\bar{b}$.

	Since $\bA$ is weakly oligomorphic, by Proposition~\ref{MainThm}, $\bA$ is endolocal. Hence the set of $n$-ary relations on $\bA$, definable by positive existential types, coincides with the filters of $(A^n,\le_n)$. It follows that the equivalence relation $\leq_n\cap \geq_n$ has finitely many equivalence classes. In particular, every properly ascending chain in $(A^n,\le_n)$ is finite and every tuple $\bar{a}$ lies below a maximal tuple $\bar{a}^{\sharp}$.

	Let $\bB$ be a finite substructure of $\bA$ with $B=\{b_1,\dots,b_n\}$.  Define $\bar{b}:=(b_1,\dots,b_n)$ and let $\bar{b}^{\sharp}$ be a maximal tuple above $\bar{b}$, say $\bar{b}^{\sharp}=(b_1^{\sharp},\dots ,b_n^{\sharp})$. Let, further, $\bD$ be the substructure of $\bA$ induced by $\{b_1^{\sharp},\dots,b_n^{\sharp}\}$. We define $f:\bB\rightarrow\bD:b_i\mapsto b_i^{\sharp}$, for $i=1,\dots, n$. Then $f$ is an epimorphism and $\bD$ is hom-irreducible in $\Age(\bA)$.  
\end{proof}

\begin{proposition}\label{retractcore}
	Let $\bA$ be a countable homomorphism-homogeneous relational structure, such that $\Age (\bA)\to\cC_{\bA}$. Then $\bA$ has a core $\bC$ with age $\cC_{\bA}$.
\end{proposition}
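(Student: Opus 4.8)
The idea is to realize $\bC$ as the image of a well-chosen endomorphism of $\bA$, and then to identify that image with a $\preceq_h$-minimal homomorphism-homogeneous structure whose age is $\cC_{\bA}$. Concretely, since $\bA$ is countable and homomorphism-homogeneous, it is weakly homomorphism-homogeneous; moreover (as it has a finite signature — or more precisely, because we are assuming $\Age(\bA)\to\cC_{\bA}$, which is exactly the hypothesis of Lemma~\ref{hipp} situations) one checks $\bA$ is weakly oligomorphic is \emph{not} assumed here, so I should be careful: the hypothesis given is only $\Age(\bA)\to\cC_{\bA}$. So the first step is to pass to $\cC_{\bA}$ itself. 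By definition $\cC_{\bA}$ is the class of finite structures in $\Age(\bA)$ that are hom-irreducible in $\Age(\bA)$. Since $\Age(\bA)$ has the HAP (Theorem on ages of homomorphism-homogeneous structures, part (\ref{item:1})) and $\Age(\bA)\to\cC_{\bA}$, Lemma~\ref{LEM:AP} applies and tells us $\cC_{\bA}$ is a \Fraisse{} class. Hence by \Fraisse's Theorem there is a countable homogeneous structure $\bC$ with $\Age(\bC)=\cC_{\bA}$, and it is unique up to isomorphism; since homogeneous implies homomorphism-homogeneous, $\bC$ is in particular (weakly) homomorphism-homogeneous.

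The second step is to show $\bC$ is a core in the sense of the paper, i.e. every endomorphism of $\bC$ is an embedding. This is where hom-irreducibility pays off: if $e\in\End(\bC)$ and $\bB\le\bC$ is any finite substructure, then $e\restr_\bB:\bB\to\bC$ is a homomorphism into a structure whose age is $\cC_{\bA}$, and $\bB$, being an element of $\Age(\bC)=\cC_{\bA}$, is hom-irreducible in $\cC_{\bA}$; one has to check that hom-irreducibility of $\bB$ \emph{in $\cC_{\bA}$} (as opposed to in $\Age(\bA)$) forces $e\restr_\bB$ to be an embedding. Here I would argue: $e(\bB)$ is a finite substructure of $\bC$, hence lies in $\cC_{\bA}$, so $e\restr_\bB$ is a homomorphism between two members of $\cC_{\bA}$, and the definition of $\cC_{\bA}\subseteq\Age(\bA)$ together with hom-irreducibility of $\bB$ in $\Age(\bA)$ gives that $e\restr_\bB$ is an embedding. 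Since embeddings are detected on finite substructures ($e$ preserves and reflects all relations, and is injective, because these are witnessed finitely), $e$ itself is an embedding. So $\bC$ is a core.

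The third step is to produce the endomorphism $f$ of $\bA$ with $\img(f)\subseteq\bC$ witnessing that $\bC$ is a core \emph{of} $\bA$. For this I need $\bC\le\bA$ and a homomorphism $\bA\to\bC$ that is the identity on $\bC$. First, $\Age(\bC)=\cC_{\bA}\subseteq\Age(\bA)$, and $\bA\to\cC_{\bA}$ combined with $\cC_{\bA}\subseteq\Age(\bA)$ gives $\Age(\bA)\to\Age(\bC)$; since $\bA$ is countable and $\bC$ is weakly homomorphism-homogeneous, Proposition~\ref{prop:agehom} (in fact its finite-signature-free form) yields a homomorphism $\bA\to\bC$ — wait, Proposition~\ref{prop:agehom} also needs $\bB$ weakly oligomorphic, which here is $\bC$; a homogeneous structure over a possibly infinite signature need not be weakly oligomorphic. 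So instead I would appeal to the Proposition on $\preceq_h$ (part (\ref{item:4})): since $\bC$ is weakly homomorphism-homogeneous and $\Age(\bC)\subseteq\Age(\bA)$, embed a copy of $\bC$ into $\bA$ using a back-and-forth along $\cC_{\bA}$ being cofinal in $\Age(\bA)$ under $\to$; then $\bA\preceq_h\bC$ in the appropriate sense gives the retraction. Concretely: build, by a chain argument as in Theorem~\ref{item:2}'s construction restricted to $\cC_{\bA}$, a copy $\bC\le\bA$ realizing $\cC_{\bA}$ as an age (possible because $\bA\to\cC_{\bA}$ makes every finite substructure of $\bA$ homomorphically map into $\cC_{\bA}$, and by HAP one iterates), and simultaneously build the endomorphism $f:\bA\to\bC$ by extending finite partial homomorphisms step by step, using homomorphism-homogeneity of $\bA$ to keep extending, and using that images land in hom-irreducible — hence "maximal" — substructures to force the image into $\bC$. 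Finally I would invoke the Corollary after the $\preceq_h$-Proposition to get uniqueness of $\bC$ up to isomorphism among homogeneous structures with age $\cC_{\bA}$.

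The main obstacle is the simultaneous construction in step three: one must interleave (a) realizing every member of $\cC_{\bA}$ as a substructure of the image so that the image's age is exactly $\cC_{\bA}$, (b) keeping the image hom-irreducible — equivalently inside a fixed homogeneous model of $\cC_{\bA}$ sitting inside $\bA$ — and (c) defining $f$ coherently on all of $A$ as a genuine endomorphism that is the identity on $\bC$. The cleanest route is probably to first fix $\bC\le\bA$ as \emph{any} copy of the \Fraisse{} limit of $\cC_{\bA}$ (obtained by a routine back-and-forth embedding, available since $\cC_{\bA}\subseteq\Age(\bA)$ and $\cC_{\bA}$ has JEP and amalgamation), and then construct $f:\bA\to\bC$ alone by a K\H{o}nig's-lemma / tower argument in the spirit of Proposition~\ref{prop:agehom}: enumerate $A$, and at stage $n$ extend the partial homomorphism on $\bA_n$ to $\bA_{n+1}$ landing in $\bC$, using Lemma~\ref{hipp}'s "every tuple lies below a maximal one" to push images into hom-irreducible position and homomorphism-homogeneity of $\bC$ to extend; the compatibility bookkeeping (ensuring the extensions cohere into a single homomorphism, and that restricting $f$ to $\bC$ can be taken to be the identity after composing with a suitable automorphism of $\bC$) is the delicate part, but it is morally the same bookkeeping already carried out in the proof of Proposition~\ref{prop:agehom}.
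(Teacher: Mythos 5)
Your plan assembles the right ingredients (hom-irreducibility of the finite substructures of the prospective core forces every endomorphism to be an embedding; $\cC_{\bA}$ is an amalgamation class via Lemma~\ref{LEM:AP}; a copy of a structure with age $\cC_{\bA}$ can be found inside $\bA$), but the central step --- producing the retraction $f:\bA\to\bC$ --- is exactly where the proof lives, and the route you point to does not work as stated. You propose to build $f$ ``by a K\H{o}nig's-lemma / tower argument in the spirit of Proposition~\ref{prop:agehom}'', but that proposition's argument hinges on the codomain being \emph{weakly oligomorphic}: weak oligomorphy is what makes the tree of equivalence classes of partial homomorphisms finitely branching, so that K\H{o}nig's lemma applies. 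Proposition~\ref{retractcore} does not assume weak oligomorphy of $\bA$ (and hence not of $\bC$), so this machinery is unavailable. The naive alternative also fails: a partial homomorphism $h:\bA_n\to\bC$ is a local homomorphism of $\bA$, so it extends to $\bA_{n+1}\to\bA$ by homomorphism-homogeneity of $\bA$, but nothing forces the extension to land back inside $\bC$; and $h$ is not a local homomorphism \emph{of} $\bC$ (its domain need not lie in $\bC$), so homomorphism-homogeneity of $\bC$ cannot be invoked to extend it either. Declaring the remaining ``compatibility bookkeeping'' to be morally the same as in Proposition~\ref{prop:agehom} therefore leaves the essential difficulty unaddressed.

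The paper resolves this by \emph{not} fixing $\bC$ in advance. It builds a tower of epimorphisms $\varepsilon_i:\bA_i\twoheadrightarrow\bC_i$ with $\bC_i\in\cC_{\bA}$ and $\bC_i\le\bC_{i+1}\le\bA$ simultaneously with their images: at each step one first extends $\varepsilon_i$ to $\bA_{i+1}$ using homomorphism-homogeneity of $\bA$, then pushes the image onto a hom-irreducible structure using $\Age(\bA)\to\cC_{\bA}$, and then invokes a straightening lemma (Lemma~\ref{lemcl1}) to replace that image by an isomorphic copy $\bC_{i+1}$ that genuinely extends $\bC_i$ inside $\bA$ and makes the maps cohere. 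The core $\bC$ is then \emph{defined} as $\bigcup_i\bC_i=\img(\varepsilon)$; it need not be the \Fraisse{} limit of $\cC_{\bA}$ (that identification is Theorem~\ref{hh-core}, which does assume weak oligomorphy --- your proposal conflates the two statements). To repair your write-up you would need to either reproduce this simultaneous construction, or prove a substitute for Lemma~\ref{lemcl1} showing that a partial homomorphism into your fixed \Fraisse{} limit $\bF\le\bA$ can always be extended one point at a time so that the extension again lands in $\bF$ and restricts to the identity on the part already in $\bF$; neither is routine bookkeeping, and neither is supplied.
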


Before coming to the proof of Proposition~\ref{retractcore}, we need to prove a technical lemma:
\begin{lemma}\label{lemcl1} 
	Let $\bA$ be a weakly homomorphism-homogeneous relational structure, and let $\bD\leq \bA$ be hom-irreducible in $\Age(\bA)$. Further let $\widetilde{\bD}$ be a finite superstructure of $\bD$ in $\bA$. Finally, let $\widehat{\bD}\leq\bA$ be hom-irreducible in $\Age(\bA)$, and let $f:\widetilde{\bD}\twoheadrightarrow \widehat{\bD}$. Then there exists a finite substructure $\bF\leq \bA$, and an isomorphism $g:\widehat{\bD}\rightarrow \bF$ such that $\bD\leq \bF$ and the following diagram commutes: 
	\begin{equation}\label{eq:1}
\raisebox{\dimexpr-\height+\ht\strutbox}{%
\ensuremath{\begin{psmatrix}
			\widetilde{\bD} & \widehat{\bD}\\
			\bD & \bF
			\everypsbox{\scriptstyle}
			\ncline{->>}{1,1}{1,2}^{f}
			\ncline{H->}{2,1}{2,2}^{\leq}
			\ncline{<-H}{1,1}{2,1}<{\leq}
			\ncline{H->}{1,2}{2,2}<{g}>{\cong}
		\end{psmatrix}}}
	\end{equation}
\end{lemma}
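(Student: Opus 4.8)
The plan is to use the weak homomorphism-homogeneity of $\bA$ to "lift" the epimorphism $f\colon\widetilde{\bD}\epito\widehat{\bD}$ through the inclusion $\bD\leq\widetilde{\bD}$, producing an isomorphic copy $\bF$ of $\widehat{\bD}$ sitting above $\bD$ inside $\bA$. First I would recall that, since $\bD$ is hom-irreducible in $\Age(\bA)$ and $\bD\leq\widetilde\bD\leq\bA$, the composite $f\restr_{\bD}\colon\bD\to\widehat{\bD}$, viewed as a homomorphism of finite substructures of $\bA$ (using $\widehat{\bD}\leq\bA$), must be an embedding. Let $\bD_0:=f(\bD)\leq\widehat{\bD}$, so $f\restr_{\bD}\colon\bD\xrightarrow{\cong}\bD_0$ is an isomorphism. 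The idea now is to transport the whole configuration back to the copy $\bD$: we want an embedding of $\widehat{\bD}$ into $\bA$ that restricts on $\bD_0$ to the inverse isomorphism $(f\restr_{\bD})^{-1}\colon\bD_0\xrightarrow{\cong}\bD$.

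Second, I would invoke weak homomorphism-homogeneity of $\bA$ in the following form: $\bD_0\leq\widehat{\bD}$ are finite substructures of $\bA$ (since $\widehat{\bD}\leq\bA$), and $(f\restr_{\bD})^{-1}\colon\bD_0\to\bD\leq\bA$ is a local homomorphism; hence it extends to a homomorphism $g_0\colon\widehat{\bD}\to\bA$ with $g_0\restr_{\bD_0}=(f\restr_{\bD})^{-1}$. Now set $\bF:=\img(g_0)$ and $g:=g_0$ corestricted to $\bF$. Since $\widehat{\bD}$ is hom-irreducible in $\Age(\bA)$, the homomorphism $g_0\colon\widehat{\bD}\to\bA$ — whose image lies in $\Age(\bA)$ — is an embedding, so $g\colon\widehat{\bD}\xrightarrow{\cong}\bF$ is an isomorphism. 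Moreover $\bD=g_0(\bD_0)\subseteq\img(g_0)=F$, and since both $\bD$ and $\bF$ are substructures of $\bA$ with $D\subseteq F$, we get $\bD\leq\bF\leq\bA$.

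Third, I would check that the square~\eqref{eq:1} commutes. On $\bD$ we must verify $g(f(d))=d$ for $d\in D$: indeed $f(d)\in D_0$, so $g(f(d))=g_0(f(d))=(f\restr_{\bD})^{-1}(f(d))=d$, which is exactly the identical embedding $\bD\leq\bF$ followed by nothing, i.e.\ $g\circ f\restr_{\bD}$ equals the inclusion $\bD\hookrightarrow\bF$. This is precisely commutativity of the diagram, since the left vertical arrow is the inclusion $\bD\leq\widetilde{\bD}$, the top arrow is $f$, the right vertical arrow is $g$, and the bottom arrow is the inclusion $\bD\leq\bF$.

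**Main obstacle.** The only delicate point is making sure the two applications of hom-irreducibility are legitimate: for $\bD$ we need $f\restr_{\bD}$ to be a genuine homomorphism \emph{into a member of $\Age(\bA)$}, which is fine because $\widehat{\bD}\leq\bA$ so $f\restr_{\bD}\colon\bD\to\widehat{\bD}$ with $\widehat{\bD}\in\Age(\bA)$; and for $\widehat{\bD}$ we need the extension $g_0$ to have image in $\Age(\bA)$, which holds automatically since $\img(g_0)\leq\bA$ is finite. A secondary subtlety is the bookkeeping that "$\bD\leq\bF$" as \emph{induced} substructures rather than merely as a set inclusion, but this follows because both are induced substructures of $\bA$ and $D\subseteq F$. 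Once these points are in place, the verification of commutativity is the short computation above.
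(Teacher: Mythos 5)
Your proof is correct and follows essentially the same route as the paper: restrict $f$ to $\bD$, observe by hom-irreducibility of $\bD$ that this restriction is an isomorphism onto its image $f(\bD)$, extend the inverse isomorphism to a homomorphism on $\widehat{\bD}$ using weak homomorphism-homogeneity, and use hom-irreducibility of $\widehat{\bD}$ to conclude the extension is an embedding onto the desired $\bF$, with the same one-line commutativity check. The only difference is notational (your $\bD_0$ and $g_0$ versus the paper's $f(\bD)$ and $\bar f^{-1}$), plus your slightly more careful bookkeeping about codomains lying in $\Age(\bA)$, which the paper leaves implicit.
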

\begin{proof}
	Consider the mapping $\bar{f}$ given by the following diagram:
	\[
	\begin{psmatrix}
		& f(\bD)&\\
		\bD &  &\widehat{\bD}
		\everypsbox{\scriptstyle}
		\ncline{->>}{2,1}{1,2}^{\bar{f}}
		\ncline{H->}{1,2}{2,3}^{\leq}
		\ncline{->}{2,1}{2,3}^{f\restr_{\bD}}
	\end{psmatrix}
	\] 
	and note that $\bar{f}$ is an isomorphism because $\bD$ is hom-irreducible in $\Age(\bA)$.

	Since $\bA$ is weakly homomorphism-homogeneous, it follows that $\bar{f}^{-1}$ extends to a homomorphism $g:\widehat{\bD}\to \bF$. $\bF$ can be chosen in such a way that $g$ is surjective. Note that since $\widehat{\bD}$ is hom-irreducible in $\Age(\bA)$, it follows that $g$ is an isomorphism. 
	\[
	\begin{psmatrix}
		\bD & \bF\\
		f(\bD) & \widehat{\bD}
		\everypsbox{\scriptstyle}
		\ncline{H->}{1,1}{1,2}^{\leq}
		\ncline{H->}{2,1}{2,2}^{\leq}
		\ncline{<-H}{1,1}{2,1}<{\bar{f}^{-1}}
		\ncline{H->}{2,2}{1,2}<{g}
	\end{psmatrix}
	\] 
	Let $d\in D$. Then $g(f(d))=g(\bar{f}(d))=d$, since $\bar{f}(d)\in f(D)$ and $g\restr_{f(\bD)}=\bar{f}^{-1}$. Hence, the diagram \eqref{eq:1} commutes.
\end{proof}

\begin{proof}[Proof of Proposition~\ref{retractcore}.]
	Take all finite substructures of $\bA$, and denote them by $\bE_0,\bE_1,\bE_2,\dots$.

	We will show that a core exists by constructing an endomorphism whose image has an age contained in $\cC_\bA$. This endomorphism will be obtained as the union of a tower of local homomorphisms $\varepsilon_i:\bA_i\epito\bC_i$, where $\bC_i\in\cC_{\bA}$: 
	\begin{description}
		\item[Induction basis] Define $\bA_0:=\bE_0$. Then, by assumptions, there exist a $\bC_0\in \cC_{\bA}$ and an epimorphism $\varepsilon_0:\bA_0\twoheadrightarrow \bC_0$.
		\item[Induction step] Suppose that we have constructed $\varepsilon_i:\bA_i\epito\bC_i$. Define $\bA_{i+1}:=\bA_i\cup\bE_{i+1}$. Since $\bA$ is weakly homomorphism-homogeneous, there exist a $\bD\geq \bC_i$ and an epimorphism $e:\bA_{i+1}\twoheadrightarrow\bD$ such that the following diagram commutes:
		\[
		\begin{psmatrix}
			\bA_{i+1} & \bD\\
			\bA_i & \bC_i
			\everypsbox{\scriptstyle}
			\ncline{->>}{1,1}{1,2}^{e}
			\ncline{->>}{2,1}{2,2}^{\varepsilon_i}
			\ncline{<-H}{1,1}{2,1}<{\leq}
			\ncline{H->}{2,2}{1,2}<{\leq}
		\end{psmatrix}
		\] 
		Since $\Age(\bA)\to\cC_\bA$, there exists an epimorphism $\hat{f}$ from $\bD$ to a substructure $\widehat{\bD}$ of $\bA$ that is hom-irreducible in $\Age(\bA)$. By Lemma~\ref{lemcl1}, there exists a structure $\bC_{i+1}\ge\bC_i$ that is hom-irreducible in $\Age(\bA)$, and there exists an isomorphism $g:\widehat{\bD}\to \bC_{i+1}$ such that the following diagram commutes:
		\[
		\begin{psmatrix}
			\bD & \widehat{\bD}\\
			\bC_i & \bC_{i+1} 
			\everypsbox{\scriptstyle}
			\ncline{->>}{1,1}{1,2}^{\hat{f}}
			\ncline{H->}{2,1}{1,1}<{\leq}
			\ncline{H->}{2,1}{2,2}^{\leq}
			\ncline{->}{1,2}{2,2}<{g}>{\cong}
		\end{psmatrix}
		\]
		The mapping $f:= g\circ \hat{f}$ is an epimorphism. Define $\varepsilon_{i+1}:=f\circ e$ and observe that $\varepsilon_{i+1}$ is an extension of $\varepsilon_i$. Note, further, that $\bA=\cup_{i\in\mathbb{N}} \bA_i$.

		Let $C:=\cup_{i\in\mathbb{N}} C_i$ and let $\bC\leq \bA$ be the  substructure of $\bA$ induced by $C$. Further, let $\varepsilon:=\cup_{i\in\mathbb{N}} \varepsilon_i$. Then $\varepsilon: \bA\twoheadrightarrow\bC$.
\end{description}

Instead of directly showing that $\bC$ is a core, we prove the 
stronger claim, that every homomorphism $f: \bC\rightarrow \bA$ is an embedding.

Suppose to the contrary that there exists $f:\bC\rightarrow \bA$ that
is not an embedding. Then either $f$ is not injective or $f$ is
injective, but not strong. 
\begin{description}
\item[Case 1] If $f$ is not injective, then there are $c,d\in \bC$
  such that $f(c)=f(d)$.

On the other hand, there exists an $i\in \mathbb{N}$ such that
$\{c,d\}\subseteq C_i$. Since $f\restr_{\bC_i}$ is a homomorphism and
$\bC_i$ is hom-irreducible in $\Age(\bA)$, it follows that $f\restr_{\bC_i}$ is an
embedding, and we arrive at a contradiction. 
\item[Case 2] If $f$ is injective, but not strong (i.e.\ if $f$ is a
  monomorphism, but not an embedding), then there exist a basic
  $n$-ary relation $\varrho$ of $\bA$ and a tuple
  $\bar{a}=(a_1,a_2,\dots, a_n)\in A^n\setminus \varrho$ such that
  $(f(a_1), f(a_2),\dots ,f(a_n))\in \varrho$. However, then there is
  an $i\in \mathbb{N}$ such that $\{a_1,a_2,\dots, a_n\}\subseteq
  C_i$, and $f\restr_{\bC_i}$ is an embedding, which is a
  contradiction. 
\end{description}
Summing up, we conclude that $f$ must be an embedding.

Let us finally show that $\Age(\bC)=\cC_{\bA}$:
By construction, $\Age(\bC)\subseteq\cC_{\bA}$. On the other hand, as
a core of $\bA$, every finite substructure of $\bA$ that is
hom-irreducible in $\Age(\bA)$ embeds into $\bC$. Hence
$\cC_{\bA}\subseteq \Age(\bC)$.
\end{proof}

\begin{corollary}\label{core}
  Every countable  weakly oligomorphic homomorphism-homogeneous
  structure $\bA$ has a core $\bC$ of age $\cC_{\bA}$.\qed
\end{corollary}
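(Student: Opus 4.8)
The plan is to obtain Corollary~\ref{core} as an essentially immediate consequence of Lemma~\ref{hipp} and Proposition~\ref{retractcore}, so the ``proof'' is really just a matter of checking that the hypotheses of those two results are met. Let $\bA$ be a countable weakly oligomorphic homomorphism-homogeneous structure. The first thing I would note is that since $\bA$ is countable, homomorphism-homogeneity and weak homomorphism-homogeneity coincide (this was observed right after the definition of weak homomorphism-homogeneity), so $\bA$ is in particular weakly homomorphism-homogeneous.

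Next I would invoke Lemma~\ref{hipp}, whose hypotheses are exactly ``weakly oligomorphic'' and ``weakly homomorphism-homogeneous'', both of which we have just verified. It yields $\Age(\bA)\to\cC_{\bA}$, i.e.\ every finite substructure of $\bA$ maps homomorphically onto a finite substructure of $\bA$ that is hom-irreducible in $\Age(\bA)$.

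Finally I would feed this into Proposition~\ref{retractcore}: its hypotheses are ``$\bA$ countable homomorphism-homogeneous'' and ``$\Age(\bA)\to\cC_{\bA}$'', both now in hand. The conclusion is precisely that $\bA$ has a core $\bC$ with $\Age(\bC)=\cC_{\bA}$, which is the statement of the corollary.

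There is no real obstacle here, since all the work has already been done in Lemma~\ref{hipp} (the weak-oligomorphy/endolocality argument producing maximal tuples and hence epimorphisms onto hom-irreducible structures) and in Proposition~\ref{retractcore} (the chain construction of the retraction $\varepsilon\colon\bA\epito\bC$ together with the hom-irreducibility argument showing every homomorphism out of $\bC$ is an embedding). The only point worth stating explicitly in the write-up is the reduction from homomorphism-homogeneity to weak homomorphism-homogeneity for countable $\bA$; everything else is a direct citation. Hence the corollary can be stated with a one-line proof, which is why it appears with \qed in the excerpt.
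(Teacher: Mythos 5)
Your proposal is correct and is exactly the intended argument: the paper states the corollary with an immediate \qed precisely because it follows by combining Lemma~\ref{hipp} (applicable since countability makes homomorphism-homogeneity and weak homomorphism-homogeneity coincide) with Proposition~\ref{retractcore}. Nothing further is needed.
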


Before coming to the main result of this section, we need to prove a
few auxiliary results:
\begin{lemma}\label{woo}
  Let $\bA$ be a relational structure, then the following are true:
  \begin{enumerate}[(a)]
  \item\label{item:10} If $\bA$ is weakly oligomorphic, then for every $n\in\mathbb{N}$, the
    class $\Age(\bA)$ contains up to isomorphism only finitely many
    structures of cardinality $n$.
  \item\label{item:11} If for every $n\in\mathbb{N}$, the
    class $\Age(\bA)$ contains up to isomorphism only finitely many
    structures of cardinality $n$, and $\bA$ is homomorphism-homogeneous, then $\bA$ is weakly oligomorphic.
  \end{enumerate}
\end{lemma}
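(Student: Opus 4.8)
The plan is to run both implications through one bookkeeping device. For a tuple $\bar a\in A^n$ write $R(\bar a):=\pTp_\bA(\bar a)^\bA$, the relation defined in $\bA$ by the positive existential type of $\bar a$, and record two elementary facts. First, since $\bar a\in R(\bar a)$, the equality $R(\bar a)=R(\bar b)$ already forces $\pTp_\bA(\bar a)=\pTp_\bA(\bar b)$ (each inclusion coming from $\bar a\in R(\bar b)$, resp.\ $\bar b\in R(\bar a)$), so $R$ separates the positive existential types of $n$-tuples; in particular, ``$\bA$ is weakly oligomorphic'' is equivalent to ``for each $n$ there are only finitely many positive existential types of $n$-tuples over $\bA$''. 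Second, a relational substructure is nothing but a record of which relations hold on which tuples of its elements, so the isomorphism type of a finite $\bB\leq\bA$ is determined by the \emph{atomic type} of any tuple enumerating $B$, and the atomic type of a tuple is in turn determined by its positive existential type (being coded by the atomic formulae in $\pTpZ_\bA(\bar a)$, together with the pattern of coincidences of entries). Finally, once $\Age(\bA)$ has only finitely many isomorphism types of structures of each cardinality $\leq n$, there are only finitely many atomic types of $n$-tuples, since such a type is coded by a partition of $\{1,\dots,n\}$ and a labeled structure on the quotient.

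For part (\ref{item:10}): as $\bar a$ ranges over $A^n$, the relation $R(\bar a)$ is definable by a positive existential type, hence by weak oligomorphy takes only finitely many values; by the first fact there are then only finitely many positive existential types, hence only finitely many atomic types, of $n$-tuples, and by the second fact only finitely many isomorphism types of $n$-element substructures of $\bA$. Thus $\Age(\bA)$ has only finitely many isomorphism types of structures of each cardinality $n$.

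For part (\ref{item:11}): the key step is that, when $\bA$ is homomorphism-homogeneous, $\pTp_\bA(\bar a)$ depends only on the atomic type of $\bar a$. Indeed, if $\bar a,\bar b\in A^m$ share the same atomic type, then $a_i\mapsto b_i$ is a well-defined isomorphism between the substructures of $\bA$ induced by the entries of $\bar a$ and of $\bar b$, in particular a local homomorphism of $\bA$; by homomorphism-homogeneity it extends to an endomorphism $g$ with $g(\bar a)=\bar b$, and since homomorphisms preserve positive existential formulae, $\pTp_\bA(\bar a)\subseteq\pTp_\bA(\bar b)$, the reverse inclusion coming from the inverse local isomorphism. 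By hypothesis $\Age(\bA)$ has only finitely many isomorphism types of cardinality $\leq m$, so there are only finitely many atomic types of $m$-tuples, hence — by the key step — only finitely many positive existential types $T_1,\dots,T_k$ of $m$-tuples over $\bA$. Now any $m$-ary relation $\varrho$ definable by a positive existential type satisfies $\varrho=\pTp_\bA(\varrho)^\bA$ with $\pTp_\bA(\varrho)=\bigcap_{\bar a\in\varrho}\pTp_\bA(\bar a)$, i.e.\ $\pTp_\bA(\varrho)$ is the intersection of a subfamily of $\{T_1,\dots,T_k\}$; there are only finitely many such intersections, so $\bA$ has only finitely many $m$-ary relations definable by positive existential types, i.e.\ it is weakly oligomorphic.

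I expect the only real friction to be the elementary-but-pedantic handling of atomic types — especially tuples with repeated entries, where the induced partition of the index set must be tracked (harmless, since there are only finitely many partitions of a finite set) — together with the two translation observations between ``finitely many positive existential types of tuples'' and ``finitely many definable relations of that arity''. The one genuinely new ingredient is the use of homomorphism-homogeneity in part (\ref{item:11}) to collapse positive existential types onto atomic types.
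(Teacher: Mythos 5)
Your proof is correct and follows essentially the same route as the paper's: in part (\ref{item:10}) both arguments send distinct isomorphism types of $n$-element substructures to distinct positive existential types of enumerating tuples and hence to distinct definable relations, and in part (\ref{item:11}) both use homomorphism-homogeneity to show that tuples related by a local isomorphism of coordinates have the same positive existential type, so that the number of such types is controlled by the (finite) number of isomorphism types of enumerated substructures of each size. The only divergence is in wrapping up (\ref{item:11}): the paper concludes via the oligomorphy of $\End(\bA)$ (bounding the invariant relations generated by irreflexive tuples by $M\cdot n!$ and noting that positive-existentially definable relations are $\End(\bA)$-invariant), whereas you stay entirely with types and observe that $\pTp_\bA(\varrho)$ is an intersection of the finitely many tuple types --- both conclusions are sound.
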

\begin{proof}
\begin{description}
\item[About (\ref{item:10})] Suppose, there are infinitely many isomorphism
  classes of substructures of cardinality $k$. Let
  $(\bB_i)_{i\in\mathbb{N}}$ be a sequence of distinct representatives of
  isomorphism classes of substructures of $\bA$ of cardinality
  $k$. Let us enumerate the 
  elements of $B_i$ like $B_i=\{b_{i,1},\dots,b_{i,k}\}$. Consider the
  tuples $\bar{b}_i=(b_{i,1},\dots,b_{i,k})$, and the relations
  $\varrho_i:=\{\bar{c}\mid \pTp_\bA(\bar{b}_i)\subseteq\pTp_\bA(\bar{c})\}$.

  Then for any two
  distinct $i,j$ from $\mathbb{N}$ we have that 
  $\pTp_\bA(\bar{b}_i)\neq\pTp_\bA(\bar{b}_j)$, and hence
  $\varrho_i\neq\varrho_j$. This way we have infinitely many distinct
  $k$-ary relations on $\bA$ that can be defined by sets of positive
  existential formulae over $\bA$ --- contradiction.

  \item[About (\ref{item:11})] Equip the $n$-tuples over $A$ with the following
  quasiorder: $\bar{a}\le \bar{b}$ if there is a local homomorphism
  that maps $\bar{a}$ to $\bar{b}$. Since $\bA$ is homomorphism-homogeneous, this is the case if and only if $\bar{b}$ is in the
  invariant relation of $\End(\bA)$ generated by $\bar{a}$. 

  We will show that $\End(\bA)$ is oligomorphic. Suppose, it is not. Then there is a $k\ge 1$ and a sequence $(\bar{b}_i)_{i\in\mathbb{N}}$ of $k$-tuples over $A$, such that for all distinct natural numbers  $i$ and $j$ we have that $\bar{b}_i$ and $\bar{b}_j$ generate different invariant relations of $\End(\bA)$. If this is so, then by the infinite pigeon hole principle, there exists an $n\le k$ and a sequence $(\bar{c}_i)_{i\in\mathbb{N}}$ of irreflexive $n$-tuples over $A$ such that any two tuples generate different invariant relations of $\End(\bA)$. Let $M$ be the number of isomorphism classes of substructures of cardinality $n$ in $\bA$. Then the number of invariant $n$-ary relations generated by $n$-ary irreflexive tuples is bounded from above by $M\cdot n!$ --- contradiction. Hence $\End(\bA)$ is oligomorphic. 

  Since all relations definable by sets of positive existential
  formulae over $\bA$ are invariant under $\End(\bA)$, it follows
  that $\bA$ is weakly oligomorphic.
\end{description}
\end{proof}
An immediate consequence  of the previous Lemma is that if $\bA$ is a homomorphism-homogeneous relational structure over a finite signature, then $\bA$ is weakly oligomorphic. This, together with the characterization of the ages of countable homomorphism-homogeneous structures, gives a rich source of  weakly oligomorphic structures, since any reduct of a countable weakly oligomorphic structure will again be weakly oligomorphic.

\begin{lemma}\label{woo2}
  Let $\bC$ be a homogeneous core. Then $\bC$ is weakly oligomorphic
  if and only if it is oligomorphic.
\end{lemma}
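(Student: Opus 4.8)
The nontrivial direction is: if $\bC$ is a homogeneous core and $\bC$ is weakly oligomorphic, then $\bC$ is oligomorphic, i.e.\ $\Aut(\bC)$ is oligomorphic. (The converse is immediate, since every oligomorphic structure is weakly oligomorphic.) The plan is to use the characterization of weak oligomorphy via Lemma~\ref{woo}\eqref{item:10}: weak oligomorphy of $\bC$ tells us that $\Age(\bC)$ contains, for each $n$, only finitely many isomorphism types of structures of cardinality $n$. For an oligomorphic structure one wants the analogous statement about $n$-orbits of $\Aut(\bC)$ rather than isomorphism types in the age; the two are linked precisely by homogeneity together with the core property.

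First I would recall that since $\bC$ is a core, every endomorphism of $\bC$ is an embedding, and hence every \emph{local} homomorphism (a homomorphism between finite substructures, which by homomorphism-homogeneity — note $\bC$ is a homogeneous core hence in particular homomorphism-homogeneous? no, we cannot assume that — but we can use: by homogeneity every local isomorphism extends to an automorphism) extends to an automorphism. Actually the cleanest route: since $\bC$ is a core, I claim every local homomorphism of $\bC$ is in fact a local \emph{embedding}. Indeed, given a homomorphism $f:\bB\to\bC$ with $\bB\le\bC$ finite, weak oligomorphy gives that $\bC$ is weakly homomorphism-homogeneous? — that is not automatic either. Let me instead argue directly: it suffices to show that, for each $n$, the number of $\Aut(\bC)$-orbits on $C^n$ is finite. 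By homogeneity, two $n$-tuples $\bar a,\bar b\in C^n$ lie in the same $\Aut(\bC)$-orbit if and only if the map $a_i\mapsto b_i$ is a (well-defined) isomorphism between the induced substructures $\langle\bar a\rangle$ and $\langle\bar b\rangle$. So the number of orbits on $C^n$ is bounded by $\sum_{k\le n}(\text{number of iso types of }k\text{-element structures in }\Age(\bC))\cdot(\text{number of surjections }[n]\to[k])$, which is finite precisely because, by Lemma~\ref{woo}\eqref{item:10}, each summand's first factor is finite. Hence $\Aut(\bC)$ is oligomorphic and $\bC$ is oligomorphic.

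In fact the core hypothesis plays no role in the implication "weakly oligomorphic + homogeneous $\Rightarrow$ oligomorphic" as I have sketched it; the argument above only uses homogeneity (to equate $\Aut(\bC)$-orbits with isomorphism types of finitely generated substructures) and Lemma~\ref{woo}\eqref{item:10}. So I would present it in that generality and then note the corollary for homogeneous cores. The main (and only) obstacle is making sure the bookkeeping in the orbit-counting step is correct: an $n$-tuple $\bar a$ determines a substructure on the set $\{a_1,\dots,a_n\}$ which may have fewer than $n$ elements, so one must stratify by the partition of $[n]$ induced by equalities among the $a_i$; for each such partition into $k$ blocks the tuple is determined by an iso type of a $k$-element structure in $\Age(\bC)$ together with a labelling, and homogeneity says tuples with the same partition and the same labelled iso type are $\Aut(\bC)$-conjugate. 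Since there are finitely many partitions of $[n]$, finitely many iso types per cardinality (by weak oligomorphy), and finitely many labellings, the total orbit count is finite. This completes the proof.
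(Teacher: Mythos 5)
Your argument is correct, but it takes a genuinely different route from the paper's. The paper's proof leans on the core hypothesis at the key step: given $\bar a,\bar b$ with $f(\bar a)=\bar b$ for some $f\in\End(\bC)$, the core property forces $f$ to be an embedding, so $a_i\mapsto b_i$ is a local isomorphism, which homogeneity extends to an automorphism; hence the $\Aut(\bC)$-orbits coincide with the classes determined by the action of $\End(\bC)$ on tuples, and weak oligomorphy supplies only finitely many of the latter. You instead go through Lemma~\ref{woo}(\ref{item:10}) --- weak oligomorphy bounds the number of isomorphism types of each cardinality in $\Age(\bC)$ --- and then use homogeneity alone to identify the $\Aut(\bC)$-orbits on $C^n$ with ``labelled'' isomorphism types of substructures generated by $n$-tuples; your bookkeeping (stratifying by the equality pattern of the tuple, with a surjection $[n]\to[k]$ encoding both the partition and the labelling) is sound, and in particular you correctly note that tuples must induce the same \emph{labelled} isomorphism type, not merely isomorphic substructures, to lie in one orbit. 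What your approach buys is a strictly stronger statement: every weakly oligomorphic \emph{homogeneous} structure is oligomorphic, with the core hypothesis discharged entirely (this is the homogeneous analogue, via Lemma~\ref{woo}(\ref{item:10}), of the classical fact that a homogeneous structure is oligomorphic precisely when its age has finitely many isomorphism types in each cardinality). What the paper's approach buys is brevity --- no counting and no appeal to Lemma~\ref{woo} --- and it makes visible exactly how the core property and homogeneity interact, which is the theme of that section. Either proof is acceptable; if you present yours, state the strengthened lemma explicitly and derive the paper's version as a special case.
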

\begin{proof}
	Obviously, if a structure is oligomorphic, then it is weakly oligomorphic, too.

	Suppose now that $\bC$ is weakly oligomorphic. Take $\bar{a}=(a_1,\dots, a_n)\in C^n$ and $\bar{b}=(b_1\dots,b_n)$ such that there exists an $f\in \End(\bC)$, such that $f(\bar{a})=\bar{b}$. Since $\bC$ is a core, we conclude that $f$ is an embedding, and, therefore, $a_i\mapsto b_i$, for $i=1,\dots ,n$ is a local isomorphism. Since $\bC$ is homogeneous, it follows that there is a $g\in \Aut(\bC)$ such that $g(\bar{a})=\bar{b}$, so $\bar{b}$ is in the $n$-orbit of $\Aut(\bC)$ that is generated by $\bar{a}$. This implies that $\bC$ is oligomorphic.
\end{proof}

The following result links homomorphism-homogeneous structures to
homogeneous structures. 
\begin{theorem}\label{hh-core}
	Let $\bA$ be a weakly oligomorphic countable homomorphism-homogeneous structure. Then $\bA$ has a core $\bF$ that is isomorphic to the \Fraisse-limit of $\cC_{\bA}$. Moreover, $\bF$ is oligomorphic. 
\end{theorem}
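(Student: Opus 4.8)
The plan is to build the \Fraisse{} limit $\bF$ of $\cC_{\bA}$ abstractly, observe that it is already a core, and then realize it inside $\bA$ together with a retraction onto it. First I would check that $\cC_{\bA}$ is a \Fraisse{} class: since $\bA$ is countable and homomorphism-homogeneous it is weakly homomorphism-homogeneous, so Lemma~\ref{hipp} gives $\Age(\bA)\to\cC_{\bA}$; the age $\Age(\bA)$ is isomorphism-closed, has the HP and the JEP (being the age of the countable structure $\bA$), and has the HAP (being the age of a homomorphism-homogeneous structure), hence is a homo-amalgamation class, so Lemma~\ref{LEM:AP} applied with $\cC=\Age(\bA)$ and $\cD=\cC_{\bA}$ shows that $\cC_{\bA}$ has the HP and the AP; by Lemma~\ref{woo}(a) together with $\cC_{\bA}\subseteq\Age(\bA)$ it has only finitely many isomorphism types in each finite cardinality, hence only countably many. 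By \Fraisse's Theorem I then obtain a countable homogeneous structure $\bF$ with $\Age(\bF)=\cC_{\bA}$, unique up to isomorphism.

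Next I would record the central observation: every finite substructure of $\bF$ lies in $\cC_{\bA}$ and is therefore hom-irreducible in $\Age(\bA)$; consequently, every homomorphism $h\colon\bF\to\bG$ into a structure $\bG$ with $\Age(\bG)\subseteq\Age(\bA)$ is an embedding, because its restriction to each finite $\bB\le\bF$ is a homomorphism onto $\img(h\restr_{\bB})\in\Age(\bG)\subseteq\Age(\bA)$, hence an embedding, and a map is an embedding as soon as all its finite restrictions are. Taking $\bG=\bF$ shows that every endomorphism of $\bF$ is an embedding, i.e.\ $\bF$ is a core; combined with homogeneity it further shows that every local homomorphism of $\bF$ is an isomorphism onto its image and thus extends to an automorphism, so $\bF$ is homomorphism-homogeneous. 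Since $\Age(\bF)=\cC_{\bA}$ has finitely many isomorphism types in each cardinality, Lemma~\ref{woo}(b) yields that $\bF$ is weakly oligomorphic, and then Lemma~\ref{woo2} promotes the weakly oligomorphic homogeneous core $\bF$ to an oligomorphic one, which settles the ``moreover'' part of the theorem.

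Finally I would place $\bF$ inside $\bA$ as a core. Since $\bF$ is countable and $\Age(\bF)=\cC_{\bA}\subseteq\Age(\bA)$, we have $\Age(\bF)\to\Age(\bA)$, so Proposition~\ref{prop:wo-hom} (using that $\bA$ is weakly oligomorphic) gives a homomorphism $\bF\to\bA$, which by the observation above is an embedding; identify $\bF$ with its image $\le\bA$. In the opposite direction $\Age(\bA)\to\cC_{\bA}=\Age(\bF)$ by Lemma~\ref{hipp}, and $\bF$ is weakly oligomorphic, so Proposition~\ref{prop:wo-hom} gives a homomorphism $k\colon\bA\to\bF$; composing $k$ with the embedding $\bF\injto\bA$ yields an endomorphism of $\bA$ whose image lies inside $\bF$. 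Hence $\bF$ is a core of $\bA$, and it is isomorphic to the \Fraisse{} limit of $\cC_{\bA}$, as claimed. I expect this last step to be the main obstacle: producing a genuine embedding $\bF\injto\bA$ and a retraction-type endomorphism of $\bA$ onto its image. What makes it go through is precisely the hom-irreducibility observation of the previous paragraph, which forces the homomorphisms that Proposition~\ref{prop:wo-hom} supplies only abstractly into the required form; the rest is routine bookkeeping with the earlier lemmas.
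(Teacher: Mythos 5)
Your proof is correct and follows the same overall strategy as the paper's: form the \Fraisse{} limit $\bF$ of $\cC_{\bA}$, use hom-irreducibility of the structures in $\cC_{\bA}$ to force the homomorphism $\bF\to\bA$ supplied by Proposition~\ref{prop:wo-hom} to be an embedding, deduce oligomorphy of $\bF$ from Lemma~\ref{woo} and Lemma~\ref{woo2}, and then produce a homomorphism $\bA\to\bF$ in the reverse direction. The one genuine divergence is in that last step: the paper routes through Corollary~\ref{core}, i.e.\ it first invokes the tower construction of Proposition~\ref{retractcore} to produce a core $\bC\le\bA$ with $\Age(\bC)=\cC_{\bA}$ and then maps $\bC\to\bF$, whereas you apply Proposition~\ref{prop:wo-hom} directly to the pair $\bA$, $\bF$, using $\Age(\bA)\to\cC_{\bA}=\Age(\bF)$ from Lemma~\ref{hipp} together with the already-established weak oligomorphy of $\bF$. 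Your shortcut is valid and makes Theorem~\ref{hh-core} independent of Proposition~\ref{retractcore}; what the paper's detour buys is the concrete description of the retraction as a union of a tower of local epimorphisms, which is the explicit picture of the core inside $\bA$ that the concluding remarks emphasize. You are also more explicit than the paper in verifying that $\cC_{\bA}$ is a countable \Fraisse{} class (via Lemma~\ref{LEM:AP} applied to the homo-amalgamation class $\Age(\bA)$ and Lemma~\ref{hipp}), a point the paper leaves implicit when it simply ``lets $\bF$ be the \Fraisse{} limit.''
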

\begin{proof}
  Let $\bF$ be the \Fraisse-limit of
  $\cC_\bA$. In particular, $\bF$ is homogeneous. 

  Since $\bA$ is weakly oligomorphic and
  $\cC_\bA\to\Age(\bA)$, we conclude from
  Proposition~\ref{prop:wo-hom}, that $\bF\to\bA$. Since
  $\Age(\bF)=\cC_{\bA}$, every homomorphism from $\bF$ to $\bA$ is an
  embedding. So we can assume without loss of generality that
  $\bF\le\bA$.   

  Every local homomorphism of $\bF$ is an embedding. Hence $\bF$ is a
  homomorphism-homogeneous core. Since $\cC_{\bA}\subseteq\Age(\bA)$, and
  since $\bA$ is weakly oligomorphic, from Lemma~\ref{woo}, it follows
  that $\bF$ is weakly oligomorphic. Since $\bF$ is a homogeneous
  core, from Lemma~\ref{woo2}, it follows that $\bF$ is oligomorphic.

  It remains to show that $\bA\to\bF$. 
  By Corollary \ref{core}, $\bA$ has a core $\bC$ such that
  $\Age(\bC)=\cC_{\bA}$. Hence, by Proposition~\ref{prop:wo-hom}, it
  follows that $\bC\to\bF$. 
\end{proof}

\begin{corollary}
	Every countable, weakly oligomorphic, homomorphism-homogeneous structure $\bA$ has, up to isomorphism, a unique homomorphism-homogeneous core $\bF$. Moreover, $\bF$ is oligomorphic and homogeneous.
\end{corollary}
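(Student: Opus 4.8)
The plan is to combine the results already established in this section. By Theorem~\ref{hh-core}, the structure $\bA$ has a core $\bF$ that is isomorphic to the \Fraisse-limit of $\cC_{\bA}$, and this $\bF$ is oligomorphic; being a \Fraisse-limit, it is homogeneous. Since every local homomorphism of a homogeneous structure whose age consists of hom-irreducible structures is an embedding (as noted in the proof of Theorem~\ref{hh-core}, $\bF$ is a homomorphism-homogeneous core), $\bF$ is in particular homomorphism-homogeneous. This establishes the existence part: $\bA$ has a homomorphism-homogeneous core, and it is oligomorphic and homogeneous.

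For uniqueness, I would argue as follows. Suppose $\bF$ and $\bF'$ are two homomorphism-homogeneous cores of $\bA$. First I would show that $\Age(\bF)=\cC_{\bA}=\Age(\bF')$. One inclusion: since $\bF$ is a core of $\bA$, there is an endomorphism of $\bA$ with image inside $\bF$, so every finite $\bA$-substructure that is hom-irreducible in $\Age(\bA)$ maps into $\bF$ and, being hom-irreducible, embeds into $\bF$; hence $\cC_{\bA}\subseteq\Age(\bF)$. Conversely, every finite substructure of $\bF$ is a substructure of $\bA$, and since every endomorphism of $\bF$ is an embedding, every local homomorphism of $\bF$ (in particular into $\bA$, since $\bF\le\bA$) is an embedding, so every finite substructure of $\bF$ is hom-irreducible in $\Age(\bF)\subseteq\Age(\bA)$; thus $\Age(\bF)\subseteq\cC_{\bA}$. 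The same applies to $\bF'$, giving $\Age(\bF)=\Age(\bF')=\cC_{\bA}$.

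Now, having the same age and both being homomorphism-homogeneous (hence weakly homomorphism-homogeneous), the Corollary following the Proposition in Section~\ref{sec:homom-homog-amalg} yields $\bF\preceq_h\bF'$ and $\bF'\preceq_h\bF$, so $\bF\to\bF'$ and $\bF'\to\bF$. But $\Age(\bF')=\cC_{\bA}$ means $\bF'$'s age consists of hom-irreducible structures, so every homomorphism from $\bF$ to $\bF'$ is an embedding, and symmetrically every homomorphism from $\bF'$ to $\bF$ is an embedding. Composing, we get an embedding $\bF\to\bF'\to\bF$ whose composite endomorphism of $\bF$ is an embedding; more to the point, $\bF$ and $\bF'$ are mutually embeddable homogeneous structures with the same age, and by the uniqueness clause of \Fraisse's Theorem (both being the \Fraisse-limit of $\cC_{\bA}$) they are isomorphic. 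Alternatively, one can argue directly via a back-and-forth using the embeddings and homogeneity. The main obstacle, such as it is, is making sure the age identifications are airtight — specifically that a core of $\bA$ in the sense of this paper really does have age exactly $\cC_{\bA}$ — but this is already essentially contained in Proposition~\ref{retractcore} and its proof, so the corollary is a short assembly of pieces rather than a genuinely new argument.
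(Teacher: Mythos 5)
Your overall strategy is the same as the paper's (existence from Theorem~\ref{hh-core}; uniqueness by showing that any other homomorphism-homogeneous core $\bF'$ has age $\cC_{\bA}$ and is homogeneous, then invoking \Fraisse's uniqueness), but the pivotal step of the uniqueness half --- the inclusion $\Age(\bF')\subseteq\cC_{\bA}$ --- is not established by what you wrote. You correctly observe that every local homomorphism of $\bF'$ (a homomorphism from a finite substructure of $\bF'$ \emph{into $\bF'$}) is an embedding, but then conclude that every finite substructure of $\bF'$ is ``hom-irreducible in $\Age(\bF')\subseteq\Age(\bA)$, thus in $\cC_{\bA}$''. This inference runs in the wrong direction: membership in $\cC_{\bA}$ demands that every homomorphism into every member of the \emph{larger} class $\Age(\bA)$ be an embedding, and such a homomorphism need not land inside $\bF'$, so it is not a local homomorphism of $\bF'$ and neither the core property nor the homomorphism-homogeneity of $\bF'$ applies to it directly. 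Hom-irreducibility in a subclass is a weaker condition than hom-irreducibility in the ambient age. The paper closes exactly this gap using Lemma~\ref{hipp}: since $\Age(\bA)\to\cC_{\bA}$, a finite $\bC\le\bF'$ that is not hom-irreducible in $\Age(\bA)$ admits a non-embedding homomorphism into some member of $\cC_{\bA}$, and since $\cC_{\bA}\subseteq\Age(\bF')$ (the inclusion you did prove) this target embeds into $\bF'$; composing yields a local homomorphism of $\bF'$ that is not an embedding, which extends to a non-embedding endomorphism of $\bF'$ --- contradicting that $\bF'$ is a core. (An alternative repair: compose an arbitrary homomorphism $\bC\to\bA$ with the endomorphism of $\bA$ whose image lies in $\bF'$, guaranteed by the definition of ``core of $\bA$''; this also manufactures the required local homomorphism of $\bF'$.)

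A second, smaller issue: your final appeal to ``the uniqueness clause of \Fraisse's Theorem'' requires $\bF'$ to be \emph{homogeneous}, which you never establish; mutual embeddability together with equal ages does not by itself give isomorphism (compare $\mathbb{Q}$ and $\mathbb{Q}$ with a greatest element adjoined: same age, mutually embeddable, not isomorphic). Homogeneity of $\bF'$ does follow from ingredients you already have --- every local isomorphism of $\bF'$ extends, by homomorphism-homogeneity, to an endomorphism, which is an embedding because $\bF'$ is a core, so $\bF'$ is weakly homogeneous and hence homogeneous --- and once this is said, the detour through $\preceq_h$ and mutual embeddings becomes unnecessary. With these two repairs your argument coincides with the paper's proof.
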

\begin{proof}
  Theorem \ref{hh-core} guaranties the existence of 
  $\bF$. Indeed, $\bF$ is homomorphism-homogeneous, because every
  local homomorphism of $\bF$ is an embedding. 

  Suppose that $\bF'$ is another such core. Then
  $\cC_{\bA}\subseteq \Age(\bF')$.
  On the other hand, $\Age(\bA)\to\cC_{\bA}$, hence
  $\Age(\bF')\to\cC_{\bA}$. Hence any substructure of $\bF$ that is
  not hom-irreducible in $\Age(\bA)$, homomorphically maps to a
  hom-irreducible element. This defines a local homomorphism that is
  not an embedding. By the homomorphism-homogeneity of $\bF'$, this
  extends to an endomorphism, that is not an embedding ---
  contradiction. Thus $\Age(\bF')=\cC_{\bA}$, and every local
  homomorphism is an embedding. From this it follows that $\bF'$ is
  weakly homogeneous, and hence homogeneous. From \Fraisse's
  theorem we conclude that $\bF\cong\bF'$. 
\end{proof}

\begin{example}\label{ex1}
	It is well-known that the Rado-graph is homogeneous, homomorphism-homogeneous, oligomorphic, and weakly oligomorphic. Its age consists of all finite graphs. A finite graph is hom-irreducible in the class of all finite graphs if and only if it is a complete graph. Hence, the homomorphism-homogeneous core of the Rado-graph is the countably infinite complete graph $K_\omega$. Note that in fact  $K_\omega$ is a retract of the Rado graph (cf. \cite{BonDel00}).
\end{example}
The previous example can be generalized to the class of all homomorphism-homogeneous graphs:
\begin{example}\label{ex2}
	Let $\bG$ be a countably infinite homomorphism-homogeneous graph. Then all hom-irreducible graphs in $\Age(\bG)$ are complete graphs. This can be seen by the following argument: If $\bG$ is itself a complete graph, then nothing need to be proved. So suppose that $\bG$ is not complete. Let $\bA\in\Age(\bG)$ be a finite non-complete subgraph --- say, $a,b\in A$ induce a non-edge in $\bA$. Let $\overline{K_2}$ be the graph with two vertices and no edge, and let $K_1$ be the trivial graph with just one vertex and no edge. Suppose, the vertex set of $\overline{K}_2$ is $\{1,2\}$. Then $\iota:\overline{K}_2\injto\bA$ defined by $1\mapsto a$, $2\mapsto b$ is a graph-embedding. Let $h$ be the unique homomorphism from $\overline{K}_2\to K_1$. Then, since $\Age(\bG)$ has the HAP, there exists a finite graph $\bB\in\Age(\bG)$, a homomorphism $\hat{h}:\bA\to\bB$, and an embedding $\hat\iota:K_1\injto\bB$, such that the following diagram commutes:
	\[
	\begin{psmatrix}
		[name=K1] K_1 & [name=B]\bB \\
		[name=K2]\overline{K}_2 & [name=A] \bA
		\everypsbox{\scriptstyle}
		\ncline{H->}{K2}{A}^\iota
		\ncline{H->}{K1}{B}^{\hat\iota}
		\ncline{->}{K2}{K1}<h
		\ncline{->}{A}{B}>{\hat{h}}
	\end{psmatrix}
	\]
	In particular, $\hat{h}(a)=\hat{h}(b)$. Consequently, $\bA$ is not hom-irreducible in $\Age(\bG)$. We conclude that every countable homomorphism-homogeneous graph has a core that is a complete graph.
	
	Note that the countable homomorphism-homogeneous graphs are still not completely classified. 
\end{example}

\section{Weak oligomorphy and $\omega$-categoricity}\label{sec:omega-categ-substr}

In this section we will create a link from weakly oligomorphic structures to $\omega$-categorical structures. Let us start by recalling some classical notions and results from model theory, and by proving some additional auxiliary results:
\begin{definition}
	A first order theory is called $\omega$-categorical if it has up to isomorphism exactly one countably infinite model. A countably infinite structure $\bA$ is called $\omega$-categorical if $\Th(\bA)$ is $\omega$-categorical.
\end{definition}
The notions of oligomorphy and $\omega$-categoricity are linked by the Engeler-Ryll-Nardzewski-Svenonius Theorem  (cf. \cite[(2.10)]{Cam90}):
\begin{theorem}[Engeler, Ryll-Nardzewski, Svenonius]\label{thm:ryll-nardzewsky}
  Let $\bA$ be a countably infinite structure. Then $\bA$ is
  $\omega$-categorical if and only if it is oligomorphic.
\end{theorem}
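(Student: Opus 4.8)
The plan is to route both directions through the count of complete $n$-types of $T:=\Th(\bA)$, i.e.\ through the Stone spaces $S_n(T)$, and to bridge "number of types'' with "number of $\Aut(\bA)$-orbits'' by back-and-forth arguments.

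First I would prove that oligomorphy of $\bA$ implies $S_n(T)$ is finite for every $n$. Since automorphisms preserve complete types, tuples lying in a common $\Aut(\bA)$-orbit realize the same type, so only finitely many complete $n$-types are realized in $\bA$; call this finite set $\Sigma_n$. If some $p\in S_n(T)$ were not in $\Sigma_n$, then for each $q\in\Sigma_n$ I pick a formula $\varphi_q\in p\setminus q$; the conjunction $\psi:=\bigwedge_{q\in\Sigma_n}\varphi_q$ then lies in $p$ but is refuted by every tuple of $A^n$ (each realizes some $q\in\Sigma_n$), so $\neg\exists\bar x\,\psi\in T$, contradicting consistency of $p$ with $T$. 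Hence $S_n(T)=\Sigma_n$ is finite. Conversely, if $S_n(T)$ is finite for every $n$, then every point of the compact Hausdorff space $S_n(T)$ is isolated, so every type is principal and is therefore realized in \emph{every} model of $T$ (if $\varphi$ isolates $p$, then $T\vdash\exists\bar x\,\varphi$ by completeness). A standard back-and-forth between two countable models $\bB,\bB'$ of $T$ — maintaining the invariant that the two current finite tuples realize the same type, and extending on either side using the formula isolating the relevant $(m+1)$-type — produces an isomorphism $\bB\cong\bB'$; this yields $\omega$-categoricity.

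For the reverse implication I would first show that $\omega$-categoricity forces every $p\in S_n(T)$ to be isolated: a non-isolated type is omitted in some countable model by the Omitting Types Theorem, while it is realized in some countable model (take a model realizing it and pass, via Löwenheim--Skolem, to a countable elementary substructure containing a realization); these two models cannot be isomorphic. Compactness of $S_n(T)$ then makes it finite once all its points are isolated, and — isolated types being realized everywhere — all these types are already realized in $\bA$ itself. The remaining step is that two $n$-tuples of $\bA$ with the same type lie in one $\Aut(\bA)$-orbit. For this I would check that $\bA$ is $\omega$-saturated: a $1$-type over a finite tuple $\bar a$ extends to an $(n+1)$-type, which is isolated by some $\theta(\bar x,y)$, and then $\exists y\,\theta(\bar a,y)$ holds in $\bA$ by completeness of $T$, witnessing the given type in $\bA$. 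A countable $\omega$-saturated structure is $\omega$-homogeneous, so the usual back-and-forth extends any finite partial elementary map, one point at a time on either side, to an automorphism. Consequently the number of $\Aut(\bA)$-orbits on $A^n$ equals $|S_n(T)|<\infty$, and $\bA$ is oligomorphic.

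I expect the main obstacle to be precisely this bridge between type-counting and orbit-counting, i.e.\ the fact that in $\bA$ itself tuples of equal type are automorphism-equivalent; this is the one place where $\omega$-saturation / $\omega$-homogeneity of $\bA$ is genuinely needed and where one must argue with parameters and expanded languages rather than with $T$ over the empty set. Everything else is bookkeeping with the Stone spaces $S_n(T)$, compactness, the Omitting Types Theorem, and downward Löwenheim--Skolem.
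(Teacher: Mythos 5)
The paper does not prove this statement at all: it is quoted as the classical Engeler--Ryll-Nardzewski--Svenonius theorem with a citation to Cameron's book, so there is no in-paper argument to compare yours against. Judged on its own, your sketch is the standard textbook proof and is essentially correct. Both directions are routed correctly through the Stone spaces $S_n(T)$: the argument that an unrealized complete type would yield a formula $\psi\in p$ with $\neg\exists\bar x\,\psi\in T$ is the right way to show that oligomorphy forces $S_n(T)$ to be finite; the back-and-forth using isolating formulas correctly yields uniqueness of the countable model; and in the converse direction the combination of the Omitting Types Theorem, downward L\"owenheim--Skolem, and compactness of $S_n(T)$ is exactly what is needed. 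You also correctly identify the one genuinely delicate point, namely the bridge from ``same complete type'' to ``same $\Aut(\bA)$-orbit'' via $\omega$-saturation and $\omega$-homogeneity of $\bA$. Two small places deserve slightly more care in a write-up: first, the justification of $\bA\models\exists y\,\theta(\bar a,y)$ should go through the observation that $\exists y\,\theta(\bar x,y)$ lies in the (isolated, hence realized) type of $\bar a$ over $\emptyset$, rather than ``by completeness of $T$'' alone; second, since the paper's definition of $\omega$-categoricity counts only countably \emph{infinite} models, you should note that $T=\Th(\bA)$ has no finite models because $\bA$ is infinite, so every countable model produced by L\"owenheim--Skolem or Omitting Types is indeed countably infinite. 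Neither point is a gap in the mathematics, only in the bookkeeping.
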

In the previous section we linked weakly oligomorphic homomorphism-homogeneous structures with oligomorphic homogeneous structures. The following Theorem makes a similar link between weakly oligomorphic structures and $\omega$-categorical structures. 
\begin{theorem}\label{coreofwo}
	Let $\bA$ be a countable weakly oligomorphic relational  structure. Then $\bA$, is homomorphism-equivalent to a finite or $\omega$-categorical structure $\bF$. Moreover, $\bF$ embeds into  $\bA$. 
\end{theorem}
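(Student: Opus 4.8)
The plan is to reduce to the homomorphism-homogeneous case, to which the results of Section~\ref{sec:cores-homom-homog} apply, and then to read off $\omega$-categoricity from oligomorphy via the Engeler--Ryll-Nardzewski--Svenonius Theorem. First I would dispose of the trivial case: if $\bA$ is finite, take $\bF:=\bA$. So assume $\bA$ is countably infinite. The key preliminary step is to pass to a positive existential expansion $\widehat\bA$ that is homomorphism-homogeneous. This is available because $\bA$ is weakly oligomorphic: expand the signature by a relation symbol for every relation definable by a positive existential type in $\bA$ (there are only finitely many of each arity), so that in $\widehat\bA$ every positive existential formula is equivalent to a quantifier-free positive formula; by Proposition~\ref{MainThm}, $\widehat\bA$ is then weakly homomorphism-homogeneous, hence (being countable) homomorphism-homogeneous. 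Note $\widehat\bA$ is still weakly oligomorphic, since its endomorphism monoid coincides with $\End(\bA)$ and Proposition~\ref{prop:weakly:olig} applies.

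Next I would apply the machinery of Section~\ref{sec:cores-homom-homog} to $\widehat\bA$. By Lemma~\ref{hipp}, $\Age(\widehat\bA)\to\cC_{\widehat\bA}$, so by Theorem~\ref{hh-core}, $\widehat\bA$ has a core $\widehat\bF$ isomorphic to the \Fraisse-limit of $\cC_{\widehat\bA}$, and $\widehat\bF$ is oligomorphic, homogeneous, and homomorphism-equivalent to $\widehat\bA$, with $\widehat\bF\le\widehat\bA$. Now let $\bF$ be the reduct of $\widehat\bF$ to the original signature of $\bA$, sitting inside $\bA$ as $\bF\le\bA$ via the same underlying set. The homomorphisms $\widehat\bA\to\widehat\bF$ and $\widehat\bF\to\widehat\bA$ are in particular homomorphisms $\bA\to\bF$ and $\bF\to\bA$ for the reduced signatures (dropping relations only makes it easier to be a homomorphism), so $\bA$ and $\bF$ are homomorphism-equivalent.

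It remains to see that $\bF$ is finite or $\omega$-categorical. If $\widehat\bF$ (equivalently $\bF$) is finite we are done. Otherwise $\widehat\bF$ is countably infinite and oligomorphic, i.e.\ $\Aut(\widehat\bF)$ has finitely many $n$-orbits for each $n$. Since $\bF$ and $\widehat\bF$ have the same underlying set and $\widehat\bF$ is an expansion of $\bF$, we have $\Aut(\widehat\bF)\le\Aut(\bF)$, so $\Aut(\bF)$ also has finitely many $n$-orbits for each $n$; thus $\bF$ is oligomorphic, and by Theorem~\ref{thm:ryll-nardzewsky} it is $\omega$-categorical. The main obstacle is really the bookkeeping in the first paragraph: one must check that the positive existential expansion is well-defined (finiteness of the relevant relations of each arity, which is exactly weak oligomorphy), that it preserves weak oligomorphy, and that passing between $\bA$ and $\widehat\bA$ — and later between $\widehat\bF$ and $\bF$ — does not disturb the homomorphism-equivalence; everything after that is a direct appeal to the already-proven Theorem~\ref{hh-core} and Theorem~\ref{thm:ryll-nardzewsky}.
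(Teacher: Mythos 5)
Your proposal is correct and follows essentially the same route as the paper: expand $\bA$ by all positive existentially definable relations, invoke Proposition~\ref{MainThm} to get homomorphism-homogeneity of $\widehat\bA$, apply Theorem~\ref{hh-core} to obtain the oligomorphic homogeneous core $\widehat\bF$, and take the reduct $\bF$, concluding with Theorem~\ref{thm:ryll-nardzewsky}. Your extra bookkeeping (the finite case, $\Aut(\widehat\bF)\le\Aut(\bF)$ for oligomorphy of the reduct, and the persistence of homomorphism-equivalence under reducts) only makes explicit what the paper leaves implicit.
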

\begin{proof}
	Let $\widehat{\bA}$ be the structure that is obtained by expanding $\bA$ by all positive existentially definable relations over $\bA$. Clearly,  $\widehat{\bA}$ is weakly oligomorphic, too.

	In $\widehat{\bA}$ every positive existential formula is equivalent to a positive quantifier-free formula. Hence, by Proposition~\ref{MainThm}, $\widehat{\bA}$ is homomorphism-homogeneous. With  Theorem~\ref{hh-core} we conclude that $\widehat{\bA}$ has a homomorphism-homogeneous core  $\widehat{\bF}$ that is oligomorphic, and homogeneous.

	Let $\bF$ be the reduct of $\widehat{\bF}$ to the signature of  $\bA$. Then $\bF$ is oligomorphic, and since $\widehat{\bA}$ and $\bA$ have the same endomorphisms, $\bF$ is still homomorphism-equivalent to $\bA$. 

	If $\bF$ is countably infinite, then, by Theorem~\ref{thm:ryll-nardzewsky}, it is $\omega$-categorical. 
\end{proof}

\section{Positive existential theories of weakly oligomorphic structures}\label{sec:posit-exist-theor}

The Engeler-Ryll-Nardzewski-Svenonius Theorem (cf. Theorem \ref{thm:ryll-nardzewsky})
can be understood as a characterization of the first order theories of
countable oligomorphic structures. Using Theorem~\ref{coreofwo}, we
can give a similar characterization of the positive existential
theories of weakly oligomorphic structures.
\begin{theorem}
Let $T$ be a set of positive existential propositions. Then the following are equivalent:
\begin{enumerate}
\item $T$ is the positive existential theory of a countable weakly oligomorphic structure.
\item $T$ is the positive existential part of an $\omega$-categorical theory.
\item $T$ is the positive existential theory of a countable oligomorphic structure.
\end{enumerate} 
\end{theorem}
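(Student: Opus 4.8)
The plan is to prove the cycle of implications $(3)\Rightarrow(1)\Rightarrow(2)\Rightarrow(3)$, exploiting that homomorphism-equivalent structures have the same positive existential theory, which is the conceptual glue of the whole argument. Indeed, if $\bA\to\bB$ and $\bB\to\bA$, then a positive existential sentence holds in $\bA$ iff it holds in $\bB$: positive existential sentences are preserved by homomorphisms, and they are built only from $\exists$, $\wedge$, $\vee$ and atomic formulae with no negation, so the two homomorphisms transport witnesses back and forth. I would state this as a preliminary observation, since it is used repeatedly.

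For $(3)\Rightarrow(1)$ there is nothing to do: every countable oligomorphic structure is weakly oligomorphic (noted right after Proposition~\ref{prop:weakly:olig}), so a $T$ witnessed by a countable oligomorphic structure is automatically the positive existential theory of a countable weakly oligomorphic structure, namely the same one. For $(1)\Rightarrow(3)$, let $\bA$ be a countable weakly oligomorphic structure with $T=\pTp$-theory of $\bA$ (the set of positive existential sentences true in $\bA$). If $\bA$ is finite there is little to say, so assume $\bA$ is countably infinite. By Theorem~\ref{coreofwo}, $\bA$ is homomorphism-equivalent to a finite or $\omega$-categorical structure $\bF$; by the preliminary observation $\bF$ and $\bA$ have the same positive existential theory, so $T$ is the positive existential theory of $\bF$. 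If $\bF$ is $\omega$-categorical (hence countably infinite) it is oligomorphic by Theorem~\ref{thm:ryll-nardzewsky}, and we are done. If $\bF$ is finite, one replaces it by a countably infinite oligomorphic structure with the same positive existential theory — e.g. a disjoint-union-style or "blown up" structure, or simply $\bF$ together with $\omega$ copies of itself suitably arranged, or a $\omega$-categorical structure homomorphism-equivalent to $\bF$ built by expanding and taking a \Fraisse-type limit as in Theorem~\ref{coreofwo} applied to an infinite weakly oligomorphic structure homomorphism-equivalent to $\bF$; the point is that $\bF$ (finite) is weakly oligomorphic, so it sits inside the scope of the earlier machinery.

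For $(2)\Rightarrow(3)$, suppose $T$ is the set of positive existential consequences of an $\omega$-categorical theory $S$. Let $\bM$ be the (unique) countably infinite model of $S$; then $\bM$ is oligomorphic by Theorem~\ref{thm:ryll-nardzewsky}, and its positive existential theory is exactly the set of positive existential sentences provable from $S$, which is $T$. Thus $T$ is the positive existential theory of a countable oligomorphic structure. For $(1)\Rightarrow(2)$ (equivalently, combining the above, $(3)\Rightarrow(2)$), take the $\omega$-categorical structure $\bF$ from $(1)\Rightarrow(3)$ and let $S=\Th(\bF)$; then $S$ is $\omega$-categorical and its positive existential part is the positive existential theory of $\bF$, which equals $T$. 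The only genuine subtlety — and the step I expect to require the most care — is the finite case in $(1)\Rightarrow(3)$: matching a finite structure's positive existential theory with a genuinely $\omega$-categorical (hence infinite) one, or alternatively being content to land in "finite or $\omega$-categorical" at the cost of slightly rephrasing statement (3). I would resolve it by observing that a finite structure $\bF$ is homomorphism-equivalent to the countably infinite structure obtained from the homomorphism-homogeneous expansion machinery of Theorem~\ref{coreofwo}, or more directly by noting a finite core $\bF$ is homomorphism-equivalent to $\bF$ together with a countably infinite "padding" that is oligomorphic and maps into $\bF$, so every case of (1) yields a countable oligomorphic witness for (3).
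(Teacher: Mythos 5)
Your proposal is correct and follows essentially the same route as the paper: the equivalence of (2) and (3) via the Engeler--Ryll-Nardzewski--Svenonius Theorem, the trivial implication from oligomorphic to weakly oligomorphic, and the substantive step (1)$\Rightarrow$(3) via Theorem~\ref{coreofwo} together with the observation that homomorphism-equivalent structures share their positive existential theory. Your handling of the finite case---replacing a finite $\bF$ by the disjoint union of $\omega$ copies of itself, of which $\bF$ is a retract---is exactly the paper's device, so the only difference is the cosmetic reorganization into the cycle $(3)\Rightarrow(1)\Rightarrow(2)\Rightarrow(3)$.
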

\begin{proof}
From Theorem~\ref{thm:ryll-nardzewsky}, it follows that statements 2 and 3
are equivalent. Obviously, from 3 follows 1, so, to complete the
proof, it is left to show that from 1 follows 3:

Let $T$ be the positive existential theory of a countable weakly
oligomorphic structure $\bA$. Then, by Theorem~\ref{coreofwo},
$\bA$ is homomorphism-equivalent to a finite or  $\omega$-categorical structure
$\bF$. If $\bF$ is finite, then it is homomorphism-equivalent to an
$\omega$-categorical structure $\widehat{\bF}$ (take $\omega$ disjoint
copies of $\bF$; this structure surely is oligomorphic and hence
$\omega$-categorical; moreover, $\bF$ is a retract of $\widehat{\bF}$). 

Clearly, two homomorphism-equivalent structures have the same positive
existential theories.
\end{proof}

By the Engeler-Ryll-Nardzewski-Svenonius Theorem two oligomorphic structures of the same type are isomorphic if and only if they have the same first order theory. The following is an analogous statement for weakly oligomorphic structures. 
\begin{theorem}\label{woRN}
	Let $\bA$ and $\bB$ be two countable weakly oligomorphic structures. Then the following are equivalent:
	\begin{enumerate}
	\item $\bA$ and $\bB$ are homomorphism-equivalent,
	\item $\bA$ and $\bB$ have the same positive existential theory,
	\item $\Age(\bA)\leftrightarrow\Age(\bB)$,
	\item $\CSP(\bA)=\CSP(\bB)$.
	\end{enumerate}
	Here
	\[ \CSP(\bA):=\{\bC\mid \bC\text{ finite, } \bC\to\bA\}.\]
\end{theorem}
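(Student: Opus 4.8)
The plan is to prove the cycle of implications $(1)\Rightarrow(2)\Rightarrow(3)\Rightarrow(4)\Rightarrow(1)$, since $(1)\Rightarrow(2)$ and $(4)\Rightarrow(1)$ (via $\CSP$) are nearly immediate, and the interesting content sits in $(2)\Rightarrow(3)$ and $(3)\Rightarrow(1)$. For $(1)\Rightarrow(2)$: homomorphism-equivalent structures satisfy the same positive existential sentences, because a positive existential sentence is preserved under homomorphisms, so $\bA\to\bB$ gives $\Th^+(\bA)\subseteq$ (everything true in $\bB$ pulled back)\ldots more precisely, if $\bA\to\bB\to\bA$ and $\varphi$ is a positive existential proposition, then $\bA\models\varphi$ iff $\bB\models\varphi$. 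For $(2)\Rightarrow(3)$: I would argue that $\Age(\bA)\to\Age(\bB)$ is detected by positive existential sentences. Given a finite $\bC\le\bA$ with universe $\{c_1,\dots,c_n\}$, consider the positive existential sentence $\exists x_1\cdots\exists x_n\,\psi(\bar x)$ where $\psi$ is the conjunction of all atomic formulae (in the finite relational signature) that hold of $\bar c$ in $\bA$; this sentence is in $\Th^+(\bA)$, hence in $\Th^+(\bB)$, which exactly says some tuple in $\bB$ satisfies $\psi$, i.e.\ there is a homomorphism $\bC\to\bB$ — so $\Age(\bA)\to\Age(\bB)$, and symmetrically $\Age(\bB)\to\Age(\bA)$, giving $\Age(\bA)\leftrightarrow\Age(\bB)$.

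For $(3)\Rightarrow(1)$ I would invoke Proposition~\ref{prop:wo-hom}: $\Age(\bA)\to\Age(\bB)$ together with $\bA$ countable and $\bB$ weakly oligomorphic yields $\bA\to\bB$; symmetrically, $\Age(\bB)\to\Age(\bA)$ with $\bB$ countable and $\bA$ weakly oligomorphic yields $\bB\to\bA$. Hence $\bA$ and $\bB$ are homomorphism-equivalent. This is really the heart of the theorem, but the hard work has already been done in Section~\ref{sec:ages-weakly-olig}. For $(4)\Rightarrow(1)$: if $\CSP(\bA)=\CSP(\bB)$, then in particular every finite substructure of $\bA$ is (isomorphic to a structure) in $\CSP(\bB)$, so $\Age(\bA)\to\Age(\bB)$ in the sense of the ``projects onto'' relation — indeed for finite $\bC\le\bA$ we have $\bC\in\CSP(\bA)=\CSP(\bB)$, i.e.\ $\bC\to\bB$; symmetrically $\Age(\bB)\to\Age(\bA)$. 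Thus $(4)\Rightarrow(3)$, and then $(3)\Rightarrow(1)$ as above. Finally, to close the loop I need $(1)\Rightarrow(4)$ (or deduce it): if $\bA$ and $\bB$ are homomorphism-equivalent and $\bC$ is finite with $\bC\to\bA$, then composing with $\bA\to\bB$ gives $\bC\to\bB$, so $\CSP(\bA)\subseteq\CSP(\bB)$, and symmetrically; this was already observed in the introduction.

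Assembling: I would present the argument as $(1)\Rightarrow(2)\Rightarrow(3)\Rightarrow(1)$ to establish the equivalence of the first three, then note $(1)\Leftrightarrow(4)$ separately since $\CSP$-equality is a trivial consequence of, and trivially implies via ages, homomorphism-equivalence. The only place requiring genuine care is $(2)\Rightarrow(3)$, where one must be slightly careful that the ``canonical'' positive existential sentence describing a finite structure makes sense — this needs the signature to be finite, or at least one needs to encode each finite substructure by a \emph{set} of positive existential sentences rather than a single one, after which the same argument goes through. Since the theorem is stated for ``structures'' and Lemma~\ref{woo} shows a weakly oligomorphic structure has only finitely many substructures of each finite cardinality (so effectively a manageable age), and since a positive existential \emph{theory} is a set of sentences, working with sets of sentences rather than single sentences is harmless: the type describing $\bC$ over $\bA$ is realized in any model of $\Th^+(\bB)=\Th^+(\bA)$, which suffices. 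I expect the main obstacle, such as it is, to be purely bookkeeping in the infinite-signature case of $(2)\Rightarrow(3)$; all the substantive model-theoretic work is delegated to Proposition~\ref{prop:wo-hom}.
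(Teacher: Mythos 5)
Your overall architecture is sound and, in its essential content, matches the paper's: $(1)\Rightarrow(2)$ by preservation of positive existential sentences under homomorphisms, $(3)\Rightarrow(1)$ by two applications of Proposition~\ref{prop:wo-hom}, and the $\CSP$ clause handled by easy composition/age arguments (you close the loop via $(4)\Rightarrow(3)$ directly, whereas the paper goes $(4)\Rightarrow(2)$ via the canonical finite structures attached to primitive positive sentences; both routes are fine, and yours is arguably the more economical one).

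The one place with a genuine gap is $(2)\Rightarrow(3)$ in the infinite-signature case, which the paper explicitly allows. You correctly see that one must then replace the single canonical sentence by the whole positive existential type $\pTp_\bA(\bar{c})$ of an enumeration $\bar{c}$ of a finite $\bC\le\bA$, and that every \emph{finite} subset of this type is realized in $\bB$ because its existential closure is a positive existential sentence in $\Th^{+}(\bA)=\Th^{+}(\bB)$. But you then assert that the full type ``is realized in any model of $\Th^{+}(\bB)$.'' That is false for general structures: a type all of whose finite subsets are realized in a structure is only guaranteed to be realized in some elementary extension, not in the structure itself. What saves the argument is precisely the weak oligomorphy of $\bB$: if $\pTp_\bA(\bar{c})$ were finitely realized in $\bB$ but not realized, one could extract finite subsets $\Psi_1\subsetneq\Psi_2\subsetneq\cdots$ defining a strictly decreasing infinite chain of nonempty $n$-ary relations in $\bB$, contradicting the finiteness of the number of relations definable by positive existential types. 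This is exactly the paper's Lemma~\ref{lem_wo_real}, and some such argument (alternatively, one using the finiteness of quantifier-free types of $n$-tuples obtainable from Lemma~\ref{woo}) must be supplied explicitly; it is the only nontrivial new ingredient of the theorem beyond Proposition~\ref{prop:wo-hom} and cannot be dismissed as bookkeeping.
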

Before coming to the proof of Theorem~\ref{woRN}, we need to prove some additional auxiliary results:
\begin{lemma}\label{lem_wo_real} 
	Let $\bA$ be a weakly oligomorphic structure over the signature $R$, and let $\Psi$ be a positive existential type. If every finite subset of $\Psi$ is realized in $\bA$, then $\Psi$ is realized in $\bA$.
\end{lemma}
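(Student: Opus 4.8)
The plan is to reduce the statement to a compactness-style argument by exploiting weak oligomorphy to bound the number of relevant positive existential types. Suppose $\Psi = \Psi(\bar x)$ with $\bar x = (x_1,\dots,x_n)$, and assume every finite subset of $\Psi$ is realized in $\bA$. First I would consider, for each finite subset $\Psi_0 \subseteq \Psi$, the relation $\Psi_0^{\bA} \subseteq A^n$, which is nonempty by hypothesis. These relations are downward directed under inclusion: for $\Psi_0, \Psi_1$ finite subsets of $\Psi$, we have $(\Psi_0 \cup \Psi_1)^{\bA} = \Psi_0^{\bA} \cap \Psi_1^{\bA} \subseteq \Psi_0^{\bA}$, and $\Psi_0 \cup \Psi_1$ is again a finite subset of $\Psi$. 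So $\{\Psi_0^{\bA} : \Psi_0 \subseteq \Psi \text{ finite}\}$ is a downward-directed family of nonempty subsets of $A^n$, and $\Psi^{\bA} = \bigcap_{\Psi_0} \Psi_0^{\bA}$. The goal is to show this intersection is nonempty.

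The key step is to observe that each $\Psi_0^{\bA}$, being definable by a finite set (hence a single conjunction) of positive existential formulae, is one of the relations defined by positive existential types in $\bA$; since $\bA$ is weakly oligomorphic, there are only finitely many such $n$-ary relations. Therefore the downward-directed family $\{\Psi_0^{\bA}\}$ takes only finitely many distinct values. A downward-directed family of sets that attains only finitely many values must contain a least element: indeed, pick finite subsets $\Psi_0^{(1)},\dots,\Psi_0^{(m)}$ of $\Psi$ whose relations $\Psi_0^{(k),\bA}$ enumerate all the distinct values that occur, and let $\Psi_* := \bigcup_k \Psi_0^{(k)}$; then $\Psi_*$ is a finite subset of $\Psi$ and $\Psi_*^{\bA} = \bigcap_k \Psi_0^{(k),\bA}$ is contained in every $\Psi_0^{\bA}$. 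Consequently $\Psi^{\bA} = \Psi_*^{\bA}$, which is nonempty because $\Psi_*$ is a finite subset of $\Psi$ and hence realized in $\bA$. Thus $\bA$ realizes $\Psi$.

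I do not anticipate a serious obstacle here: the argument is essentially the standard "finitely many values in a directed system forces stabilization" trick, and the only nontrivial ingredient — finiteness of the set of $n$-ary relations defined by positive existential types — is exactly the definition of weak oligomorphy. The one point to be careful about is that a finite subset of $\Psi$ genuinely defines a relation of the form $\varphi^{\bA}$ for a positive existential $\varphi$ (namely the conjunction of the finitely many formulae), so that it is counted among the finitely many relations; this is immediate since a finite conjunction of positive existential formulae is again positive existential. No well-ordering or choice beyond the finite selection of representatives is needed.
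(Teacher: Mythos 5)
Your proof is correct and rests on the same key point as the paper's: weak oligomorphy allows only finitely many $n$-ary relations definable by positive existential types, so the downward-directed family $\{\Psi_0^{\bA}\}$ cannot keep shrinking. The paper phrases this contrapositively, building an infinite strictly descending chain $\Psi_k^{\bA}$ to reach a contradiction, whereas you argue directly that the family stabilizes at some $\Psi_*^{\bA}=\Psi^{\bA}\neq\emptyset$; this is essentially the same argument.
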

\begin{proof} 
	Suppose that every finite subset of $\Psi$ is realized in $\bA$, but $\Psi$ is not. Following we will define a  sequence  $(\varphi_i)_{i\in\mathbb{N}}$ of formulae from $\Psi$, and a sequence $(\bar{d}_i)_{i\in\mathbb{N}}$ such that $\bar{d}_i\in\varphi_j^\bA$ for $1\le j \le i$, but $\bar{d}_i\notin \varphi_{i+1}^\bA$.

	Let $\varphi_0\in\Psi$. Then there exists a $\bar{d}_0\in A^m$ such that $\bar{d}_0\in \varphi_0^\bA$. Suppose that $\varphi_i$, and $\bar{d}_i$ are defined already. By assumption, $\bar{d}_i$ does not  realize $\Psi$. Let $\varphi_{i+1}\in\Psi$ such that  $\bar{d}_i\notin\varphi_{i+1}^\bA$. Again, by assumption, the set $\{\varphi_1,\dots,\varphi_{i+1}\}$ is realized in $\bA$. Define $\bar{d}_{i+1}\in A^m$ to be a tuple that realizes $\{\varphi_1,\dots,\varphi_{i+1}\}$.

	By construction, the sets $\Psi_k=\{\varphi_1,\dots,\varphi_k\}$ define distinct non-empty relations in $\bA$. However, this is in contradiction with the assumption that $\bA$ is weakly oligomorphic.

	We conclude that $\Psi$ is realizable.
\end{proof}

\begin{lemma}\label{lem:wo_age} 
	Let $\bA$ and $\bB$ be relational structures over the same  signature. Suppose that $\bB$ is weakly oligomorphic and that $\bA$ and $\bB$ have the same positive existential theories. Then $\Age(\bA)\to\Age(\bB)$.
\end{lemma}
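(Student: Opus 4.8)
The plan is to unwind what $\Age(\bA)\to\Age(\bB)$ asks for: that every $\bC\in\Age(\bA)$ admit a homomorphism into some member of $\Age(\bB)$. Since the age of any structure is closed under taking substructures, it suffices to produce, for each finite substructure $\bC\le\bA$, a homomorphism $\bC\to\bB$ (and then restrict to the finite structure induced on the image). So I would fix a finite $\bC\le\bA$, enumerate its carrier without repetitions as $\{a_1,\dots,a_n\}$, and put $\bar a=(a_1,\dots,a_n)$.

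Next I would pass to the quantifier-free positive type $\Psi:=\pTpZ_\bA(\bar a)$, and in particular to the atomic formulas $\varphi(x_1,\dots,x_n)$ it contains. The key observation is a translation between realizing $\Psi$ and homomorphisms: if a tuple $\bar b=(b_1,\dots,b_n)\in B^n$ realizes $\Psi$ in $\bB$, then $a_i\mapsto b_i$ is a homomorphism $\bC\to\bB$, because for relational structures a map is a homomorphism precisely when it preserves all atomic relations, and $\bC\le\bA$ means the atomic facts true of $\bar a$ in $\bA$ are exactly those true of $\bar a$ in $\bC$.

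Then I would check that every finite subset of $\Psi$ is realized in $\bB$: given $\varphi_1,\dots,\varphi_m\in\Psi$, the sentence $\exists x_1\dots\exists x_n(\varphi_1\wedge\dots\wedge\varphi_m)$ is positive existential and is satisfied in $\bA$ (witnessed by $\bar a$); hence, since $\bA$ and $\bB$ have the same positive existential theory, it is satisfied in $\bB$ as well, which yields a tuple realizing $\{\varphi_1,\dots,\varphi_m\}$. Now I invoke Lemma~\ref{lem_wo_real}, applied to the weakly oligomorphic structure $\bB$ and the positive existential type $\Psi$: finite realizability upgrades to realizability of $\Psi$ itself, which by the previous step delivers a homomorphism $\bC\to\bB$ and finishes the proof.

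I do not expect a genuine obstacle here. The only step with content is the passage from ``every finite subset of $\Psi$ is realized'' to ``$\Psi$ is realized'': ordinary compactness is unavailable because we are given agreement only of the \emph{positive existential} theories of $\bA$ and $\bB$ (not of their full first-order theories, and not saturation of $\bB$), so this is exactly where weak oligomorphy of $\bB$ must be used, via Lemma~\ref{lem_wo_real}. Minor bookkeeping — allowing a repetition-free enumeration of $\bC$, and allowing $\Psi$ to be infinite when the signature is infinite — is harmless, since Lemma~\ref{lem_wo_real} applies to arbitrary positive existential types.
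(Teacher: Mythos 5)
Your proof is correct and follows essentially the same route as the paper: fix a finite $\bC\le\bA$, realize the positive type of its enumerating tuple in $\bB$ by combining agreement of the positive existential theories (for finite subsets) with Lemma~\ref{lem_wo_real} (for the whole type), and read off a homomorphism. The only cosmetic difference is that you work with the quantifier-free part $\pTpZ_\bA(\bar a)$ while the paper realizes the full type $\pTp_\bA(\bar c)$; either suffices.
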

\begin{proof}
	Let $\bC$ be a finite substructure of $\bA$, and let  $C=\{c_1,\dots,c_n\}$ be its carrier. Define  $\bar{c}:=(c_1,\dots,c_n)$. Then every finite subset of  $\pTp_\bA(\bar{c})$ is realized in $\bB$. Since $\bB$ is weakly oligomorphic, by Lemma~\ref{lem_wo_real}, we get that there is a tuple $\bar{d}\in B^n$ that realizes $\pTp_\bA(\bar{c})$. Let $D=\{d_1,\dots,d_n\}$, and let $\bD$ be the substructure of $\bB$ induced by $D$. Then the mapping $f:\bC\to\bD$ given by $c_i\mapsto d_i$ is a homomorphism. This shows that $\Age(\bA)\to\Age(\bB)$
\end{proof}

\begin{proof}[Proof of Theorem~\ref{woRN}]
	($1\Rightarrow 4$) Clear.
	
	($4\Rightarrow 2$) It is well known (and easy to see) that for every positive primitive proposition $\varphi$ there exists a finite relational structure $\bA_\varphi$ such that for any relational structure $\bC$ of the given type, we have $\bC\models\varphi$ if and only if $\bA_\varphi\to\bC$. Thus from $\CSP(\bA)=\CSP(\bB)$ it follows that $\bA$ and $\bB$ have the same positive primitive theory. However, this is the case if and only if $\bA$ and $\bB$ have the same positive existential theory.
	
	($2\Rightarrow 3$) This is a direct consequence of Lemma~\ref{lem:wo_age}.
	
	($3\Rightarrow 1$) This follows from Proposition~\ref{prop:wo-hom}.
\end{proof}
Let us recall a result from \cite{MasPec11}:
\begin{proposition}[{\cite[Th.3.5]{MasPec11}}]
	Let $\bA$ be a countable weakly oligomorphic structure, and let $\bB$ be a countable relational structure such that $\bB\models\Th(\bA)$. Then $\bB$ is weakly oligomorphic, too.\qed
\end{proposition}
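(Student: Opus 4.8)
The plan is to recast weak oligomorphy as a property of the complete first-order theory, so that it transfers automatically along elementary equivalence. The first step is to establish a \emph{single-formula characterization}: a relational structure $\bC$ (over the given signature) is weakly oligomorphic if and only if for every $n$ the set of $n$-ary positive existential formulae (with free variables among $x_1,\dots,x_n$) has only finitely many classes under the equivalence $\varphi\sim_\bC\psi:\Longleftrightarrow\varphi^\bC=\psi^\bC$. One implication is immediate, since the relations $\varphi^\bC$ for single positive existential $\varphi$ are among all relations definable by positive existential types. For the converse, let $\Phi=\{\varphi_1,\varphi_2,\dots\}$ be a positive existential type and set $\psi_m:=\varphi_1\wedge\dots\wedge\varphi_m$; after renaming bound variables each $\psi_m$ is again positive existential, and the relations $\psi_m^\bC$ form a descending chain. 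Since, by assumption, there are only finitely many relations of the form $\psi^\bC$, this chain stabilizes, say $\psi_m^\bC=\psi_{m_0}^\bC$ for all $m\ge m_0$; hence $\Phi^\bC=\bigcap_m\psi_m^\bC=\psi_{m_0}^\bC$, so every relation definable by a positive existential type is already defined by a single positive existential formula. Consequently the number of relations definable by positive existential types is at most the number of $\sim_\bC$-classes, which is finite.

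The second step is the observation that the equivalence relation $\sim_\bC$ depends only on $\Th(\bC)$. For positive existential $\varphi$ and $\psi$ we have $\varphi^\bC=\psi^\bC$ if and only if $\bC\models\forall\bar x\,(\varphi(\bar x)\leftrightarrow\psi(\bar x))$, and the latter is an ordinary first-order sentence. Since $\Th(\bA)$ is a complete theory and $\bB\models\Th(\bA)$, the structures $\bA$ and $\bB$ satisfy exactly the same first-order sentences; therefore $\sim_\bA$ and $\sim_\bB$ coincide, as equivalence relations on the (signature-determined) set of $n$-ary positive existential formulae, for every $n$. In particular they have the same number of classes.

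Combining the two steps finishes the argument: $\bA$ is weakly oligomorphic, so by the single-formula characterization $\sim_\bA$ has finitely many classes in each arity; since $\sim_\bB=\sim_\bA$, the same holds for $\sim_\bB$; and applying the single-formula characterization once more shows that $\bB$ is weakly oligomorphic. The only genuine work lies in the single-formula characterization, and more precisely in the stabilization of descending chains of positive existentially definable relations; once that is in place, the proposition is a one-line transfer of a first-order property along elementary equivalence, and it is worth noting that the countability hypotheses on $\bA$ and $\bB$ play no role in this particular argument.
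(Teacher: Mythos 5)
Your argument is correct, and a preliminary remark is in order: the paper does not prove this proposition at all --- it is quoted from \cite{MasPec11} and stamped as known --- so there is no in-paper proof to compare you against. Your route is a clean and self-contained one: reduce weak oligomorphy to finiteness of the number of classes of single $n$-ary positive existential formulae modulo $\varphi^\bC=\psi^\bC$, note that this equivalence is recorded by the first-order sentences $\forall\bar x\,(\varphi\leftrightarrow\psi)$ and hence is an invariant of the complete theory, and transfer it along $\bB\models\Th(\bA)$. The real content is your stabilization lemma (every positive existential type is, in a structure with only finitely many positive-existentially definable relations per arity, equivalent to a single positive existential formula), and it is worth observing that this is precisely the trick the authors themselves use later in Lemma~\ref{lem_wo_real} to show that finitely realizable positive existential types are realized; so your argument is very much in the spirit of the paper's toolkit even though the characterization is not stated there. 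Two minor points. First, writing $\Phi=\{\varphi_1,\varphi_2,\dots\}$ tacitly assumes the type is countable, which need not hold since the paper explicitly allows signatures of arbitrary cardinality; the repair is immediate --- the relations defined by finite conjunctions of members of $\Phi$ form a finite family closed under intersection, hence have a least element, which equals $\Phi^\bC$ --- but it should be said. Second, your closing observation is accurate: countability of $\bA$ and $\bB$ plays no role, so your proof in fact establishes a slightly more general statement than the one quoted.
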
 
We can combine this with Theorem~\ref{woRN} to obtain:
\begin{corollary}
	Let $T$ be the complete first order theory of a weakly oligomorphic structure. Then all countable models of $T$ are homomorphism-equivalent.\qed
\end{corollary}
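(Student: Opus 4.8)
The plan is to derive the corollary as an immediate consequence of Theorem~\ref{woRN} together with the preservation result \cite[Th.3.5]{MasPec11} quoted just above. Write $T=\Th(\bA)$, where $\bA$ is the given weakly oligomorphic structure; passing to a countable elementary substructure if necessary, we may and do assume $\bA$ is countable (an elementary substructure realizes exactly the same positive existential types, and the positive existentially definable relations of the substructure are precisely the traces of those of $\bA$, so it is again weakly oligomorphic). Fix two arbitrary countable models $\bB$ and $\bC$ of $T$.

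The first step is to observe that $\bB$ and $\bC$ are themselves weakly oligomorphic. Indeed, $\bB\models\Th(\bA)$ and $\bA$ is countable and weakly oligomorphic, so by \cite[Th.3.5]{MasPec11} the structure $\bB$ is weakly oligomorphic; the identical argument applies to $\bC$.

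The second step is to feed this into Theorem~\ref{woRN}. Since $T$ is complete and both $\bB$ and $\bC$ model $T$, they have the same first order theory, hence in particular the same positive existential theory. Applying the implication $(2)\Rightarrow(1)$ of Theorem~\ref{woRN} to the pair of countable weakly oligomorphic structures $\bB,\bC$ yields that $\bB$ and $\bC$ are homomorphism-equivalent. As $\bB$ and $\bC$ were arbitrary countable models of $T$, and homomorphism-equivalence is symmetric and transitive, all countable models of $T$ are pairwise homomorphism-equivalent.

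There is really no hard step here: all the substance has been placed upstream, in Theorem~\ref{woRN} (whose proof in turn rests on Proposition~\ref{prop:wo-hom} and Lemma~\ref{lem:wo_age}) and in the preservation theorem of \cite{MasPec11}. The only point that calls for a moment's care is the reduction to a countable $\bA$ at the outset, so that the cited preservation result---stated for countable weakly oligomorphic structures---applies; this is handled by downward L\"owenheim--Skolem together with the observation that weak oligomorphy passes to elementary substructures.
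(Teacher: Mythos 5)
Your proposal is correct and follows exactly the route the paper intends: the corollary is stated with \qed precisely because it is the immediate combination of the quoted preservation result from \cite{MasPec11} (so all countable models of $T$ are weakly oligomorphic) with the implication $(2)\Rightarrow(1)$ of Theorem~\ref{woRN} (completeness of $T$ gives them the same positive existential theory, hence homomorphism-equivalence). Your extra L\"owenheim--Skolem reduction to a countable $\bA$ is a sensible piece of care that the paper silently omits; it is harmless, though strictly it presupposes a countable signature, a hypothesis the paper does not impose but which is clearly the intended setting here.
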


\section*{Concluding remarks}
A first order theory, for which all countable models are homomorphism-equivalent can rightfully be called \emph{weakly $\omega$-categorical}. So we can say that the first order theory of a weakly oligomorphic structure always is weakly $\omega$-categorical. The reverse implication is likely not to hold. However, so far we are not aware of a counter-example. Therefore we formulate the following problem:
\begin{problem}
	What are the countable (relational) structures whose first order theory is weakly $\omega$-categorical?
\end{problem}

In \cite{MarBodHil09}, Martin, Bodirsky, and Hils, among other things, characterized all countable structures whose positive existential theory coincides with the positive existential theory of an $\omega$-categorical structure. By Theorem~\ref{coreofwo} weakly oligomorphic structures fall under  their classification. In view of Theorem~\ref{woRN}, it should be possible to use the result from \cite{MarBodHil09} in the proof of Theorem~\ref{coreofwo}. However, we decided against this way, since our elementary techniques give a better idea of the $\omega$-categorical structure inside of a weakly oligomorphic structure. Indeed, in many cases (in particular for homomorphism homogeneous structures) our technique does not only yield the existence of an $\omega$-categorical substructure but a concrete description (cf. Example~\ref{ex2}).

\end{document}